\newcommand{\Lie}	{\operatorname{Lie}}
\newcommand{\Hom}	{\operatorname{Hom}}
\newcommand{\Ker}	{\operatorname{Ker}}
\newcommand{\Ind}	{\operatorname{Ind}}
\newcommand{\Ad}	{\operatorname{Ad}}
\newcommand{\ang}[2]{\langle #1,#2\rangle}
\newcommand{\scirc}	{\raise2pt\hbox{${}_\circ$}}
\newcommand{\id}	{\operatorname{id}}
\newcommand{\Lrep}   {{\widehat{L}_{\operatorname{f}}}} 
\newcommand{\rhoc}   {{\rho_{\mathfrak{t}}}}
\newcommand{\rhon}   {{\rho_{\mathfrak{n}}}}
\newcommand{\rhonbar}{{\rho_{\bar{\mathfrak{n}}}}}
\newcommand{\ipgz}  {I_P^G(\zeta)}
\newcommand{\vzp}  {V_{\zeta,P}}
\newcommand{\vzdp}  {V_{\zeta^{\ast},P}}
\newcommand{\vzzp}  {\vzp \boxtimes V_{\zeta^*, \overline{P}}}
\newtheorem{thmalph}{Theorem}
\newtheorem{coralph}[thmalph]{Corollary}
\newtheorem{theorem}{Theorem}[section]
\newtheorem{proposition}[theorem]{Proposition}
\newtheorem{corollary}[theorem]{Corollary}
\newtheorem{lemma}[theorem]{Lemma}
\theoremstyle{remark}
\newtheorem{remark}[theorem]{Remark}
\theoremstyle{definition}
\newtheorem{example}[theorem]{Example}
\newtheorem{definition}[theorem]{Definition}
\providecommand{\bysame}{\makebox[3em]{\hrulefill}\thinspace}
\numberwithin{equation}{section}
\begin{document}
\title{Finite multiplicity theorems
\\ for induction and restriction
}
\author{%
 Toshiyuki Kobayashi
 \hspace{0.3mm} and Toshio Oshima
}
\date{} 

\maketitle

\begin{abstract}
We find upper and lower bounds of the multiplicities of
irreducible admissible representations $\pi$
 of  semisimple Lie groups $G$ occurring in the
induced representations $\operatorname{Ind}_H^G\tau$ from 
irreducible representations $\tau$
 of closed subgroups $H$.  
As corollaries, we establish 
geometric criteria for finiteness of 
the dimension of $\Hom_G(\pi,\Ind_H^G \tau)$
(induction)
 and of $\Hom_H(\pi|_H,\tau)$ (restriction)
 by means of the real flag variety $G/P$,
 and
discover that uniform boundedness property of these multiplicities
is independent of real forms and characterized
 by means of the complex flag variety.
\end{abstract}

\noindent
\textit{Keywords and phrases:}
real reductive group, 
admissible representation, 
multiplicity,
hyperfunction,
 unitary representation,
 spherical variety,
symmetric spaces.  

\medskip
\noindent
\textit{2010 MSC:}
Primary
22E46; 
Secondary
14M17,  
32A45, 
35A27, 
53C30.

\section{Introduction}
\label{sec:1}

The motivation of this work is the following fundamental
questions in non-commutative harmonic analysis beyond symmetric spaces
and branching problems of infinite-dimensional
representations of real reductive Lie groups:

1. (Induction)  What is the `most general setting'
 of homogeneous spaces $G/H$ in which we could expect reasonable
 and detailed analysis of function spaces on $G/H$?

2. (Restriction) What is the  `most general setting'
of pairs $(G, H)$ for which we could expect
reasonable and detailed analysis of branching laws
 of the restriction of (arbitrary) irreducible representations of $G$ to $H$?

We shall give an answer to these questions from the viewpoint of
 multiplicities of irreducible representations.

\vskip 1pc

Let $G$ be a connected real semisimple Lie group with finite center, 
and $H$ a closed (not necessarily, reductive) subgroup 
 with at most finitely many connected components.
(It is easy to see
 that the results of this article remain true
 if we replace connected semisimple Lie groups $G$
 by linear reductive groups.)

We consider the following two geometric conditions:
\begin{align}
&\text{There exists an open $H$-orbit on the real flag variety $G/P$.}
\label{eqn:HP}
\tag{HP}
\\
&\text{There exists an open $H_c$-orbit on the complex flag variety $G_c/B$.  
}
\label{eqn:HB}
\tag{HB}
\end{align}
Here $P$ is a minimal parabolic subgroup of $G$,
$B$ is a Borel subgroup of a complex Lie group $G_c$
 with the complexified Lie algebra 
${\mathfrak g}_c= {\mathfrak{g}} \otimes_{\mathbb{R}}{\mathbb{C}}$, 
 and $H_c$ a complex subgroup with Lie algebra 
 ${\mathfrak{h}}_c={\mathfrak{h}} \otimes_{\mathbb{R}}{\mathbb{C}}$,
 where ${\mathfrak {g}}$ and ${\mathfrak {h}}$
 are the Lie algebras of $G$ and $H$, 
respectively. 
The condition \eqref{eqn:HB} is equivalent to
 that $G_c/H_c$ is {\it{spherical}}
 (i.e. $B$ has an open orbit on $G_c/H_c$)
 when $G \supset H$ are defined algebraically.  
Similarly, we call $G/H$ is {\it {real spherical}} \cite{Kb5}
 if \eqref{eqn:HP} is satisfied (i.e. $P$ has an open orbit on $G/H$),
 see Remark \ref{rem:2.4} (4)
 for equivalent definitions.

An analogous notation $P_H \subset H$
 and $B_H \subset H_c$
will be applied when $H$ is reductive.  
In this case we can consider also the following two conditions:
\begin{align}
&\text{There exists an open $P_H$-orbit on the real flag variety $G/P$.}
\label{eqn:PP}
\tag{PP}
\\
&\text{There exists an open $B_H$-orbit on the complex flag variety $G_c/B$.  
}
\label{eqn:BB}
\tag{BB}
\end{align}
Clearly,
 these four conditions 
 on the pair $(G,H)$ do not depend on the choice of parabolics, 
 coverings or connectedness of the groups,
 but are determined locally,
 namely, 
 only
 by the Lie algebras ${\mathfrak {g}}$
 and ${\mathfrak {h}}$.   
An easy argument (see Lemma \ref{lem:HPB}) 
 shows that the following implications hold.  
Here we consider \eqref{eqn:PP} and \eqref{eqn:BB}
 when $H$ is reductive:
\begin{alignat*}{5}
& && &&\eqref{eqn:HB} && &&
\\
& &&\rotatebox{45}{$\Longrightarrow$}  && && \rotatebox{135}{$\Longleftarrow$}  &&
\\
&\eqref{eqn:BB} && && && && \eqref{eqn:HP}
\\
& && \rotatebox{135}{$\Longleftarrow$} && && \,\,\rotatebox{45}{$\Longrightarrow$} &&
\\
& && &&\eqref{eqn:PP}&& &&
\end{alignat*}
None of the converse implications is true:  
\begin{example}
[{\cite[Example 2.8.6]{Kb5}}]
Let $(G,H)$ be a triple product pair
$(
{}^{\backprime\!} G\times {}^{\backprime\!} G \times {}^{\backprime\!} G,
\Delta {}^{\backprime\!} G)$
 with ${}^{\backprime\!}G$ being a simple Lie group.  
In \cite{Kb5}
 we gave the following classification:
 \eqref{eqn:HP} holds
 if and only if ${}^{\backprime\!}G$ is compact
 or 
$
     {}^{\backprime\!}{\mathfrak {g}} \simeq {\mathfrak {so}}(n,1),
$
 \eqref{eqn:HB} holds
 if and only if 
$
      {}^{\backprime\!}{\mathfrak {g}} \simeq {\mathfrak {su}}(2), 
$ 
$
     {\mathfrak {sl}}(2,{\mathbb{R}}), 
$
 or 
$
     {\mathfrak {sl}}(2,{\mathbb{C}}); 
$
 \eqref{eqn:PP} holds
 if and only if ${}^{\backprime\!}G$ is compact;
 \eqref{eqn:BB} never holds.  
The condition \eqref{eqn:HP}
 in the triple product pair has laid a solid foundation
 of concrete analysis of the tensor product 
 of two representations
 (see \cite{BeR}
 for ${}^{\backprime\!}G=SL(2,{\mathbb{R}})$;
\cite{CKOP} for ${}^{\backprime\!} G=SO(n,1)$, 
 for instance).    
\end{example}
It should be noted 
 that the two conditions \eqref{eqn:HB} and \eqref{eqn:BB} depend 
only on the complexifications $({\mathfrak {g}}_c, {\mathfrak {h}}_c)$.  
It is known by the work of Brion, Kr\"amer, and
 Vinberg--Kimelfeld
 \cite{Br, Kr2, Kr3,VK}
 that the geometric condition 
 \eqref{eqn:HB} characterizes the multiplicity-free
 property
 of irreducible (algebraic)
 {\it{finite dimensional}} representations 
 $\pi$
 in the induced representation $\Ind_H^G \tau$ 
 with $\dim \tau =1$
 (i.e. $(G,H)$ is a {\it{Gelfand pair}}),  
 and that the condition \eqref{eqn:BB} 
 characterizes the multiplicity-free property of the restriction $\pi|_H$ 
 with respect to $G \downarrow H$
 (i.e. $(G,H)$ is a {\it{strong Gelfand pair}}).  
An extensive research
 has been made in the decades
 in connection with algebraic group actions,
 invariant theory,
 and symplectic geometry
 among others
 (e.g. \cite{Vi}), 
 but mostly in the framework of algebraic 
 (finite dimensional) representations
 or in the case of compact subgroups $H$.

These beautiful classic results 
 may play a guiding principle
 in considering what a natural generalization
 would be for non-compact subgroups $H$
 (or for non-Riemannian homogeneous spaces $G/H$), 
 however,
 only a complete change of machinery 
 has enabled us 
 to prove finite/bounded
 multiplicity results
 for admissible representations.  
Namely,
 in order to overcome analytic difficulties
 arising from non-compact subgroups $H$
 and from infinite dimensional representations, 
 we employ the theory of a system
 of partial differential equations
 with regular singularities,
 for which micro-local analysis
 gives a canonical method.  
Thus we establish in this paper 
 that the above four geometric conditions 
 \eqref{eqn:HP}, \eqref{eqn:HB}, \eqref{eqn:PP}, 
 and \eqref{eqn:BB} characterize 
 finiteness$/$boundedness of the multiplicities
 of the induction$/$restriction
 for admissible representations
 of real reductive groups, 
 respectively
 (see Theorems \ref{thm:A}--\ref{thm:D} below).  

For a precise statement of our results,
 let $\widehat G_{\operatorname{ad}}$ denote 
 the set of equivalence classes
 of irreducible admissible 
 representations of $G$
 (see Definition \ref{def:adm}), 
 and $\widehat{G}_{\operatorname{f}}$
 that of irreducible finite dimensional representations
 of $G$.
We write $c_{\mathfrak{g},K}(\pi,\Ind_H^G\tau)$ 
 for the multiplicity
 of the underlying $(\mathfrak{g},K)$-module $\pi_K$ of $\pi \in \widehat G_{\operatorname{ad}}$
 occurring in the space of 
 sections of the $G$-homogeneous vector bundle over $G/H$
 associated to $\tau\in \widehat H_{\operatorname{f}}$
 (the topology of $\Ind_H^G \tau$ is not the main issue here
 owing to analytic elliptic regularity).  

\begin{thmalph} 
[finite multiplicity theorem for induction] 
\label{thm:A}
~
\par
\noindent
{\rm 1)}
If \eqref{eqn:HP} holds, 
then
$
  c_{\mathfrak{g},K}(\pi,\Ind_H^G\tau)<\infty
$
 for any $\pi \in \widehat{G}_{\operatorname{ad}}$ and any $\tau \in \widehat H_{\operatorname{f}}$.  

\noindent
{\rm 2)} 
Suppose that $G$, $H$
 and $\tau$ are defined algebraically over ${\mathbb{R}}$. 
If \eqref{eqn:HP} fails,  
then for  any algebraic representation $\tau$ of $H$
 there exists $\pi\in\widehat G_{\operatorname{ad}}$ such that
$
  c_{\mathfrak{g},K}(\pi,\Ind_H^G\tau)=\infty
$.
\end{thmalph}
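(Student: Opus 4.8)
The plan is to prove Theorem~\ref{thm:A} by relating the multiplicity $c_{\mathfrak{g},K}(\pi,\Ind_H^G\tau)$ to the dimension of a space of boundary-value data on the real flag variety $G/P$, and then using the theory of systems of differential equations with regular singularities to control that space. First I would reduce to the case $\tau$ finite dimensional (already assumed) and realize $\pi$ as a subrepresentation of a (not necessarily minimal) principal series $\Ind_P^G(\xi)$ via the Casselman embedding theorem; dually, $\pi$ embeds into a space of hyperfunction sections of a vector bundle on $G/P$ annihilated by the (maximal) ideal $J_\lambda$ of the center $Z(\mathfrak g_c)$ fixing the infinitesimal character of $\pi$. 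By Frobenius reciprocity and duality, $\Hom_{\mathfrak g,K}(\pi_K,\Ind_H^G\tau)$ is controlled by $H$-equivariant maps, hence by $H$-invariant distribution (hyperfunction) sections of an associated bundle $\mathcal V$ on $G/P$ satisfying the holonomic system coming from $J_\lambda$. So the core claim becomes: if $H$ has an open orbit on $G/P$, then the space of such $H$-invariant hyperfunction solutions is finite dimensional.

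\medskip

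The key steps for part (1), in order, are: (i) set up the identification of $c_{\mathfrak g,K}(\pi,\Ind_H^G\tau)$ with $\dim \mathcal{D}'(G/P,\mathcal V;\mathcal M)^H$, where $\mathcal M$ is the $\mathcal D$-module on $G/P$ generated by $J_\lambda$ acting on the (twisted) structure sheaf — this is a regular holonomic system whose characteristic variety sits inside the (co)normal directions, in fact inside $T^*_{G/P}(G/P)$ together with the union of conormals to $P$-orbits (a statement about the Harish-Chandra system / Beilinson--Bernstein localization); (ii) stratify $G/P$ by $H$-orbits; the open $H$-orbit $\mathcal O$ contributes a finite-dimensional space because on $\mathcal O$ the equation is a holonomic system on a homogeneous space $H/(H\cap P^g)$ with a single open stratum, and $H$-invariant hyperfunction sections supported there form a finite-dimensional space (essentially the fiber multiplicity of $\tau$ against the isotropy representation, a finite number); (iii) propagate along the stratification: for each lower-dimensional $H$-orbit, the local obstruction to extending the dimension count is governed by the \emph{regularity} of the system transverse to that stratum, and micro-local analysis (Kashiwara--Kawai, the theory of regular singularities á la \cite{KashiwaraOshima} / boundary value maps) shows each stratum contributes only finitely much. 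Summing over the finitely many $H$-orbits (finiteness of the number of orbits is not needed in full; one only needs the relevant ones near the support, or one can invoke that a real spherical homogeneous space has finitely many $P$-orbits hence $G/P$ has finitely many $H$-orbits) gives the finite bound.

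\medskip

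The main obstacle — and the technical heart of the paper — is step~(iii): showing that the transversal system to each $H$-orbit has regular singularities in the sense needed, so that the hyperfunction boundary values are finite dimensional and the ``transport'' of solutions across strata does not blow up. This requires checking that the holonomic system defined by $J_\lambda$, restricted near an $H$-orbit closure, fits the framework of systems with regular singularities along a normally crossing divisor (after possibly resolving the geometry of the orbit closures), so that the classical finiteness of the space of multi-valued solutions / of the $b$-function estimates apply. Concretely one must produce, for a generic normal slice to each orbit, a fundamental system of estimates on the indicial equation; the openness hypothesis \eqref{eqn:HP} enters precisely in guaranteeing the top stratum is a point-count rather than a continuum, and transversal regularity then does the rest.

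\medskip

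For part (2), the plan is the reverse construction: assuming \eqref{eqn:HP} fails, every $H$-orbit on $G/P$ has positive codimension, so the $H$-orbit space is ``large''; I would build a representation $\pi$ with infinite multiplicity by choosing the infinitesimal character (equivalently the principal series parameter $\xi$) so that the transversal indicial equation to a maximal-dimensional $H$-orbit $\mathcal O \subset G/P$ has a non-resonant (or resonant, as convenient) exponent, making the space of $H$-invariant hyperfunction sections of the bundle associated to $\tau$ infinite dimensional — intuitively, $H$-invariant hyperfunctions on $G/P$ that are ``arbitrary'' along the positive-dimensional normal directions to $\mathcal O$, cut out by the holonomic system, still form an infinite-dimensional space because the system imposes only finitely many conditions per point while $\mathcal O$ has positive codimension. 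Taking $\pi$ to be an irreducible subquotient of $\Ind_P^G(\xi)$ into which these solutions inject (using that for generic $\xi$ the principal series is irreducible, or choosing a suitable composition factor and tracking the pairing) yields $c_{\mathfrak g,K}(\pi,\Ind_H^G\tau)=\infty$. The algebraicity hypothesis is used here to ensure $G/P$ has finitely many $H$-orbits and that the failure of \eqref{eqn:HP} is detected by a single orbit of maximal dimension whose normal geometry is tractable; the hard part is arranging the parameter $\xi$ (depending only on $\mathfrak g$, not on $\tau$) uniformly so that the construction works for \emph{every} algebraic $\tau$ simultaneously, which one handles by noting that the transversal indicial roots depend only on $\xi$ and the restriction of the system, not on the fiberwise data $\tau$.
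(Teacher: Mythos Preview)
Your Part~1 strategy points in the opposite direction from the paper's and, as written, has a gap in step~(i). You propose to realize $\pi$ in a principal series and then bound $H$-invariant hyperfunction sections on $G/P$ cut out by ``the holonomic system from $J_\lambda$''. But once you sit inside a fixed $I_P^G(\xi)$, the center $Z(\mathfrak g)$ already acts by the scalar $\chi_\lambda$, so $J_\lambda$ imposes no equation on $G/P$; your $\mathcal D$-module constraint is vacuous there, and the contribution of non-open $H$-orbits is then governed by transverse-derivative (Bruhat-type) analysis, not by regular singularities of a holonomic system. The paper instead embeds the \emph{target}: it views $(\Ind_H^G\tau)_\lambda$ inside $\mathcal B(G;\chi_\lambda)$, regards $G$ as the symmetric space $(G\times G)/\Delta G$, and applies the boundary-value maps of \cite{O2} toward the compactification of $G$. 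This yields an injective $(\mathfrak g,K)$-map from the associated graded of $\mathcal B(G/H;\tau,\chi_\lambda)_K$ into a finite sum $\bigoplus_\zeta I_P^G(\zeta)\otimes\Hom_{H\cap\bar P}(V_\zeta,V_{\tau,\bar P})$, using only the \emph{single} open orbit $H\bar P\subset G$; no stratification over all $H$-orbits on $G/P$ is needed. Finiteness of $c_{\mathfrak g,K}(\pi,\Ind_H^G\tau)$ then follows from the finiteness of $c_{\mathfrak g,K}(\pi,I_P^G(\zeta))$ and of the finite-dimensional $\Hom_{H\cap\bar P}$ factors (Theorem~\ref{thm:2.1}).

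Your Part~2 plan has a more serious gap and a factual error. You say algebraicity ``is used to ensure $G/P$ has finitely many $H$-orbits''; the opposite is the point. When \eqref{eqn:HP} fails there is no open orbit, and algebraicity (Lemma~\ref{lem:algH}) is invoked precisely to guarantee \emph{infinitely many} disjoint $H$-invariant open subsets $U_1,U_2,\ldots\subset G/P$. The paper's construction (Theorem~\ref{thm:3.1}, Lemma~\ref{lem:3.2}) is then explicit and involves no indicial equations or normal slices: given $\tau\in\widehat H_{\operatorname{f}}$, pick $\sigma\in\widehat G_{\operatorname{f}}$ with $c_H(\sigma,\tau)>0$ (Frobenius reciprocity, using that $\tau$ is algebraic), embed $\sigma^*\hookrightarrow I_P^G(\zeta_\sigma^*)$, take $H$-fixed vectors $u_j\in V_\tau\otimes V_\sigma^*$, and multiply by the characteristic functions $\chi_i$ of the $U_i$. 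The products $\chi_i u_j$ are linearly independent in $(V_\tau\otimes\mathcal B(G/P;V_{\zeta_\sigma^*,P}))^H$, and the generalized Poisson transform sends this space injectively into $\Hom_G(\mathcal A(G/P;V_{\zeta_\sigma,P}),\Ind_H^G\tau)$. Hence $c_{\mathfrak g,K}(I_P^G(\zeta_\sigma),\Ind_H^G\tau)=\infty$, and some irreducible subquotient $\pi$ of $I_P^G(\zeta_\sigma)$ inherits infinite multiplicity. Your proposed ``hyperfunctions arbitrary along the normal directions'' does not give a well-defined intertwining map into $\Ind_H^G\tau$, and your attempt to choose $\xi$ independently of $\tau$ is exactly what fails: the paper's $\zeta_\sigma$ depends on $\tau$ through the auxiliary finite-dimensional $\sigma$.
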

An upper bound formula
 of the multiplicities is presented in\/ {\rm Theorem~\ref{thm:2.1}}, 
which is strong enough
 to give a proof of uniformly bounded
 multiplicity results
 under stronger assumptions
  (Theorems \ref{thm:B} and \ref{thm:D} below), 
 and thus plays a central role throughout the paper.
The algebraic assumption
 in the second statement of Theorem \ref{thm:A}
 is crucial.  
A counterexample without the algebraic assumption
 is illustrated in Example \ref{ex:alg}.

Concerning the uniform boundedness 
 of the multiplicities for the induced representation,
 we may consider the following three kinds
 of conditions:
\begin{align}
 \underset{\tau \in \widehat{H}_{\operatorname{f}}}\sup \,
 \underset{\pi \in \widehat{G}_{\operatorname{ad}}}\sup \,
 \frac{1}{\dim\tau} {c_{\mathfrak{g},K}(\pi,\Ind_H^G\tau)}
 &< \infty.  
\label{eqn:HG1}
\\
 \sup\limits_{\substack{\tau\in\widehat{H}_{\operatorname{f}}\\ \dim\tau=1}}
 \underset{\pi \in \widehat{G}_{\operatorname{ad}}}\sup \,
 c_{\mathfrak{g},K}(\pi,\Ind_H^G\tau)&<\infty.  
\label{eqn:HG2}
\\
  \underset{\pi \in \widehat{G}_{\operatorname{ad}}}\sup
 c_{\mathfrak{g},K}(\pi,C^{\infty}(G/H)) &< \infty.  
\label{eqn:HG3}
\end{align}

Clearly,
\eqref{eqn:HG1} $\Rightarrow$ 
\eqref{eqn:HG2} $\Rightarrow$
\eqref{eqn:HG3}.

Needless to say, 
$\widehat{G}_{\operatorname{ad}}$ and $\widehat{H}_{\operatorname{ad}}$ depend
heavily on real forms $(G,H)$ of $(G_c,H_c)$.  
Surprisingly,
 we discovered 
 in the following theorem 
 (and also Theorem \ref{thm:D})
 that 
 the uniform boundedness condition of the multiplicities is
determined only by the complexified Lie algebras
$(\mathfrak{g}_c,\mathfrak{h}_c)$.  

\begin{thmalph}
[uniformly bounded theorem of multiplicities for induction]
\label{thm:B}
~
\par\noindent
{\rm{1)}}\enspace
The condition \eqref{eqn:HB} implies
 \eqref{eqn:HG1} 
 $($hence, \eqref{eqn:HG2} and  \eqref{eqn:HG3}, too$)$.  
\par\noindent
{\rm{2)}}\enspace
Suppose $(G,H)$ is defined algebraically over ${\mathbb{R}}$.  
Then the conditions \eqref{eqn:HB}, 
\eqref{eqn:HG1}, 
and \eqref{eqn:HG2},  
are all equivalent.  
Further, if $H$ is reductive, 
 then \eqref{eqn:HG3} 
 is equivalent to these conditions, too.  
\end{thmalph}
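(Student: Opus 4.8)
The plan is to read off assertion 1) from the upper bound of Theorem~\ref{thm:2.1}, and to establish the converse implications in assertion 2) by an algebraic argument on the complexification, descended to the real form. For 1), given $\pi\in\widehat G_{\operatorname{ad}}$ and $\tau\in\widehat H_{\operatorname{f}}$, I would first use Casselman's subrepresentation theorem to embed $\pi_K$ into $(\Ind_P^G\xi)_K$ for a minimal parabolic $P$ and a finite-dimensional representation $\xi$, and then invoke Theorem~\ref{thm:2.1} to get an estimate of the shape $c_{\mathfrak g,K}(\pi,\Ind_H^G\tau)\le(\dim\tau)\,m(G/H)$, where $m(G/H)$ is the geometric quantity there --- the maximal dimension of the space of local solutions, with regular singularities along the boundary, of the holonomic system attached to an open $H$-orbit in the real flag variety $G/P$ --- and, crucially, does not depend on the infinitesimal character or $K$-type of $\pi$, nor on $\tau,\xi$. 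Since \eqref{eqn:HB} implies \eqref{eqn:HP} (Lemma~\ref{lem:HPB}), such an open orbit exists, so $m(G/H)<\infty$; the remaining point is that under \eqref{eqn:HB} one in fact has $m(G/H)\le C(\mathfrak g_c,\mathfrak h_c)$ with a constant depending only on the complexified pair. For this I would compute the local solution space through the characteristic cycle of the system: its dimension is governed by the local multiplicities of the $B$-orbit closures in the complex flag variety $G_c/B$ through a generic boundary point, and sphericity of $G_c/H_c$ --- which is exactly \eqref{eqn:HB} --- forces these to be minimal, giving $m(G/H)\le 1$. Granting this, \eqref{eqn:HG1} follows with supremum $\le C(\mathfrak g_c,\mathfrak h_c)$, and then \eqref{eqn:HG2}, \eqref{eqn:HG3} are immediate.

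For 2), I assume throughout that $(G,H)$ is defined algebraically over $\mathbb R$. By 1) together with the trivial chain \eqref{eqn:HG1}$\Rightarrow$\eqref{eqn:HG2}$\Rightarrow$\eqref{eqn:HG3}, it suffices to prove \eqref{eqn:HG2}$\Rightarrow$\eqref{eqn:HB}, and, when $H$ is reductive, \eqref{eqn:HG3}$\Rightarrow$\eqref{eqn:HB}, both in contrapositive form. So suppose the algebraic variety $G_c/H_c$ is not spherical. When $H$ is reductive, $G_c/H_c$ is affine by Matsushima's criterion, and by the Vinberg--Kimelfeld theorem combined with complexity theory --- the $G_c$-isotypic multiplicities in $\mathbb C[G_c/H_c]$ are given by a quasi-polynomial of degree $c(G_c/H_c)\ge 1$ --- these multiplicities are unbounded. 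Restriction of regular functions to the Zariski-dense real form $G/H\subset G_c/H_c$ gives a $G$-equivariant injection $\mathbb C[G_c/H_c]\hookrightarrow C^\infty(G/H)$ with image consisting of $K$-finite vectors; since the finite-dimensional constituents $V_\lambda|_G$ lie in $\widehat G_{\operatorname{ad}}$, we get $\sup_\pi c_{\mathfrak g,K}(\pi,C^\infty(G/H))=\infty$, i.e.\ \eqref{eqn:HG3} (hence \eqref{eqn:HG2}) fails.

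In the general case I would pass to $G_c/H'_c$, where $H'\subseteq H$ is the common kernel of the algebraic characters of $H$ (a subgroup defined over $\mathbb R$, with no nontrivial characters, so that $G_c/H'_c$ is quasi-affine by Grosshans); the quotient map $G_c/H'_c\to G_c/H_c$ shows $G_c/H'_c$ is non-spherical, indeed non-spherical already as a $\bigl(G_c\times(H_c/H'_c)\bigr)$-variety. Hence $\mathbb C[G_c/H'_c]=\bigoplus_{\chi}H^0(G_c/H_c,\mathcal L_\chi)$, the decomposition into isotypic parts for the torus $H_c/H'_c$, has unbounded $\bigl(G_c\times(H_c/H'_c)\bigr)$-isotypic multiplicities by the same invariant-theoretic input. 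Since the relevant weight monoid is stable under the Galois action, I can choose the witnessing multiplicities along the ray spanned by a pair $(\lambda_0,\chi_0)$ with $\chi_0$ a \emph{real} character, so that $\tau:=\chi_0|_H$ is a genuine $1$-dimensional character of $H$; then for every $N$ there are $n\in\mathbb N$ and a dominant $\lambda$ with the multiplicity of $V_\lambda$ in $H^0(G_c/H_c,\mathcal L_{n\chi_0})$ exceeding $N$. Restricting such sections to $G/H$ embeds $H^0(G_c/H_c,\mathcal L_{n\chi_0})$ $G$-equivariantly into $\Ind_H^G(n\tau)$, and with $\pi=V_\lambda|_G\in\widehat G_{\operatorname{ad}}$ and $\dim(n\tau)=1$ this shows \eqref{eqn:HG2} fails.

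The main obstacle is the boundedness step in 1): showing that the quantity $m(G/H)$, defined on the real flag variety $G/P$ by regular-singularity and micro-local analysis, is controlled purely by the complex geometry of $G_c/H_c$. This is precisely where the independence of real forms enters, and it requires identifying the dimension of the local solution space with the $B$-orbit intersection data on $G_c/B$ and then exploiting sphericity; the rest, including the converse in 2), is comparatively routine given Theorem~\ref{thm:2.1} and the classical results of Vinberg--Kimelfeld and Brion, modulo the care with quasi-affineness and rationality of $\chi_0$ indicated above for non-reductive $H$.
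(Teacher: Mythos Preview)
Your plan for part 1) misreads Theorem~\ref{thm:2.1}. The bound there is not of the shape $(\dim\tau)\cdot m(G/H)$ for a single geometric number $m(G/H)$ independent of $\pi$ and $\tau$; it is
\[
c_{\mathfrak g,K}(\pi,\Ind_H^G\tau)\ \le\ \#\bigl(W(\mathfrak t)\backslash W(\mathfrak j;\lambda)\bigr)\sum_{\zeta\in\Lrep} c_{\mathfrak g,K}(\pi,\ipgz)\cdot c_{H\cap\bar P}(V_\zeta,V_{\tau,\bar P}),
\]
and both factors in the sum depend on $\pi$ (through $\lambda$ and the embeddings into principal series) and on $\tau$, respectively. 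The paper does not bound this via characteristic cycles or $B$-orbit multiplicities on $G_c/B$; it controls the two factors by two separate representation-theoretic inputs that you do not mention. First, Proposition~\ref{prop:4.2} (translation functors plus Vogan's minimal $K$-type estimate) gives a constant $N$ with $c_{\mathfrak g,K}(\pi,\ipgz)\le N$ for all $\pi,\zeta$; this has nothing to do with $H$. Second, and this is exactly where \eqref{eqn:HB} enters, Proposition~\ref{prop:4.1} shows $c_{H\cap P}(V_\zeta,V_{\tau,P})\le D\cdot\dim\tau$ uniformly in $\zeta$: from $\mathfrak g_c=\mathfrak h_c+\mathfrak b$ one gets $\mathfrak p_c=(\mathfrak h_c\cap\mathfrak p_c)+\mathfrak b$, so $(H\cap P)_c$ has an open orbit on the flag variety $\widetilde P_c/\widetilde B$ of the Levi, and in the Borel--Weil realization of $\zeta^*$ an $(\mathfrak h_c\cap\mathfrak p_c)$-invariant holomorphic section is determined by its restriction to that open orbit. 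Your ``local solution space governed by $B$-orbit closures'' does not capture either ingredient, and in particular the claim $m(G/H)\le 1$ under \eqref{eqn:HB} has no counterpart in the actual argument.

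For part 2) your contrapositive via $\mathbb C[G_c/H_c]\hookrightarrow C^\infty(G/H)$ is on the right track and morally the same reduction to Vinberg--Kimelfeld, but you make it harder than necessary. The paper uses the elementary lower bound of Example~\ref{ex:lowbd}\,1) (the case $m=1$ of Theorem~\ref{thm:3.1}), namely $c_{\mathfrak g,K}(I_P^G(\zeta_\sigma),\Ind_H^G\tau)\ge c_H(\sigma,\tau)$ for every $\sigma\in\widehat G_{\operatorname f}$; combined with the uniform length bound on principal series (again Proposition~\ref{prop:4.2}), \eqref{eqn:HG2} forces $\sup_{\dim\tau=1}\sup_\sigma c_H(\sigma,\tau)<\infty$ and \eqref{eqn:HG3} forces $\sup_\sigma c_H(\sigma,\mathbf 1)<\infty$ for reductive $H$. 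At that point Lemma~\ref{lem:HB} (the implications (iii)$'\Rightarrow$(i) and (iv)$'\Rightarrow$(i) of Vinberg--Kimelfeld) finishes the job for arbitrary algebraic $H$, with no need for your detour through $H'$, quasi-affineness, or the rationality argument for $\chi_0$.
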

\begin{remark}
\label{rem:A}
Theorem \ref{thm:B} is classically known 
 for compact Lie group $G$
 even in a stronger form
 \cite{Br, Kr3},
 i.e.
 the upper bound \eqref{eqn:HG3}
 is one. 
In contrast to the compact case, 
 the upper bound \eqref{eqn:HG3}
 is often greater than one
 if $H$ is noncompact.  
For instance,
 if $(G,H)$ is a semisimple symmetric pair
 $(SL(p+q,{\mathbb{R}}), SO_0(p,q))$,
 then the upper bound \eqref{eqn:HG3}
 is no less than $(p+q)!\,/\,p!\, q!$ 
 in view of the contribution
 of the most continuous principal series
 representations for $G/H$
 (see \cite{OS}).  
\end{remark}
\begin{remark}
\label{rem:HBequiv}
It is known that the condition \eqref{eqn:HB}
 is equivalent to the commutativity
 of the ring of $G$-invariant differential operators on $G/H$.  
Further, if $H$ is compact
 then the condition \eqref{eqn:HB} is equivalent
 to that the Riemannian manifold $G/H$ is a weakly symmetric space
in the sense of Selberg.  
\end{remark}
\begin{example}
\label{ex:AB}
{\rm{1)}}\enspace
If $(G,H)$ is a symmetric pair,
 then the condition \eqref{eqn:HB}
 (and therefore \eqref{eqn:HP}) is always  
 fulfilled.  
In particular,
 the uniform bounded estimate \eqref{eqn:HG1}
 holds 
 by Theorem \ref{thm:B}.  
This improves an earlier work of van den Ban \cite {Ba}: 
\begin{equation}
\label{eqn:ban}
    c_{{\mathfrak {g}},K}(\pi,\Ind_H^G \tau)
    <\infty
    \quad
    \text{for any $\pi \in \widehat G_{\operatorname{ad}}$
 and $\tau \in\widehat H_{\operatorname{f}}$, }
\end{equation}
which does not imply the uniform estimate
 with respect to $\pi$.  
In our context, 
 the weaker estimate \eqref{eqn:ban} is derived from a more general geometric
 condition \eqref{eqn:HP}
 by Theorem \ref{thm:A}.  
\par\noindent
{\rm{2)}}\enspace
If $G_c/H_c$ is  spherical
 then any real form $(G,H)$ satisfies \eqref{eqn:HB}.  
There are some few non-symmetric 
 spherical homogeneous spaces $G_c/H_c$ 
  such as 
$
     ({\mathfrak {g}}_c,{\mathfrak {h}}_c)
= ({\mathfrak {sl}}(2n+1,{\mathbb{C}}),
   {\mathfrak {sp}}(n,{\mathbb{C}})), 
$
$
     ({\mathfrak {so}}(2n+1,{\mathbb{C}}),
      {\mathfrak {gl}}(n,{\mathbb{C}})),
$
 and $({\mathfrak {so}}(7,{\mathbb{C}}),
       {\mathfrak {g}}_2({\mathbb{C}}))$, 
 and they have been classified 
 in \cite{Br, Kr3}.  
Further, 
 it was proved in \cite{Kb2I}
 that some non-symmetric, real spherical homogeneous spaces
 $G/H$
 such as 
$
  SU(n,n+1)/Sp(n,{\mathbb{R}}),
$
$
  SU(2p+1,2q)/Sp(p,q),
$
$
  G_2({\mathbb{R}})/SL(3,{\mathbb{R}}), 
$
$
     G_2({\mathbb{R}})/SU(2,1), 
$
 etc. 
 admit discrete series representations
 (i.e. irreducible unitary representations
 that occur in closed subspaces
 of the $L^2$-spaces)
 and that some others 
 like 
$
  SL(2n+1,{\mathbb{R}})/Sp(n,{\mathbb{R}})
$
 do not.  

\par\noindent
{\rm{3)}}\enspace 
If we take $H$ to be a maximal unipotent subgroup $N$,
 then \eqref{eqn:HP} holds by the open Bruhat cell. 
The condition \eqref{eqn:HB} is satisfied
 if and only if $G$ is quasi-split.  
Our general formula (Theorem \ref{thm:2.1}) 
 applied to this special case 
 gives an exact estimate of multiplicities
 of generalized Whittaker vectors
 for generic parameter in comparison with 
 the Kostant--Lynch theory
 (\cite {Ks78, Ly}; see Remark \ref{rem:2.4}).  
\end{example}

In Theorems \ref{thm:A} and \ref{thm:B}
 we have allowed $H$ to be non-reductive
 and $\pi$ to be infinite dimensional,
 but have confined $\tau$ to be finite dimensional.  
In Theorems \ref{thm:C} and \ref{thm:D} below,
 we treat the case 
 where both $\pi$ and $\tau$ are 
 allowed to be infinite dimensional.  
Let denote by $\Hom_H(\ ,\ )$ the space of continuous $H$-intertwining
operators.

\begin{thmalph}
[finite multiplicity theorem for restriction]
\label{thm:C}
Assume $H$ is reductive in $G$.  

\noindent
{\rm 1)} If \eqref{eqn:PP} holds,
 then
$
  \dim\Hom_H(\pi|_H,\tau) < \infty
$
 for any $\pi \in \widehat{G}_{\operatorname{ad}}$
 and
 for any $\tau \in \widehat{H}_{\operatorname{ad}}$.

\noindent
{\rm 2)}
Suppose $(G,H)$ is defined algebraically over ${\mathbb{R}}$.  
If \eqref{eqn:PP} fails,
then there exist $\pi \in \widehat {G}_{\operatorname{ad}}$
 and $\tau \in \widehat {H}_{\operatorname{ad}}$
 such that
$
 \dim\Hom_H(\pi|_H,\tau) = \infty
$.
\end{thmalph}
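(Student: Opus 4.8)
The plan is to deduce Theorem~\ref{thm:C} from Theorem~\ref{thm:A} by the classical device that converts a branching problem for $(G,H)$ into an induction problem for the pair $(G',H'):=(G\times H,\ \Delta H)$, where $\Delta H:=\{(h,h):h\in H\}$ embeds $H$ diagonally; its Lie algebra is $\mathfrak g'=\mathfrak g\times\mathfrak h$ and a maximal compact subgroup is $K':=K\times K_H$. Since $H$ is reductive, $G'$ is linear reductive, so Theorem~\ref{thm:A} (which, as noted in Section~\ref{sec:1}, holds in that generality) applies to $(G',H')$.

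First I would set up the reciprocity. The coset space $(G\times H)/\Delta H$ is $G$ itself, carrying the two-sided action $(g,h)\cdot x=gxh^{-1}$, so $\Ind_{\Delta H}^{G\times H}(\mathbf 1)\cong C^{\infty}(G)$ as a $(G\times H)$-representation. By Frobenius reciprocity, $(G\times H)$-maps $\pi\boxtimes\tau^{\vee}\to\Ind_{\Delta H}^{G\times H}(\mathbf 1)$ are the same as $\Delta H$-invariant functionals on $\pi\boxtimes\tau^{\vee}$, hence as $H$-invariant pairings of $\pi$ with $\tau^{\vee}$, hence as continuous $H$-maps $\pi|_H\to\tau$; moreover $\pi\boxtimes\tau^{\vee}$ is again irreducible admissible. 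Passing to underlying $(\mathfrak g',K')$-modules --- here one uses, exactly as in the induction case, that admissibility of $\pi$ and $\tau$ renders the topology immaterial, together with uniqueness of the Casselman--Wallach globalization --- this yields
\[
 \dim\Hom_H(\pi|_H,\tau)
 \;=\;
 c_{\mathfrak g',K'}\!\bigl(\pi\boxtimes\tau^{\vee},\ \Ind_{\Delta H}^{G\times H}(\mathbf 1)\bigr)
 \qquad(\pi\in\widehat G_{\operatorname{ad}},\ \tau\in\widehat H_{\operatorname{ad}}).
\]

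Next I would translate the geometric hypothesis. Taking $P':=P\times P_H$ as a minimal parabolic of $G'$, the real flag variety of $G'$ is $G'/P'=(G/P)\times(H/P_H)$, and $\Delta H$ acts on it through the transitive $H$-action on the second factor; projecting to $H/P_H$, whose point stabilizer is $P_H$, one sees that $\Delta H$ has an open orbit on $(G/P)\times(H/P_H)$ if and only if $P_H$ has an open orbit on $G/P$, i.e.\ condition~\eqref{eqn:HP} for $(G',H')$ is exactly condition~\eqref{eqn:PP} for $(G,H)$. Also, if $(G,H)$ is defined algebraically over $\mathbb R$ then so is $(G',H')$, and $\mathbf 1$ is an algebraic representation of $\Delta H$. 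Statement~1) follows: under \eqref{eqn:PP}, \eqref{eqn:HP} holds for $(G',H')$, and Theorem~\ref{thm:A}~1) applied to $(G',H',\mathbf 1)$ makes the right-hand side of the displayed identity finite for all $\pi,\tau$. For statement~2): if $(G,H)$ is algebraic and \eqref{eqn:PP} fails, then \eqref{eqn:HP} fails for $(G',H')$, so Theorem~\ref{thm:A}~2), applied with the algebraic inducing representation $\mathbf 1$, produces $\pi'\in\widehat{G'}_{\operatorname{ad}}$ --- necessarily an outer tensor product $\pi_0\boxtimes\sigma_0$ --- with $c_{\mathfrak g',K'}(\pi',\Ind_{\Delta H}^{G\times H}\mathbf 1)=\infty$; taking $\tau:=\sigma_0^{\vee}\in\widehat H_{\operatorname{ad}}$, the displayed identity forces $\dim\Hom_H(\pi_0|_H,\tau)=\infty$.

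The step I expect to be the main obstacle is the reciprocity itself: making it rigorous, in both the finiteness and the infiniteness directions, that continuous $H$-intertwining operators between the admissible representations $\pi$ and $\tau$ are faithfully counted by the $(\mathfrak g',K')$-module multiplicity on the right. When $\tau$ is not tempered the naive algebraic branching multiplicity and the continuous one need not coincide a priori, so some genuine input from Casselman--Wallach theory and the uniqueness of moderate-growth globalizations is needed here. Once this matching is secured the argument is entirely formal, and all the analytic content --- the system of differential equations with regular singularities on $G/P$ --- is absorbed into Theorem~\ref{thm:A}. A more hands-on alternative, avoiding $G\times H$, would embed $\pi$ and $\tau^{\vee}$ into principal series of $G$ and of $H$ via Casselman's subrepresentation theorem, reduce by Frobenius to bounding $P_H$-covariant distribution vectors along the $P_H$-orbits on $G/P$, and rerun the micro-local estimates underlying Theorem~\ref{thm:A} with the $H$-action on $G/P$ replaced by the $P_H$-action --- using \eqref{eqn:PP} precisely in the form that $P_H$ has an open orbit on $G/P$.
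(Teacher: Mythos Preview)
Your proposal is correct and follows essentially the same route as the paper: reduce Theorem~\ref{thm:C} to Theorem~\ref{thm:A} for the pair $(G\times H,\Delta H)$, using the geometric equivalence of \eqref{eqn:PP} for $(G,H)$ with \eqref{eqn:HP} for $(G\times H,\Delta H)$ (the paper's Lemma~\ref{lem:PPHP}) together with a reciprocity statement.  The paper resolves your ``main obstacle'' exactly along the lines you anticipate, splitting it into an inequality $\dim\Hom_H(\pi|_H,\tau)\le\dim\Hom_H(\pi^\infty|_H,\tau^\infty)$ valid for arbitrary admissible $\pi,\tau$ (Lemma~\ref{lem:adsmooth}) and a bijection $\Hom_H(\pi|_H,\tau)\simeq\Hom_{G\times H}(\pi\boxtimes(\tau^{*})^\infty,\,C^\infty((G\times H)/\Delta H))$ for smooth $\pi,\tau$ (Lemma~\ref{lem:smooth}); the inequality suffices for part~1), and for part~2) one simply takes the smooth globalizations of the factors of the $\pi'$ supplied by Theorem~\ref{thm:A}~2), so no stronger equality than this is needed.
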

\begin{remark}
\label{rem:HP}
If $H=K$ then its minimal parabolic subgroup $P_H$
 coincides with $K$ itself and the assumption \eqref{eqn:PP} 
 is automatically satisfied because $G=KP$.  
In this simplest case, 
 any irreducible representation $\tau$ is \textit{finite dimensional}
 and our argument for Theorem \ref{thm:C} 1)
 using hyperfunctions recovers 
 so-called Casselman's subrepresentation theorem 
 for which an algebraic proof using Jacquet functors \cite{C}
 is also known.  
Our proof of  
Theorem \ref{thm:C} 1) is given  in Section \ref{sec:2},
which in this special case 
 includes an analytic proof to an earlier work of Harish-Chandra
 \cite{HC}
 that every irreducible quasi-simple representation of $G$
 has finite $K$-multiplicities 
 (cf. Section \ref{sec:2}), 
 for which an algebraic proof is also known
 (cf. \cite[Chapter 3]{W}).  
\end{remark}

Concerning uniform boundedness for the multiplicities of the restriction,
 we consider the following three kinds of conditions:
\begin{alignat}{2}
 &\underset{\tau \in \widehat{H}_{\operatorname{ad}}}\sup \;
 \underset{\pi \in \widehat{G}_{\operatorname{ad}}}\sup  \,
 &&\dim\Hom_H(\pi|_H,\tau) < \infty.   
\label{eqn:GH1}
\\
 &\sup_{\tau \in \widehat {H}_{\infty}} 
   \sup_{\pi \in \widehat {G}_{\infty}}
    &&\dim \Hom_H(\pi|_H, \tau)<\infty.
\label{eqn:GH2}
\\
&\sup_{\tau \in \widehat H_{\operatorname{f}}}\sup_{\pi \in \widehat G_{\operatorname{f}}}
&&\dim \Hom_H(\pi|_H, \tau)<\infty.  
\label{eqn:GH3}
\end{alignat} 
Here $\widehat G_{\infty}$ ($\subset \widehat G_{\operatorname{ad}}$)
 denotes the set of equivalence classes
 of irreducible smooth admissible representations.  
 Clearly,
 \eqref{eqn:GH1} $\Rightarrow$ \eqref{eqn:GH2} $\Rightarrow$ \eqref{eqn:GH3}.  
\begin{thmalph}
[uniformly bounded theorem of multiplicities for restriction]
\label{thm:D}
Assume $H$ is reductive.
\par\noindent
{\rm{1)}}\enspace
The condition \eqref{eqn:BB} implies \eqref{eqn:GH1}
 $($hence, \eqref{eqn:GH2}
 and \eqref{eqn:GH3}, too$)$.  

\par\noindent
{\rm{2)}} \enspace
Assume $(G,H)$ is defined algebraically over ${\mathbb{R}}$.  
Then \eqref{eqn:BB}, 
\eqref{eqn:GH1}, 
 \eqref{eqn:GH2}, 
 and \eqref{eqn:GH3} are all equivalent. 
\end{thmalph}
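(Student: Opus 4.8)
The plan is to deduce both statements from the induction theorems (hence, ultimately, from the upper bound of Theorem~\ref{thm:2.1}, which enters through Theorem~\ref{thm:B}) by means of the diagonal trick. Put $\widetilde G:=G\times H$ and let $\widetilde H:=\Delta H$ be the image of $H$ under $h\mapsto(h,h)$, the first coordinate being the inclusion $H\hookrightarrow G$. Since $H$ is reductive in $G$, the subgroup $\widetilde H$ is reductive in $\widetilde G$ (its Lie algebra is the graph of $\operatorname{id}_{\mathfrak h}$, and $\operatorname{ad}(\Delta{\mathfrak h})$ acts semisimply on ${\mathfrak g}\oplus{\mathfrak h}$), so Theorems~\ref{thm:A}--\ref{thm:D} and the four geometric conditions are available for $(\widetilde G,\widetilde H)$ as well, in the reductive form noted in the introduction. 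I would first record two elementary observations that bridge the two settings. First, identifying $\widetilde G/\widetilde H\simeq G$ via $(g,h)\widetilde H\mapsto gh^{-1}$ (so that $(g_0,h_0)$ acts by $g\mapsto g_0 g h_0^{-1}$, and $\widetilde G/\widetilde H$ is unimodular) and applying Frobenius reciprocity to smooth moderate growth globalizations --- the precise topology being immaterial by elliptic regularity, as in the discussion preceding Theorem~\ref{thm:A} --- one obtains, for $\pi\in\widehat G_{\operatorname{ad}}$ and $\tau\in\widehat H_{\operatorname{ad}}$,
\[
 \dim\Hom_H(\pi|_H,\tau)=c_{\widetilde{\mathfrak g},\widetilde K}\bigl(\pi\boxtimes\tau^{\vee},\,\Ind_{\widetilde H}^{\widetilde G}\mathbf{1}\bigr);
\]
moreover $\pi\boxtimes\tau^{\vee}$ is irreducible admissible, and conversely every element of $\widehat{\widetilde G}_{\operatorname{ad}}$ arises this way from a unique pair $(\pi,\tau)$. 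Second, since $H_c$ acts transitively on $H_c/B_H$, there is a natural isomorphism $\Delta H_c\backslash\bigl((G_c/B)\times(H_c/B_H)\bigr)\simeq B_{H_c}\backslash(G_c/B)$; hence \eqref{eqn:HB} for $(\widetilde G,\widetilde H)$ is equivalent to \eqref{eqn:BB} for $(G,H)$, and when everything is algebraic both assert that the affine variety $\widetilde G_c/\widetilde H_c$ is spherical.

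For statement 1): assume \eqref{eqn:BB}. By the second observation $(\widetilde G,\widetilde H)$ satisfies \eqref{eqn:HB}, so Theorem~\ref{thm:B}\,1), applied to $(\widetilde G,\widetilde H)$ with the one-dimensional $\widetilde\tau=\mathbf{1}$, yields a constant $C$ with $c_{\widetilde{\mathfrak g},\widetilde K}(\widetilde\pi,\Ind_{\widetilde H}^{\widetilde G}\mathbf{1})\le C$ for every $\widetilde\pi\in\widehat{\widetilde G}_{\operatorname{ad}}$. By the first observation this reads $\dim\Hom_H(\pi|_H,\tau)\le C$ for all $\pi\in\widehat G_{\operatorname{ad}}$ and $\tau\in\widehat H_{\operatorname{ad}}$, i.e.\ condition \eqref{eqn:GH1}, from which \eqref{eqn:GH2} and \eqref{eqn:GH3} follow as already noted.

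For statement 2): by those trivial implications and statement~1), it remains to prove \eqref{eqn:GH3}$\Rightarrow$\eqref{eqn:BB} under the algebraic hypothesis. Because $G$ and $H$ are defined algebraically over ${\mathbb R}$ and $H$ is reductive, $H({\mathbb R})$ is Zariski dense in $H_c$ and every finite-dimensional representation of $G$ (resp.\ $H$) is the restriction of an algebraic representation of $G_c$ (resp.\ $H_c$); hence \eqref{eqn:GH3} is equivalent to the uniform boundedness of $\dim\Hom_{H_c}(V|_{H_c},W)$ over all finite-dimensional irreducible representations $V$ of $G_c$ and $W$ of $H_c$. By the algebraic analogue of the first observation --- replacing $\Ind_{\widetilde H}^{\widetilde G}\mathbf{1}$ by the coordinate ring ${\mathbb C}[\widetilde G_c/\widetilde H_c]\simeq{\mathbb C}[G_c]$ with the left-times-right translation action --- these quantities are exactly the multiplicities of the irreducible modules of $\widetilde G_c$ in ${\mathbb C}[\widetilde G_c/\widetilde H_c]$. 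By the classical criterion for affine homogeneous spaces (Vinberg--Kimelfeld, Brion, Kr\"amer; the theory of complexity, see \cite{VK,Br,Kr3,Vi}), such multiplicities are uniformly bounded if and only if $\widetilde G_c/\widetilde H_c$ is spherical --- in the non-spherical case they grow polynomially of degree equal to the (positive) complexity. By the second observation, sphericity of $\widetilde G_c/\widetilde H_c$ is condition \eqref{eqn:BB}. This closes the cycle \eqref{eqn:BB} $\Leftrightarrow$ \eqref{eqn:GH1} $\Leftrightarrow$ \eqref{eqn:GH2} $\Leftrightarrow$ \eqref{eqn:GH3}.

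The step I expect to require the most care is the first observation in the case of infinite-dimensional $\tau$: one must work with smooth Fr\'echet representations of moderate growth and quote the relevant smooth Frobenius reciprocity together with Casselman--Wallach theory in order to equate $\Hom_H(\pi|_H,\tau)$ with a multiplicity in the induced representation on $\widetilde G/\widetilde H$. Granting that identification and the elementary orbit computation of the second observation, statement~1) is immediate from Theorem~\ref{thm:B} (equivalently, from the upper bound of Theorem~\ref{thm:2.1}), while statement~2) reduces to the cited algebraic results, the algebraic hypothesis being used only to transfer condition \eqref{eqn:GH3} between the real groups and their complexifications.
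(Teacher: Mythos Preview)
Your proposal is correct and follows essentially the same route as the paper: reduce the restriction problem for $(G,H)$ to the induction problem for $(\widetilde G,\widetilde H)=(G\times H,\Delta H)$, match the geometric condition \eqref{eqn:BB} with \eqref{eqn:HB} for the pair $(\widetilde G,\widetilde H)$ via the orbit bijection $\Delta H_c\backslash\bigl((G_c/B)\times(H_c/B_H)\bigr)\simeq B_H\backslash(G_c/B)$, and then invoke Theorem~\ref{thm:B} (hence Theorem~\ref{thm:2.1}) for part~1) and the Vinberg--Kimelfeld criterion for part~2). This is exactly the paper's strategy (Lemmas~\ref{lem:adsmooth}, \ref{lem:smooth}, \ref{lem:PPHP} and the inequality~\eqref{eqn:GHH}).

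Two small remarks. First, in your ``first observation'' you assert an \emph{equality}
\[
\dim\Hom_H(\pi|_H,\tau)=c_{\widetilde{\mathfrak g},\widetilde K}\bigl(\pi\boxtimes\tau^{\vee},\Ind_{\widetilde H}^{\widetilde G}\mathbf{1}\bigr),
\]
whereas the paper only establishes the inequality $\le$ (Lemma~\ref{lem:adsmooth} plus Lemma~\ref{lem:smooth} give $\dim\Hom_H(\pi|_H,\tau)\le\dim\Hom_{G\times H}(\pi^\infty\boxtimes(\tau^*)^\infty,C^\infty(\widetilde G/\widetilde H))$, and then Lemma~\ref{lem:top} gives $\le c_{\widetilde{\mathfrak g},\widetilde K}(\dots)$). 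Your equality is fine once you restrict to smooth moderate-growth globalizations and use Casselman--Wallach, as you note in your final paragraph; but only the inequality is needed for part~1), and for part~2) you work with finite-dimensional $\pi,\tau$ where the identification is elementary anyway. Second, for part~2) you go directly to the classical complexity/sphericity criterion on $\mathbb C[\widetilde G_c/\widetilde H_c]$, while the paper phrases this step as ``apply Theorem~\ref{thm:B}~2) to $(\widetilde G,\widetilde H)$'', which in turn unpacks to the same Vinberg--Kimelfeld statement (Lemma~\ref{lem:HB}). So the two presentations differ only cosmetically.
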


\begin{example}
\label{ex:D}
{\rm{1)}}\enspace
Owing to the classification \cite{Kr2}, 
 the condition \eqref{eqn:BB} is equivalent to
 that $({\mathfrak {g}}_c, {\mathfrak {h}}_c)$
 is the direct sum of some copies of 
 $({\mathfrak {sl}}_n({\mathbb{C}}), {\mathfrak {gl}}_{n-1}({\mathbb{C}}))$
 $({\mathfrak {o}}_n({\mathbb{C}}), {\mathfrak {o}}_{n-1}({\mathbb{C}}))$, 
 and the trivial ones
 up to outer automorphisms.  
Therefore the real forms
 such as
$(SL(n,{\mathbb{R}}), GL(n-1,{\mathbb{R}}))$,
$(SU(p,q), U(p-1,q))$,
$(O(p,q), O(p-1,q))$
 are examples
 of the pair $(G,H)$ satisfying 
 \eqref{eqn:BB},
 and therefore \eqref{eqn:PP}, too.  
\par\noindent
{\rm{2)}}\enspace
The symmetric pair $(G,H)=(SO(n,1),SO(k) \times SO(n-k,1))$
 is an example of the pair
 that satisfies the condition \eqref{eqn:PP}
 but does not satisfy \eqref{eqn:BB}
 for $1 < k < n$.  
Likewise 
 $(G,H)=(SU(n,1),S(U(k)\times U(n-k,1)))$
 and $(Sp(n,1),Sp(k)\times Sp(n-k,1))$
 satisfy \eqref{eqn:PP} but not \eqref{eqn:BB}.  
\end{example}

Recently, 
 \lq{multiplicity-one theorems}\rq\
 have been proved in \cite{SZ},
 asserting that the upper bound \eqref{eqn:GH2}
 equals one for certain real forms $(G,H)$
 satisfying the property \eqref{eqn:BB}, 
which gives a finer result than Theorem \ref{thm:D} 1).  
However it should be noted that the uniform bound \eqref{eqn:GH2}
 can be greater than one 
 for some other real forms $(G,H)$ satisfying \eqref{eqn:BB}.
For instance,
 the upper bound \eqref{eqn:GH2}
 equals 2
 if $(G,H)=(SL(2,{\mathbb{R}}),GL(1,{\mathbb{R}})_+)$.  
Our approach here is based on 
 the theory of systems
 of partial differential equations
 with regular singularities, and 
 is completely different from \cite{A, SZ}
 which is based on the Gelfand--Kazhdan criterion.

Our approach using hyperfunction boundary value maps
 naturally connects multiplicities
 with the geometry of the real flag variety.  
As one of applications of Theorem \ref{thm:2.1}
  we can obtain the following geometric result from 
 infinite dimensional representation theory:
\begin{coralph}
\label{cor:D}
For any closed subgroup $H$ of $G$, 
 the number of open $H$-orbits on $G/P$ does not exceed 
 the order of the little Weyl group $W(\mathfrak{a})$.  
\end{coralph}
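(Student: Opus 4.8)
The plan is to extract the bound from the upper estimate of Theorem~\ref{thm:2.1}, using the classical fact that $|W(\mathfrak a)|$ equals the number of $A$-fixed points on $G/P$, equivalently $|P\backslash G/P|$ via the Bruhat decomposition. Write $\mathcal O_1,\dots,\mathcal O_k$ for the open $H$-orbits on $G/P$ and fix base points $x_j=g_jP\in\mathcal O_j$ with isotropy subgroups $H_j:=H\cap g_jPg_j^{-1}$. It suffices to treat $k\ge 1$, and in that case \eqref{eqn:HP} holds, so Theorem~\ref{thm:A}~1) guarantees that every $c_{\mathfrak{g},K}(\pi,\Ind_H^G\tau)$ is finite; the lower bound obtained below will in particular force $k<\infty$.

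The next step is to realize $k$ as a lower bound for one well-chosen multiplicity, which is the geometric heart of Theorem~\ref{thm:2.1}. Let $\pi=I_P^G(\sigma,\zeta)$ be a principal series and $\tau$ a one-dimensional representation of $H$. By Frobenius reciprocity, $\Hom_{\mathfrak{g},K}(\pi_K,\Ind_H^G\tau)$ is identified with the space of hyperfunction sections of the $G$-homogeneous vector bundle over $G/P$ attached to $(\sigma,\zeta)$ that are $H$-equivariant with respect to $\tau$. The regular-singularities analysis behind Theorem~\ref{thm:2.1} shows that, for $(\sigma,\zeta)$ in general position, any such section is supported on $\overline{\mathcal O_1}\cup\dots\cup\overline{\mathcal O_k}$; its restriction to each $\mathcal O_j\cong H/H_j$ spans at most a line, which is nonzero exactly when a certain character of $H_j$ --- built from $H_j\hookrightarrow g_jPg_j^{-1}\twoheadrightarrow g_jMAg_j^{-1}$ together with $\sigma$, $\zeta$ (with the appropriate $\rho$-shift) and $\tau|_{H_j}$ --- is trivial; and at generic parameters these lines extend to genuine global sections with no boundary obstruction, so the contributions of distinct orbits are linearly independent. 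Choosing $(\sigma,\zeta)$ and $\tau$ so that all of these obstruction characters vanish simultaneously (the one place a real choice is needed), we obtain
\[
 k\;\le\;c_{\mathfrak{g},K}\bigl(I_P^G(\sigma,\zeta),\Ind_H^G\tau\bigr).
\]

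Finally, for this same $\pi$ the upper bound of Theorem~\ref{thm:2.1} is at most $|W(\mathfrak a)|$: a hyperfunction section of the bundle over $G/P$ with regular singularities is determined by its leading exponents in the $A$-directions, and these form a single orbit of the little Weyl group $W(\mathfrak a)$ through $\zeta$, hence have cardinality at most $|W(\mathfrak a)|$ --- equivalently, the relevant boundary-value data are indexed by $P\backslash G/P\cong W(\mathfrak a)$. Thus
\[
 c_{\mathfrak{g},K}\bigl(I_P^G(\sigma,\zeta),\Ind_H^G\tau\bigr)\;\le\;|W(\mathfrak a)|,
\]
and combining the two displays gives $k\le|W(\mathfrak a)|$. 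I expect the main obstacle to be the middle step: arranging that a single test pair $(\pi,\tau)$ detects \emph{all} $k$ open orbits at once, since the parameters forced by that requirement may be special enough that $I_P^G(\sigma,\zeta)$ becomes reducible and lower-dimensional orbits begin to contribute spurious boundary terms. This should be controllable by upper semicontinuity of the multiplicity in $(\sigma,\zeta)$ together with the fact that the estimate of Theorem~\ref{thm:2.1} is uniform in the parameter, which reduces the claim to the generic situation in which the count of surviving open orbits and the $W(\mathfrak a)$-bound on exponents match exactly.
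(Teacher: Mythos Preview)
Your overall sandwich strategy---bound a single multiplicity $c_{\mathfrak g,K}(\pi,\Ind_H^G\tau)$ from below by the number $k$ of open $H$-orbits and from above by $\#W(\mathfrak a)$---is exactly what the paper does. But the lower-bound step you propose is far more delicate than necessary, and the ``main obstacle'' you flag at the end is a genuine gap in your argument that the paper simply sidesteps.

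The paper takes $\sigma=\mathbf 1$, $\tau=\mathbf 1$, so $\zeta_\sigma=\mathbf 1$ and one is working with the spherical principal series $I_P^G(\mathbf 1)$ and the trivial line bundle on $G/H$. In that case the ``obstruction character'' at every open orbit is automatically trivial, and there is nothing to arrange: the characteristic function $\chi_j$ of each open orbit $\mathcal O_j$ is an $H$-invariant hyperfunction on $G/P$ (discontinuous but perfectly good as a hyperfunction), and $\chi_1,\dots,\chi_k$ are visibly linearly independent in $\mathcal B(G/P)^H$. Via the Poisson-type pairing of Lemma~\ref{lem:3.2} this gives $k$ independent elements of $\Hom_{\mathfrak g,K}(I_P^G(\mathbf 1),C^\infty(G/H))$; this is Theorem~\ref{thm:3.1} (see Example~\ref{ex:lowbd}\,2)). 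No genericity, no support analysis, no semicontinuity argument is needed. Note also that Lemma~\ref{lem:3.2} gives only an \emph{injection} $(V_\tau\otimes\mathcal B(G/P;V_{\zeta^*,P}))^H\hookrightarrow\Hom_{\mathfrak g,K}(\ipgz,\Ind_H^G\tau)$, not the identification you assert; for a lower bound that is all one needs.

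For the upper bound the paper invokes Corollary~\ref{cor:2.4} (a specialization of Theorem~\ref{thm:2.1} to spherical principal series with $\tau=\mathbf 1$), which gives precisely $\#W(\mathfrak a)$. Your heuristic about leading exponents indexed by $P\backslash G/P\cong W(\mathfrak a)$ is the right intuition, but again you must stay with the spherical case to get the clean bound: for a general $(\sigma,\zeta)$ the right-hand side of \eqref{eqn:2.1} need not collapse to $\#W(\mathfrak a)$.
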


We now outline the paper.  
In Section \ref{sec:2}
 we give a quick review on \lq{hyperfunction boundary maps}\rq\
 where no assumption
 such as $K$-finiteness is required, 
 and prove a formula 
 for the upper bound of the multiplicities
 in Theorem~\ref{thm:2.1},
 which is a key step to prove the upper estimates in Theorems \ref{thm:A}
 to \ref{thm:D}.  
Conversely, 
the proof for a lower estimate of the multiplicities
 is based on a straightforward generalization
 of the construction of the Poisson transform
 for symmetric spaces.  
Theorem \ref{thm:3.1} is a stepping stone for the lower estimates
 in Theorems \ref{thm:A} and \ref{thm:C}.  
Uniform boundedness of multiplicities
 is discussed in Section~\ref{sec:4} 
 based on Theorem~\ref{thm:2.1}, 
 combined with the Borel--Weil theorem for parabolic subgroups
 and a structural result on principal series representations.
Thus we prove the first statement of Theorem \ref{thm:B}.  
The second statement of Theorems \ref{thm:B} and \ref{thm:D}
 reduces to the classical finite dimensional results.  
In Section~\ref{sec:6}
 we discuss multiplicities
 for the restriction of irreducible representations, 
 and complete the proof of Theorems \ref{thm:C} and \ref{thm:D}
 as an application of results in Sections~\ref{sec:2} and \ref{sec:4}.

\section{An upper bound of the multiplicities}
\label{sec:2}

Let $G$ be a connected real semisimple 
Lie group with finite center,
and ${\mathfrak {g}}$ its Lie algebra.  
Let $Z(\mathfrak{g})$ be the center of the enveloping algebra
$U(\mathfrak{g})$ of the complexified Lie algebra $\mathfrak{g}_c$.
Then $Z(\mathfrak{g})$ is a polynomial ring of $\operatorname{rank}{\mathfrak {g}}$
 generators,
 and the Harish-Chandra isomorphism gives a parametrization of maximal ideals of $Z(\mathfrak{g})$:
\[
\Hom_{\mathbb{C}\textrm{-alg}}(Z(\mathfrak{g}),\mathbb{C})
\simeq \mathfrak{j}_c^*/W(\mathfrak{j}), 
 \quad
 \chi_{\lambda} \longleftrightarrow \lambda, 
\]
where ${\mathfrak {j}}$ is a Cartan subalgebra of ${\mathfrak {g}}$
 and $W({\mathfrak {j}})$ is the Weyl group 
 for the root system for $({\mathfrak {g}}_c, {\mathfrak {j}}_c)$.

Let $\pi$ be a continuous representation of $G$ on a complete locally convex
vector space $V$.
Define $V^\infty$ to be the subspace of vectors $v\in V$ for which 
$g\mapsto \pi(g)v$ is a $C^\infty$ map from $G$ into $V$.
Then $V^{\infty}$ is a dense, $G$-invariant subspace of $V$.
Let $\pi^\infty$ denote the restriction of $\pi$ to $V^\infty$.
Then $\pi^\infty$ is a continuous representation on $V^\infty$
endowed with a natural Fr\'{e}chet topology, 
 and is called a \textit{smooth representation}.
It has a property that $(V^{\infty})^{\infty} = V^\infty$.
Following Harish-Chandra 
 we call $\pi$ is {\it{quasi-simple}}
 if $\pi^{\infty}$ restricts
 to scalar multiplication
 on $Z({\mathfrak {g}})$.  

We fix a maximal compact subgroup $K$ of $G$.   
We recall 
(see \cite[Chapters 3,11]{W}, for some further details):
\begin{definition}
\label{def:adm}
A continuous representation $(\pi,V)$ of $G$ of finite length is called
\textit{admissible}
if one of the following equivalent conditions
 are satisfied:
\begin{align}
&\text{$\pi^{\infty}$ is $Z({\mathfrak {g}})$ finite.}
\label{eqn:ad1}
\\
&\text{
$\dim\Hom_K(\delta,\pi)<\infty$ for any irreducible representation
$\delta$ of $K$.
}
\label{eqn:ad2}
\end{align}
\end{definition}
Then the space $V_K$ consisting of $K$-finite vectors of $V$
 is contained in $V^{\infty}$, 
 and we write $\pi_K$ for the underlying $(\mathfrak{g},K)$-module defined on $V_K$.  
We denote by $\widehat{G}_{\operatorname{ad}}$ the set of equivalence classes of
irreducible, admissible representations of $G$ on complete locally
convex topological vector spaces, 
 and by $\widehat G_{\infty}$
 that of smooth ones.  
Here two continuous representations $(\pi_1,V_1)$ and $(\pi_2,V_2)$ of $G$
are defined to be equivalent if there exists a homeomorphic $G$-homomorphism
$T:V_1\to V_2$.  
Naturally we may regard 
$
     \widehat G_{\operatorname{ad}} \supset \widehat G_{\infty} \supset \widehat G_{\operatorname{f}}.  
$

Let $H$ be a closed subgroup of $G$.  
We denote by ${\mathcal{V}}_{\tau}$ the $G$-equivariant vector bundle 
$
  G \times_H V_{\tau}
$
over $G/H$ associated with a finite dimensional representation
$(\tau,V_{\tau})$ of $H$.  
Then we have a representation $\pi$ of $G$
 on the space of sections 
\begin{equation*}
{\mathcal {F}}(G/H;\tau) \simeq{} \{f\in \mathcal{F}(G,V_\tau): f(g h) = \tau(h)^{-1} f(g) 
 \text{ for } h \in H, g \in G\},
\end{equation*}
where 
${\mathcal {F}}={\mathcal{A}}$, $C^\infty$, $\mathcal{D}'$ or ${\mathcal {B}}$
 denote the sheaves of analytic functions, 
 smooth functions, 
 distributions, 
 or hyperfunctions, 
 respectively.

For each $\lambda \in {\mathfrak {j}}_c^*$, 
 the Lie algebra ${\mathfrak {g}}$ acts on 
\begin{align}
  {\mathcal {F}}
(G/H;\tau)_{\lambda}
  \equiv&
{\mathcal {F}}(G/H; \tau, \chi_{\lambda})
\notag
\\
:=&
 \{
   f \in  {\mathcal {F}}(G/H;\tau):d \pi (D)f =\chi_{\lambda}(D)f
  \hphantom{i}\text{ for any $D \in Z(\mathfrak g)$}
\}.  
\label{eqn:FGHInd}
\end{align}
Let $E(G/H;\tau)_{\lambda}$ be 
 the subspace consisting of $K$-finite vectors, 
 which is independent of ${\mathcal {F}}$ 
 as far as $\dim \tau < \infty$
 by analytic elliptic regularity
 \cite[Theorem 3.4.4]{KKK}
 because $d\pi(C_G-2C_K)$ is an elliptic operator, 
 where $C_G$ is the Casimir element of ${\mathfrak {g}}$,
 and $C_K$ is that for ${\mathfrak {k}}$
 with the induced symmetric bilinear form from the restriction
 of the Killing form of ${\mathfrak{g}}$.

The significance of the geometric condition \eqref{eqn:HP} is summarized as follows:
\begin{theorem}
\label{thm:openHP}
If there exists an open $H$-orbit on $G/P$, 
then the $({\mathfrak {g}}, K)$-module $E(G/H;\tau)_{\lambda}$ is
 of finite length for any finite dimensional representation $\tau$ of $H$
 and any $\lambda \in {\mathfrak {j}}_c^*$.  
In particular,
 $C^{\infty}(G/H;\tau)_{\lambda}$ is an admissible representation of $G$.  

\end{theorem}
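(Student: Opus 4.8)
The plan is to reduce the finiteness of length of $E(G/H;\tau)_\lambda$ to the upper bound on multiplicities provided by Theorem~\ref{thm:2.1}, by estimating the length of a $(\mathfrak g,K)$-module with fixed infinitesimal character through the multiplicities of its irreducible constituents. First I would observe that any $(\mathfrak g,K)$-module of finite length with infinitesimal character $\chi_\lambda$ has length bounded by $\sum_{\pi} c_{\mathfrak g,K}(\pi, V)$ where $\pi$ runs over the finitely many $\pi\in\widehat G_{\mathrm{ad}}$ with infinitesimal character $\chi_\lambda$; here one uses the basic fact that, for fixed $\lambda$, there are only finitely many irreducibles with that infinitesimal character (a consequence of the subquotient/subrepresentation theorem together with the finiteness of the relevant data on the minimal principal series), and that each appears in a composition series of $V$ with multiplicity at most its multiplicity as a subquotient. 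So it suffices to bound $c_{\mathfrak g,K}(\pi, E(G/H;\tau)_\lambda)$ uniformly in $\pi$.

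Next I would identify $E(G/H;\tau)_\lambda$, or rather the ambient representation $\mathcal F(G/H;\tau)_\lambda$, with a space of the form controlled by $\mathrm{Ind}_H^G$ so that Theorem~\ref{thm:A}(1) — equivalently the sharper Theorem~\ref{thm:2.1} — applies. The point is that, by analytic elliptic regularity cited in the excerpt, $E(G/H;\tau)_\lambda$ is the space of $K$-finite vectors in $C^\infty(G/H;\tau)_\lambda \subset C^\infty(G/H;\tau) = \mathrm{Ind}_H^G\tau$ (in the $C^\infty$ model), hence
\[
c_{\mathfrak g,K}\bigl(\pi, E(G/H;\tau)_\lambda\bigr) \le c_{\mathfrak g,K}\bigl(\pi, \mathrm{Ind}_H^G\tau\bigr),
\]
and the right-hand side is finite by Theorem~\ref{thm:A}(1) since \eqref{eqn:HP} is assumed. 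Therefore $\sum_\pi c_{\mathfrak g,K}(\pi, E(G/H;\tau)_\lambda)<\infty$, which gives that $E(G/H;\tau)_\lambda$ has finite length as a $(\mathfrak g,K)$-module.

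For the last assertion, that $C^\infty(G/H;\tau)_\lambda$ is an admissible representation of $G$: admissibility in the sense of Definition~\ref{def:adm} requires finite length as a $G$-representation plus $Z(\mathfrak g)$-finiteness (or, equivalently, finite $K$-multiplicities). The $Z(\mathfrak g)$-finiteness is immediate since $Z(\mathfrak g)$ acts by the single character $\chi_\lambda$ by construction of $\mathcal F(G/H;\tau)_\lambda$. Finiteness of $K$-multiplicities follows because the $K$-finite part $E(G/H;\tau)_\lambda$ has finite length, hence each $K$-type occurs finitely often, and by elliptic regularity the $K$-finite vectors of $C^\infty(G/H;\tau)_\lambda$ coincide with $E(G/H;\tau)_\lambda$; finite length of $C^\infty(G/H;\tau)_\lambda$ as a $G$-representation then follows from finite length of its $(\mathfrak g,K)$-module together with a standard argument (a smooth admissible representation of finite length as a $(\mathfrak g,K)$-module is of finite length, via the Casselman–Wallach theory or directly from the fact that closed $G$-invariant subspaces are in bijection with $(\mathfrak g,K)$-submodules of $K$-finite vectors for admissible Fréchet representations).

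The main obstacle I expect is the passage between the analytic/hyperfunction models and the $C^\infty$ model, and more precisely making rigorous that $E(G/H;\tau)_\lambda$ — defined as $K$-finite vectors and asserted to be independent of $\mathcal F$ — indeed computes multiplicities $c_{\mathfrak g,K}(\pi,\cdot)$ that are bounded by those for $\mathrm{Ind}_H^G\tau$ in whichever model Theorem~\ref{thm:2.1} is proved. Once the elliptic regularity identification is invoked cleanly this is routine, but one must be careful that the eigenspace condition $d\pi(D)f=\chi_\lambda(D)f$ does not lose $K$-finite vectors when intersected with $C^\infty$ rather than $\mathcal B$; this is exactly the content of \cite[Theorem 3.4.4]{KKK} applied to the elliptic operator $d\pi(C_G-2C_K)$, so no additional work is needed beyond citing it. A secondary, purely bookkeeping point is justifying the finiteness of the set of $\pi$ with a given infinitesimal character, which I would handle by the standard reduction to subquotients of minimal principal series and the finiteness of the Weyl group orbit data.
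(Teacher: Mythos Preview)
There is a genuine gap. Your key inequality ``length of $V$ is bounded by $\sum_\pi c_{\mathfrak g,K}(\pi,V)$'' is false: by definition $c_{\mathfrak g,K}(\pi,V)=\dim\Hom_{(\mathfrak g,K)}(\pi_K,V)$ counts copies of $\pi$ occurring as \emph{submodules}, not as subquotients. For a nonsplit extension $0\to L_1\to V\to L_2\to 0$ one has length $2$ while $c_{\mathfrak g,K}(L_1,V)+c_{\mathfrak g,K}(L_2,V)=1+0=1$. More seriously, finiteness of $\dim\Hom(\pi,V)$ for every irreducible $\pi$ (even together with the finiteness of the set of $\pi$ with the given infinitesimal character) does not by itself force $V$ to have finite length: you would need to know a priori that $E(G/H;\tau)_\lambda$ is admissible, or finitely generated, or embeds into a finite-length module --- none of which you have established. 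Your own phrasing (``any $(\mathfrak g,K)$-module \emph{of finite length} \ldots has length bounded by \ldots'') already assumes the conclusion. Note also that in the paper's logic Theorem~\ref{thm:A}~1) is \emph{deduced from} Theorem~\ref{thm:openHP}, so invoking it here is circular; Theorem~\ref{thm:2.1} is proved independently, but it too only bounds $\dim\Hom$, not Jordan--H\"older multiplicities.

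What the paper actually does is stronger and avoids this issue entirely. Using the hyperfunction boundary value maps $\beta_\mu^i$ it produces an \emph{injective} $(\mathfrak g,K)$-homomorphism
\[
\bar\beta:\ \operatorname{gr}E(G/H;\tau)_\lambda\ \hookrightarrow\ \bigoplus_{(\mu,i)\in I_\lambda}\ \bigoplus_{\zeta\in A(\lambda,\mu)} I_P^G(\zeta)\otimes\Hom_{H\cap\bar P}(V_\zeta,V_{\tau,\bar P}),
\]
where the right-hand side is a \emph{finite} direct sum of principal series tensored with finite-dimensional vector spaces. Since each $I_P^G(\zeta)$ has finite length, the target has finite length, hence so does the submodule $\operatorname{gr}E(G/H;\tau)_\lambda$, and therefore $E(G/H;\tau)_\lambda$ itself. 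The finite-multiplicity statement of Theorem~\ref{thm:2.1} and the subquotient bound in the subsequent Proposition are then read off from this same embedding; they are consequences of the construction, not inputs to it. To repair your argument you would have to reproduce exactly this embedding (or some other a priori finiteness such as admissibility of $E(G/H;\tau)_\lambda$), at which point the detour through $c_{\mathfrak g,K}(\pi,\Ind_H^G\tau)$ becomes unnecessary.
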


The first statement of Theorem \ref{thm:A}
 follows from Theorem \ref{thm:openHP}.  
The main goal of this section is
 to give a quantitative estimate of Theorem \ref{thm:openHP}, 
namely, 
 an upper bound for the multiplicities
 of irreducible subquotients
 in $E(G/H;\tau)_{\lambda}$
 under the condition \eqref{eqn:HP} (see Theorem \ref{thm:2.1}).  
In the course of its proof,
 we prove Theorem \ref{thm:openHP}, too.      

Let us fix some notation.  
Let ${\mathfrak {g}}={\mathfrak {k}}+{\mathfrak {s}}$ be
 the Cartan decomposition corresponding to $K$, 
 and take a Cartan subalgebra $\mathfrak{j}$ of $\mathfrak{g}$ 
 such that $\mathfrak{a}:= \mathfrak{j} \cap \mathfrak{s}$ is a maximal
 abelian subspace in $\mathfrak{s}$.
We put $\mathfrak{t} = \mathfrak{k}\cap\mathfrak{j}$.  
Let $\mathfrak{j}_c$, $\mathfrak{a}_c$ and $\mathfrak{t}_c$ be the complexifications of
$\mathfrak{j}$, $\mathfrak{a}$ and $\mathfrak{t}$, and let denote by
$\mathfrak{j}_c^*$, $\mathfrak{a}_c^*$ and $\mathfrak{t}_c^*$ the spaces of complex linear
forms on them, respectively.  
By the Killing form of $\mathfrak{g}_c$ we identify $\mathfrak{a}_c^*$ and 
$\mathfrak{t}_c^*$ with subspaces of $\mathfrak{j}_c^*$.  
Let $\Sigma(\mathfrak{j})$, $\Sigma(\mathfrak{t})$ and $\Sigma(\mathfrak{a})$ be the 
set of the roots for the pairs $(\mathfrak{g}_c, \mathfrak{j}_c)$,
$(\mathfrak{m}_c,\mathfrak{t}_c)$ and $(\mathfrak{g}, \mathfrak{a})$, respectively,
and let $W(\mathfrak{j})$, $W(\mathfrak{t})$ and $W(\mathfrak{a})$ be the
associated Weyl groups.  
Here $\mathfrak{m}_c$ is the centralizer of $\mathfrak{a}_c$ in $\mathfrak{k}_c$.
We fix compatible positive systems $\Sigma(\mathfrak{t})^+$, $\Sigma(\mathfrak{j})^+$ 
and $\Sigma(\mathfrak{a})^+$, and let $\rho$ denote half the sum of 
 roots in $\Sigma(\mathfrak{j})^+$ and we put $\rhoc = \rho|_{\mathfrak{t}}$ and 
$\rhon= \rho|_{\mathfrak{a}}$.
Naturally we have $\Sigma(\mathfrak{t})^+\subset\Sigma(\mathfrak{j})^+$.
Put $A=\exp\mathfrak{a}$ and let $M$ be the centralizer of $\mathfrak{a}$ in $K$, 
 $L:=MA$, 
 and $N$ the maximal nilpotent subgroup of $G$ corresponding to
$\Sigma(\mathfrak{a})^+$.  Then $P=LN =MAN$ is a minimal parabolic subgroup.  
We denote by $\mathbb{C}_{\rhon}$
 the one dimensional representation of $P$
 given by $p \mapsto |\det (\Ad (p):{\mathfrak {n}} \to {\mathfrak {n}})|^{\frac 1 2}$.  
Its differential representation equals $\rhon$
 when restricted to $\mathfrak{j}$.

Given $(\zeta,V_\zeta) \in \Lrep$,
 we extend it to a representation of $P$ with trivial action of $N$, 
 and define another irreducible representation of $P$ by 
\begin{equation}
\label{eqn:Vzp}
     V_{\zeta,P} := V_\zeta \otimes \mathbb{C}_\rhon.
\end{equation}
Similarly, 
 a $\bar{P}$-module 
$
     V_{\zeta,\bar{P}} := V_\zeta \otimes \mathbb{C}_\rhonbar
$
 is defined.
Let $\mathcal{V}_{\zeta,P}:=G\times_P V_{\zeta,P}$ be a
 $G$-equivariant vector bundle over $G/P$ associated with the $P$-module
 $\vzp$,
and we write $\mathcal{F}(U;\vzp)$ for the space of
 ${\mathcal{F}}={\mathcal{A}}$, ${\mathcal{B}}$, $C^\infty$ or
 $\mathcal{D}'$-valued sections of ${\mathcal{V}}_{\zeta,P}$ on an open set $U$ of $G/P$.
We write $\ipgz$ for  the underlying $({\mathfrak {g}},K)$
-module of the normalized principal series representation ${\mathcal{F}}(G/P; \vzp)$.  
Then the $Z(\mathfrak{g})$-infinitesimal character of 
    $\ipgz$
equals
 $d \zeta +\rho_{\mathfrak{t}} \in \mathfrak{j}_c^*$
 where $d \zeta$ denotes the highest weight 
 of the finite dimensional representation $\zeta$ of the Lie algebra
 ${\mathfrak {m}} + {\mathfrak {a}}$
 with respect to $\Sigma ({\mathfrak {t}})^+$.

For two continuous representations $\pi$ and $\pi'$, 
 we write $\Hom_G(\pi,\pi')$ for the space of continuous $G$-homomorphisms.  
For two $(\mathfrak{g},K)$-modules $E$, $E'$,
we set
$$
  c_{\mathfrak{g},K}(E,E')
  := \dim \Hom_{(\mathfrak{g},K)}(E, E').
$$

By a little abuse of notation,
 we also write $c_{\mathfrak{g},K}(\pi,E')$ for $c_{\mathfrak{g},K}(\pi_K,E')$
if $\pi_K$ is
 the underlying $(\mathfrak{g},K)$-module of
 $\pi \in\widehat{G}_{\operatorname{ad}}$.
We recall the following fundamental results on $({\mathfrak {g}}, K)$-modules
 and their globalizations:
\begin{lemma}
\label{lem:top}
{\rm{1)}}\enspace
For any two admissible representations $\pi$, $\pi'$ on complete, 
 locally convex vector space,
 we have
\begin{equation}
\label{eqn:Gg}
  \dim \Hom_{G}(\pi,\pi') \le c_{\mathfrak g, K}(\pi_K, \pi_K')
\end{equation}
{\rm{2)}}\enspace
For any two admissible $({\mathfrak {g}}, K)$-modules $E$, $E'$, 
 there exist admissible representations $\pi$, $\pi'$ of $G$
 such that the equality holds in \eqref{eqn:Gg} with 
$\pi_K \simeq E$ and $\pi_K' \simeq E'$.   
\end{lemma}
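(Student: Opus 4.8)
The plan is to make everything flow through the operation of restricting a continuous intertwiner to its $K$-finite part. For part~1), let $(\pi,V)$ and $(\pi',V')$ be admissible, and let $T\colon V\to V'$ be a continuous $G$-homomorphism. First I would check that $T$ is compatible with the $(\mathfrak g,K)$-structures: if $v\in V$ is $K$-finite then $\{\pi'(k)T(v)\}=\{T(\pi(k)v)\}$ spans a finite-dimensional space, so $T(V_K)\subset V'_K$; and since $V_K\subset V^{\infty}$ and $T\pi(g)=\pi'(g)T$, differentiation shows $T$ commutes with the $\mathfrak g$-action. Hence $T\mapsto T_K:=T|_{V_K}$ defines a linear map $\Hom_G(\pi,\pi')\to\Hom_{(\mathfrak g,K)}(\pi_K,\pi'_K)$, and the inequality \eqref{eqn:Gg} is exactly the assertion that this map is injective. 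Injectivity follows from the density of $V_K$ in $V$: a continuous operator vanishing on a dense subspace vanishes. That density I would get by combining the two facts recalled in the text. Namely, $V^{\infty}$ is dense in $V$; and $V^{\infty}$, being itself a continuous representation of the compact group $K$ on a complete locally convex space, is the closure of its $K$-finite part $\bigoplus_{\delta\in\widehat K}V^{\infty}[\delta]=V_K$, since the matrix-coefficient projectors $v\mapsto(\dim\delta)\int_K\overline{\chi_\delta(k)}\,\pi(k)v\,dk$ and their finite partial sums approximate the identity. Thus $V_K$ is dense in $V$, and part~1) is done.

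For part~2) I would take $\pi$ and $\pi'$ to be globalizations of $E$ and $E'$ produced by a fully faithful exact globalization functor on the category of admissible finite-length $(\mathfrak g,K)$-modules; for definiteness the Casselman--Wallach smooth globalization of moderate growth (see \cite{W}), though the minimal ($C^{\omega}$) or maximal (hyperfunction) globalizations would serve equally well. By construction $\pi_K\simeq E$ and $\pi'_K\simeq E'$, and part~1) already gives $\dim\Hom_G(\pi,\pi')\le c_{\mathfrak g,K}(E,E')$. For the reverse inequality I would invoke the automatic continuity theorem of Casselman--Wallach: every $(\mathfrak g,K)$-homomorphism $E\to E'$ extends to a continuous $G$-homomorphism between the chosen globalizations, and this extension is unique by part~1). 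Hence $T\mapsto T_K$ is an isomorphism for this pair, and the two inequalities combine to the equality.

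The routine points — that $T$ preserves smooth and $K$-finite vectors, and that the $K$-projectors behave as claimed — are standard, and part~1) is essentially formal once the density of $V_K$ is in place. The one genuinely nontrivial ingredient, and the place where all of the analytic difficulty is concentrated, is the Casselman--Wallach automatic continuity (equivalently, the full faithfulness of the globalization functor) used in part~2); this is the step I expect to be the main obstacle, and I would simply cite it rather than reprove it.
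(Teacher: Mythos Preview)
Your proposal is correct and follows the same line as the paper: the paper simply declares part~1) ``easy'' (your density-of-$V_K$ argument is exactly the intended unpacking) and for part~2) cites the Casselman--Wallach smooth globalization \cite[Theorem~11.6.7]{W}, just as you do. You have supplied the details the paper omits, with no divergence in strategy.
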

The first statement is easy .  
For 2), 
 the choice of such globalizations
 is not unique.  
For example,
 we can take $\pi$ and $\pi'$ to be smooth representations, 
 by the Casselman--Wallach completion 
 \cite[Theorem 11.6.7]{W}.

For $\pi \in \widehat{G}_{\operatorname{ad}}$,
$(\tau,V_\tau)\in\widehat{H}_{\operatorname{f}}$,
and 
$
      (\zeta,V_\zeta)\in\widehat{L}_{\operatorname{f}}\simeq \widehat P_{\operatorname{f}}
 \simeq \widehat{\bar P}_{\operatorname{f}}$,
 we define
\begin{align}\label{eqn:cpitau}
  c_{\mathfrak{g},K}(\pi,\Ind_H^G\tau)
  &:= \dim \Hom_{(\mathfrak{g},K)}(\pi_K, \mathcal{F}(G/H;\tau)),
\\
      c_{H\cap\bar P}(V_{\zeta}, V_{\tau,\bar{P}})
 &:= \dim \Hom_{H \cap \bar{P}}(V_{\zeta},V_{\tau,\bar{P}}).
\nonumber
\end{align}
Then, \eqref{eqn:cpitau}
 is independent of $\mathcal{F}$
 because the image of a $({\mathfrak {g}},K)$-homomorphism
 is contained in ${\mathcal {A}}(G/H;\tau)$
 by analytic elliptic regularity.  

We set
\begin{align}\label{eqn:wjlmd}
W_\lambda &:=\{w \in W(\mathfrak{j}): w \lambda = \lambda\},
\nonumber
\\
W(\mathfrak{j}; \lambda)
&      :=
      \{v \in W(\mathfrak{j}):(v \lambda)_{|\mathfrak{a}} = \lambda_{|\mathfrak{a}}\}.
\end{align}
We say $\lambda \in \mathfrak{j}_c^*$ is \textit{regular} if $W_\lambda=\{e\}$.

The first main result of this paper is:
\begin{theorem}
\label{thm:2.1}
Let $H$ be a closed subgroup of $G$ 
 and suppose $H\bar P$ is open in $G$. 
Then for any finite dimensional representation $\tau$ of $H$
 and any $\pi\in{\widehat G}_{\operatorname{ad}}$, 
 we have 
\begin{equation}
\label{eqn:2.1}
  c_{\mathfrak{g},K}(\pi,\Ind_H^G\tau)
   \le 
   \# \left(W(\mathfrak{t})\backslash W(\mathfrak{j}; \lambda)\right)
   \sum_{\zeta\in \Lrep}
    c_{\mathfrak{g},K}(\pi, \ipgz)
      \cdot c_{H\cap\bar P}(V_{\zeta}, V_{\tau,\bar P}).
\end{equation}
Here $\lambda \in {\mathfrak {j}}_c^*$ is the $Z(\mathfrak{g})$-infinitesimal character of $\pi$.  
If $\lambda$ is regular,
 then we have
$$
  c_{\mathfrak{g},K}(\pi,\Ind_H^G\tau)
   \le \sum_{\zeta\in \Lrep}
      c_{\mathfrak{g},K}(\pi, \ipgz)
      \cdot c_{H\cap\bar P}(V_{\zeta}, V_{\tau, \bar P}).
$$
\end{theorem}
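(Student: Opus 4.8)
The plan is to reduce the bound on $c_{\mathfrak{g},K}(\pi,\Ind_H^G\tau)$ to a statement about boundary value maps on the real flag variety $G/P$, exploiting the hypothesis that $H\bar P$ is open in $G$. First I would set up the hyperfunction boundary value map: given a $(\mathfrak{g},K)$-homomorphism $\varphi\colon \pi_K \to \mathcal{B}(G/H;\tau)$, I realize $\mathcal{B}(G/H;\tau)$ as a space of hyperfunctions on $G$ satisfying the $H$-covariance and the $Z(\mathfrak{g})$-eigenequation $d\pi(D)f = \chi_\lambda(D)f$. Because $\pi_K$ has infinitesimal character $\lambda$, the image lands in $\mathcal{B}(G/H;\tau)_\lambda$, and one may restrict the resulting hyperfunctions to the open set $H\bar P/H \subset G/H$ (equivalently, pull back along the open $H$-orbit). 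The system of equations defining the joint $Z(\mathfrak{g})$-eigenspace has regular singularities along the boundary of the big Bruhat-type cell, so micro-local analysis (the theory of systems with regular singularities, as reviewed in Section~\ref{sec:2}) produces a boundary value map whose target is a direct sum, over the ``exponents'' of the indicial equation, of sections of vector bundles on $G/P$; these exponents are indexed precisely by the set $W(\mathfrak{t})\backslash W(\mathfrak{j};\lambda)$, and the bundles are the $\mathcal{V}_{\zeta,P}$ for $\zeta \in \Lrep$.

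Next I would argue that this boundary value map, composed with $\varphi$, is injective on $\pi_K$. Injectivity is where the openness of $H\bar P$ enters decisively: a $K$-finite element of $\mathcal{B}(G/H;\tau)_\lambda$ that vanishes to infinite order along the relevant boundary, hence has zero boundary value, must vanish on the open set $H\bar P/H$; since $\pi_K$ is irreducible (or one argues subquotient by subquotient) and the $H$-action is transitive on that open orbit, a holonomic/unique-continuation argument forces the section to vanish identically, so $\varphi$ itself is zero unless the boundary value is nonzero. This yields an injection
\[
  \Hom_{(\mathfrak{g},K)}(\pi_K, \mathcal{F}(G/H;\tau))
  \hookrightarrow
  \bigoplus_{W(\mathfrak{t})\backslash W(\mathfrak{j};\lambda)}\ \bigoplus_{\zeta\in\Lrep}
  \Hom_{(\mathfrak{g},K)}\bigl(\pi_K,\ \mathcal{B}(G/P;\vzp)\bigr)\otimes \Hom_{H\cap\bar P}(V_\zeta, V_{\tau,\bar P}),
\]
where the $\Hom_{H\cap\bar P}$ factor records the $H$-covariance data surviving at the boundary (the stabilizer in $H$ of the base point of the open orbit is $H\cap\bar P$, and the boundary value of a $\tau$-covariant section is $V_{\tau,\bar P}$-covariant). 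Identifying $\Hom_{(\mathfrak{g},K)}(\pi_K,\mathcal{B}(G/P;\vzp)) = \Hom_{(\mathfrak{g},K)}(\pi_K, \ipgz) = c_{\mathfrak{g},K}(\pi,\ipgz)$ via Casselman's subrepresentation theorem and analytic elliptic regularity, and taking dimensions, gives exactly the first inequality in the theorem.

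The second, sharper inequality under the regularity hypothesis $W_\lambda = \{e\}$ would follow by observing that for regular $\lambda$ the double coset space $W(\mathfrak{t})\backslash W(\mathfrak{j};\lambda)$ need not be a singleton, so a slightly finer accounting is needed: one refines the boundary value map so that, for regular $\lambda$, each exponent $w$ contributes a copy of a principal series $I_P^G(\zeta_w)$ for a $w$-dependent $\zeta_w$, but all the $\zeta_w$ together still range without repetition over $\Lrep$ as $w$ varies, so summing over exponents and over $\zeta$ is not a genuine double sum but collapses to the single sum $\sum_{\zeta\in\Lrep}$. Concretely I would check that the indicial roots of the regular-singularity system, read off from the characters $d\zeta + \rho_{\mathfrak{t}}$, are in bijection (for regular $\lambda$) with $\Lrep$ itself rather than with $\Lrep \times (W(\mathfrak{t})\backslash W(\mathfrak{j};\lambda))$, because the $W(\mathfrak{t})$-orbit on $\mathfrak{t}_c^*$ already absorbs the ambiguity. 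The main obstacle I anticipate is establishing the regular-singularity structure of the $Z(\mathfrak{g})$-eigensystem transverse to the boundary of the open $H\bar P$-orbit and correctly computing its indicial equation, i.e.\ proving that the exponents are governed by $W(\mathfrak{j};\lambda)$ modulo $W(\mathfrak{t})$ and that the associated graded pieces are the principal series $\ipgz$ with the right normalization $V_{\zeta,P} = V_\zeta \otimes \mathbb{C}_{\rho_{\mathfrak{n}}}$; the injectivity (unique continuation) step is conceptually important but technically more standard once the boundary value machinery is in place.
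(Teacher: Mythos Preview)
Your high-level strategy---use the regular-singularity/boundary-value machinery to embed $\Hom_{(\mathfrak g,K)}(\pi_K,\Ind_H^G\tau)$ into a finite direct sum of principal-series Homs tensored with $\Hom_{H\cap\bar P}$---is exactly the paper's. But the implementation you sketch has a genuine gap and one incorrect explanation.

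\textbf{The gap: where the regular-singularity theory lives.} You propose to restrict sections to the open set in $G/H$ and invoke regular singularities ``along the boundary of the big Bruhat-type cell'' there. For a general closed subgroup $H$ no such theory on $G/H$ is available; there is no compactification of $G/H$ with the required normal-crossing structure, and the $Z(\mathfrak g)$-eigensystem is not known to have regular singularities along the boundary of $\bar PH/H$. The paper circumvents this by working on the \emph{group manifold} $G$, regarded as the symmetric space $(G\times G)/\Delta G$. On $G$ the second author's theory (the compactification $\widetilde G$ and the inductively defined boundary maps $\beta_\mu^i$) applies unconditionally, producing $(\mathfrak g\oplus\mathfrak g)$-equivariant maps from a filtration of $\mathcal B(G;\chi_\lambda)$ into hyperfunction sections on open subsets of $G/P\times G/\bar P$. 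Only \emph{after} these maps are in hand does one use the hypothesis: restrict the second factor to the open $H$-orbit $H\bar P/\bar P\subset G/\bar P$, tensor with $V_\tau$, and pass to $\Delta(H)$-invariants. The identification
\[
\bigl(\mathcal B(H\bar P/\bar P;V_{\zeta^*,\bar P})\otimes V_\tau\bigr)^{\Delta(H)}
\simeq \Hom_{H\cap\bar P}(V_\zeta,V_{\tau,\bar P})
\]
then yields the $H\cap\bar P$ factor. Injectivity comes from Holmgren's uniqueness theorem for the boundary maps on $G$ together with analytic elliptic regularity ($K$-finite solutions are real analytic, so vanishing on an open set forces global vanishing). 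This detour through the group is the essential technical idea; without it you would have to build the boundary-value theory on $G/H$ from scratch.

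\textbf{The filtration.} The maps $\beta_\mu^i$ are defined only on successive kernels, so they give a \emph{filtration} of $\mathcal B(G;\chi_\lambda)$, not a splitting. The injection you want is at the level of the associated graded module; one uses that $\pi_K$ is irreducible to pass from $\Hom_{\mathfrak g,K}(\pi_K,\mathcal B(G/H;\tau))$ to $\Hom_{\mathfrak g,K}(\pi_K,\operatorname{gr}\mathcal B(G/H;\tau,\chi_\lambda)_K)$.

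\textbf{The regular case.} Your explanation (the $\zeta_w$ ``range without repetition over $\Lrep$'') is not the mechanism. The index set for the boundary maps is $I_\lambda=\{(\mu,i):\,(\lambda,\mu)\in\Xi,\ 1\le i\le N_{\lambda,\mu}\}$, where $N_{\lambda,\mu}$ counts possible logarithmic terms. The general bound uses $N_{\lambda,\mu}\le \#(W(\mathfrak t)\backslash W(\mathfrak j;\lambda))$; when $\lambda$ is regular one has $N_{\lambda,\mu}\le 1$, so the index $i$ disappears. Since the sets $A(\lambda,\mu)=\{\zeta:d\zeta+\rho_{\mathfrak t}\in W(\mathfrak j)\lambda,\ d\zeta|_{\mathfrak a}=\mu\}$ are already disjoint as $\mu$ varies, $\sum_\mu\sum_{\zeta\in A(\lambda,\mu)}$ is a single sum over $\zeta$. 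That is the whole argument for the sharper inequality.
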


Note that $c_{\mathfrak{g},K}(\pi, \ipgz)$ 
 is nonzero only for finitely many $\zeta\in\widehat{L}_{\operatorname{f}}$ for a
 fixed $\pi\in\widehat{G}_{\operatorname{ad}}$.
Further, $c_{\mathfrak{g},K}(\pi,\ipgz)$ is finite for any
 $\pi$ and $\zeta$ (in fact, it is uniformly bounded, see Proposition \ref{prop:4.2}).
Hence if $HgP$ is open for some $g\in G$, 
 then $c_{\mathfrak{g},K}(\pi,\Ind_H^G\tau)<\infty$, 
 which follows from Theorem~\ref{thm:2.1} 
 with $\bar P$ replaced by $g P g^{-1}$.

\begin{remark}
\label{rem:2.4}
{\rm 1)}
If $G$ is compact, then $G=P=\bar P$ and the equality holds in \eqref{eqn:2.1},
which is the Frobenius reciprocity theorem.

\noindent
{\rm 2)}
If $H=N$
 then the assumption in Theorem~\ref{thm:2.1} is satisfied.
In particular,
 if $\zeta_{|\mathfrak{a}} \in\mathfrak{a}_c^*$ is generic,
\eqref{eqn:2.1} implies the following inequality:
\begin{equation}
\label{eqn:Lynch}
 c_{\mathfrak{g},K}(\ipgz,\Ind_N^G\tau)\le\#W(\mathfrak{a})\cdot\dim\zeta.
\end{equation}
In this special case,
our estimate \eqref{eqn:2.1} is best possible.  
Indeed the equality holds in \eqref{eqn:Lynch}
 as was proved by T. Lynch \cite[Theorem 6.4]{Ly}.  

\noindent
{\rm 3)}
If $(G,H)$ is a semisimple symmetric pair,
 then the assumption in Theorem~\ref{thm:2.1} is satisfied.
In this special case,
 our estimate \eqref{eqn:2.1}
 improves the estimate by van den Ban \cite{Ba}
 based on a different method.  
See Example \ref{ex:AB}.

\noindent
{\rm 4)}
There exists an open $H$-orbit in $G/P$
 if and only if the number of 
the $H$-orbits in $G/P$ is finite.
The complex case was proved by Brion and Vinberg,
and the proof of the general case
 was given by the Matsuki reduction to the real rank group (\cite{M})
and the earlier classification of such subgroups $H$
 for the real rank group by Kimelfeld \cite{Kim}.  
An analogous statement does not hold if $P$ is replaced by
 general parabolic subgroups $P$.
See \cite{Bien}, 
 and also \cite[\S 2]{Kb5}
 and references therein.

\noindent
{\rm 5)}
Once we tell a priori the finiteness of the multiplicities
 of a representation $\pi$ in the induced representation 
$\Ind_H^G\tau$ by Theorem~\ref{thm:2.1},
 we may wish to understand functions that belong to a subrepresentation isomorphic to $\pi$
in $\Ind_H^G\tau$.
Some real spherical homogeneous spaces $G/H$ 
including symmetric spaces admit 
a generalized Cartan decomposition $G = K A H$
 with split abelian subgroup $A$,
 which is a useful geometric structure to analyze the asymptotic behavior
 of those functions
 by
 the reduction to $A$
 (cf. \cite[Remark 3.6]{Kb5}).  
On the other hand, 
 we obtained in 
 \cite{Kb97}
 some growth estimate at infinity
 of the $G$-invariant Radon measure
 on non-symmetric reductive spaces
 without a generalized Cartan decomposition $G = K A H$.  
\end{remark}

Our machinery for the proof of Theorem~\ref{thm:2.1}
 is the theory of regular
singularities of a system of partial differential equations \cite{KO, O1}.   
We regard the group manifold $G$
 as a symmetric space $(G \times G)/\Delta G$,
and apply the construction of taking the boundary values of $\mathcal{B}(G;\chi_{\lambda})$ 
to the hyperfunction valued principal series of $G\times G$. 
If a solution $f \in {\mathcal{B}}(G;\chi_{\lambda})$
 defined in \eqref{eqn:Blmd} below
 is ideally analytic at a boundary point of $G$
 in the compactification $\widetilde G$
 constructed in \cite[\S 1]{O2}
 then $f$ is expressed as a sum of convergent series.  
If $f$ is $(K \times K)$-finite 
then $f$ is automatically ideally analytic at
 any boundary point
 and this expression was studied earlier
 by Harish-Chandra, 
 and then by Casselman and Mili\v ci\'c
 among others
(see \cite[vol.1, Ch.4]{W}
 and references therein).  
However,
 in our setting, 
 we cannot assume 
 that $f$ is $(K \times K)$-finite.  
The advantage of our approach
 is that the boundary maps are well-defined
 inductively
 even locally 
 (without $K$-finiteness condition)
 as $({\mathfrak {g}}+ {\mathfrak {g}})$-homomorphisms, 
 which enables us to capture
 the left ${\mathfrak {g}}$-module
 ${\mathcal{B}}(G;\chi_{\lambda}, \tau)$ 
 as a filtered module 
 by assuming only the condition \eqref{eqn:HP}.   
We review briefly some results 
 of \cite{O2} 
 in a way that we need.  

We let $G\times G$ act on
the space $\mathcal{B}(G)$ of hyperfunctions on $G$ from the
left and right:
$$
    (\pi_L(g)\scirc\pi_R(g')f)(x) = f(g^{-1}xg') 
    \quad
    \text{for }
    (g,g')\in G\times G\text{ and }f\in\mathcal{B}(G).  
$$
For $\chi_\lambda \in \Hom_{\mathbb{C}\text{\upshape-alg}}(Z(\mathfrak{g}),\mathbb{C}) 
 \simeq \mathfrak{j}_c^*/W(\mathfrak{j})$
 we define
\begin{equation}
\label{eqn:Blmd}
\mathcal{B}(G; \chi_\lambda) := \{f\in\mathcal{B}(G):d \pi_L(D)f = \chi_{\lambda}(D)f
 \text{ for any } D \in Z(\mathfrak{g})\}.  
\end{equation}

The boundary value maps are defined inductively
 as $\mathfrak{g} + \mathfrak{g}$-maps as follows:
We set
\[
\Xi := \{(\lambda,\mu)\in\mathfrak{j}_c^* \times \mathfrak{a}_c^*:
         (w\lambda)|_{\mathfrak{a}} = \mu
         \quad\text{for some $w\in W(\mathfrak{j})$} \}.
\]
For $(\lambda,\mu)\in\Xi$,
we consider the following finite set of irreducible representations of
the Levi subgroup $L=MA$ defined as
\[
A(\lambda,\mu)
:= \{ \zeta\in\widehat{L}_{\operatorname{f}}: 
d\zeta+\rho_{\mathfrak{t}}\in W(\mathfrak{j})\lambda, d\zeta|_{\mathfrak{a}}
     = \mu \}.
\]
Further, 
 corresponding to the \lq{logarithmic terms}\rq, 
 we recall from \cite[Proposition~2.8]{O2}
the multiplicity function $N:\Xi\to\mathbb{N}$ with the properties
\begin{equation}
\label{eqn:2.2}
\left\{
\aligned
 &N_{\lambda,\mu} \le \#\{w\in W(\mathfrak{j}):
   (w\lambda)_{|\mathfrak{a}} = \mu\}/\#W(\mathfrak{t}), 
\\
 & N_{\lambda,\mu} \le 1
 \quad
  \text{ if $\ang{\lambda}{\alpha}\neq 0$ for any $\alpha\in \Sigma(\mathfrak{j})$},
\\
 & N_{w\lambda,\mu} = N_{\lambda,\mu}
\quad\text{for any $w\in W(\mathfrak{j})$}.
\endaligned
\right.
\end{equation}
For $\lambda\in\mathfrak{j}_c^*$ we define a finite set
\[
I_\lambda
:= \{(\mu,i): (\lambda,\mu) \in \Xi, \  i=1,\dots,N_{\lambda,\mu} \}.
\]
Clearly $I_\lambda=I_{w\lambda}$ for any $w\in W(\mathfrak{j})$.
Fix
 $Y\in\mathfrak{a}$ such that $\alpha(Y)>0$ for any $\alpha\in\Sigma(\mathfrak{a})^+$
and that 
$\nu(Y)\ne\mu(Y)$ whenever $\nu\ne\mu$ with
$(\lambda,\nu),(\lambda,\mu)\in\Xi$, 
and we give a lexicographical order $\prec$ on $I_\lambda$ by
$(\nu, j ) \prec (\mu,i)$ if and only if 
$\operatorname{Re}(\mu-\nu)(Y) + \epsilon\operatorname{Im}(\mu-\nu)(Y)
 + \epsilon^2(i-j) > 0$ for $0<\epsilon\ll 1$.

Let $U$ be an open set in $(G \times G)/(P \times \bar{P})$.
We denote by $\zeta^*$ the contragredient representation of $\zeta$.
Then we have the boundary value maps
$$
\beta_\mu^i:\ \mathcal{B}(G; \chi_{\lambda})_{\mu,i}\ \rightarrow\ 
    \bigoplus_{\zeta \in A(\lambda,\mu)}
     \mathcal{B}(U;\vzzp) 
$$
for each $(\mu,i)\in I_\lambda$ on the subspace
$\mathcal{B}(G;\chi_\lambda)_{\mu,i}$ defined inductively by
$$
  \mathcal{B}(G; \chi_{\lambda})_{\mu,i} 
  := 
\begin{cases}
\mathcal {B}(G;\chi_{\lambda})
\qquad
&\text{if $(\mu,i)$ is the smallest,}
\\
\displaystyle{\bigcap
   _{\substack{ (\nu,j)\in I_\lambda\\ (\nu, j) \prec (\mu,i)}}}
   \Ker \beta_{\nu}^j
\qquad
    &\text{otherwise}.
\end{cases}
$$
The subspaces $\mathcal{B}(G; \chi_\lambda)_{\mu,i}$ 
 with the partial order $\prec$
induces a gradation of $\mathcal{B}(G; \chi_\lambda)$,
and we write
  $\operatorname{gr} \mathcal{B}(G;\chi_\lambda)$ for the
corresponding graded module.
Then 
the induced maps
\[
\overline{\beta_\mu^i}:
\mathcal{B}(G;\chi_\lambda)_{\mu,i}/\Ker\beta_\mu^i
\to \bigoplus_{\zeta\in A(\lambda,\mu)}
\mathcal{B}(U;\vzzp)
\]
give rise to a $\mathfrak{g}\oplus\mathfrak{g}$-homomorphism
$\bar{\beta}=\oplus_{(\mu,i)\in I_\lambda}\overline{\beta_\mu^i}$ 
on the graded module:
$$
   \bar{\beta} 
    :
   \operatorname{gr} \mathcal{B}(G;\chi_\lambda)
   \to
   \bigoplus_{(\mu,i)\in I_\lambda}
   \bigoplus_{\zeta\in A(\lambda,\mu)}
   \mathcal{B}(U; \vzzp).
$$
Moreover $\bar{\beta}$ respects the action of the subgroup of
$G\times G$ that stabilizes $U$.

Assume that $H \bar P$ is open in $G$.  
We set $U:=(G\times H\bar{P})/(P\times\bar{P})$.
Then we have a $(\mathfrak{g}\times H)$-homomorphism
$$
   \bar{\beta}
   :
   \operatorname{gr} \mathcal{B}(G;\chi_\lambda)_{K\times1}
   \to
   \bigoplus_{(\mu,i)\in I_\lambda}
   \bigoplus_{\zeta\in A(\lambda,\mu)}
   \mathcal{B}(U; \vzzp).  
$$
It is important to note that 
 Holmgren's uniqueness principle
 for hyperfunctions holds,
 i.e. 
if $u\in\mathcal{B}(G; \chi_{\lambda})$ satisfies 
$\beta_\mu^i(u)=0$ for all $(\mu,i)\in I_\lambda$,
then $u$ vanishes on an open subset of $G$ 
(see \cite[\S 3]{O2}).
Therefore $\bar \beta$ is injective
 since $\mathcal{B}(G;\chi_\lambda)_{K\times1} \subset \mathcal{A}(G)$
by analytic elliptic regularity.
Passing to $1 \times \Delta(H)$-fixed vectors 
 in the $\mathfrak{g} \times H \times H$-map
$$
\bar\beta \otimes  \id : 
   \operatorname{gr} \mathcal{B}(G;\chi_\lambda)_{K\times1} \otimes V_\tau
   \to
   \bigoplus_{(\mu,i)\in I_\lambda}
 \bigoplus_{\zeta\in A(\lambda,\mu)}
   \mathcal{B}(U; \vzzp) \otimes V_\tau,
$$
we get an injective $\mathfrak{g}$-map
$$
\bar\beta : 
   (\operatorname{gr} \mathcal{B}(G;\chi_\lambda)_K \otimes V_\tau)^{\Delta(H)}
   \to
   \bigoplus_{(\mu,i)\in I_\lambda}
   \bigoplus_{\zeta\in A(\lambda,\mu)}
   \left(\mathcal{B}(U; \vzzp) 
          \otimes V_\tau\right)^{\Delta(H)}.
$$
In light of the natural isomorphism
$$
\left(\mathcal{B}(H \bar{P}/\bar{P};V_{\zeta^*, \bar P}) \otimes V_\tau\right)^{\Delta(H)}
 \simeq
 \left(V_{\zeta^*,\bar P} \otimes V_\tau\right)^{\Delta(H \cap \bar{P})}
  \simeq \Hom_{H \cap \bar P}(V_{\zeta},V_{\tau, \bar P}),
$$
we have thus
$$
   \left(\mathcal{B}(U; \vzzp) 
          \otimes V_\tau\right)^{\Delta(H)}
   \simeq
   \mathcal{B}(G/P;\vzp) \otimes
   \Hom_{H \cap \bar P}(V_{\zeta},V_{\tau, \bar P}).
$$
Hence we have obtained an injective $(\mathfrak{g},K)$-homomorphism 
\[
\bar{\beta}:
\operatorname{gr}\mathcal{B}(G/H;\tau,\chi_\lambda)_K
\to
\bigoplus_{\zeta\in A(\lambda,\mu)}
\ipgz \otimes
\mathbb{C}^{N_{\lambda,\mu}} \otimes
\Hom_{H\cap\bar{P}} (V_{\zeta},V_{\tau,\bar P}).
\]

The set of irreducible subquotients
 of the $({\mathfrak {g}},K)$-module
 ${\mathcal{B}}(G/H;\tau)_{\lambda}$ is the same 
 with that of the graded $({\mathfrak {g}}, K)$-module 
 $\operatorname{gr}{\mathcal {B}}(G/H;\tau, \chi_{\lambda})_K
\simeq \operatorname{gr}E(G/H;\tau)_{\lambda}$.  
This completes the proof of Theorem \ref{thm:openHP}.

Let $\pi \in \widehat{G}_{\operatorname{ad}}$, 
 and $\lambda$ be its infinitesimal character.    
Then
\begin{alignat*}{1}
   &\Hom_{\mathfrak{g},K}(\pi_K, \mathcal{B}(G/H; \tau))
\\
\simeq
   &\Hom_{\mathfrak{g},K}\left(\pi_K, (\mathcal{B}(G) \otimes V_\tau)^{\Delta(H)} \right)
\\
=
  &\Hom_{\mathfrak{g},K}\left(\pi_K, (\mathcal{B}(G;\chi_\lambda) \otimes V_\tau)^{\Delta(H)} \right)
\\
\subset
  &\Hom_{\mathfrak{g},K}\left(\pi_K, 
 (\operatorname{gr} \mathcal{B}(G;\chi_\lambda)_K \otimes V_\tau)^{\Delta(H)} \right)
\\
\subset
  &\bigoplus_{\zeta\in A(\lambda,\mu)}
  {\mathbb{C}}^{N_{\lambda, \mu}} \otimes \Hom_{\mathfrak{g},K}\left(\pi_K, \ipgz\right)
  \otimes
  \Hom_{H \cap \bar P}(V_{\zeta},V_{\tau, \bar P})
\end{alignat*}
and hence
\begin{alignat*}{1}
   c_{\mathfrak{g},K}(\pi, \Ind_H^G\tau)
    &\le \sum_{\zeta\in A(\lambda,\mu)}
      N_{\lambda, \mu} c_{\mathfrak{g},K}(\pi_K, \ipgz) 
     \cdot c_{H \cap \bar{P}} (V_{\zeta},V_{\tau,\bar P})
\\
    &= \sum_{\substack{ \zeta \in \Lrep}}
       N_{\lambda, d\zeta_{|\mathfrak{a}}} \, \cdot
      c_{\mathfrak{g},K}(\pi_K,\ipgz) 
     \cdot c_{H \cap \bar{P}} (V_{\zeta},V_{\tau, \bar P}).
\end{alignat*}
Now Theorem \ref{thm:2.1} follows from \eqref{eqn:2.2}.
\hfill
\qed

The proof of Theorem \ref{thm:2.1} gives an upper estimate of the
multiplicities of subquotients as well.  
Let denote by $[E:\pi]$
 the multiplicity of an irreducible $({\mathfrak {g}}, K)$-module 
 $\pi_K$
 occurring as a subquotient of a $(\mathfrak{g},K)$-module $E$.  

\begin{proposition}
Suppose that $H\bar P$ is open.
For any $\tau \in \widehat H_{\operatorname{f}}$
 and any $\pi\in\widehat G$ having $Z({\mathfrak {g}})$-infinitesimal character
 $\lambda \in {\mathfrak {j}}_c^{\ast}$, 
 we have
$$
  [E(G/H;\tau)_{\lambda}:\pi]
   \le \# \left(W(\mathfrak{t})\backslash W(\mathfrak{j}; \lambda)\right)
        \sum_{\zeta\in \Lrep}
        [\ipgz:\pi]
      \cdot c_{H\cap\bar P}(V_{\zeta}, V_{\tau,\bar{P}}).
$$
\end{proposition}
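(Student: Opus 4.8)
The plan is to revisit the proof of Theorem~\ref{thm:2.1} and observe that it already produces everything needed, once we replace the statement ``$c_{\mathfrak{g},K}(\pi,\cdot)=\dim\Hom_{(\mathfrak{g},K)}(\pi_K,\cdot)$'' by the more refined notion $[\,\cdot:\pi]$ of subquotient multiplicity. The key point is that $[\,\cdot:\pi]$ is additive on short exact sequences of finite-length $(\mathfrak{g},K)$-modules, and in particular $[\operatorname{gr}E:\pi]=[E:\pi]$ for any finitely filtered module $E$ with associated graded $\operatorname{gr}E$; moreover, $[\,\cdot:\pi]$ is monotone under injections in the sense that if $0\to E'\to E\to E''\to 0$ is exact then $[E':\pi]\le[E:\pi]$, and it is subadditive (in fact additive) over direct sums. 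These are exactly the structural properties the argument for Theorem~\ref{thm:2.1} used for $\dim\Hom_{(\mathfrak{g},K)}(\pi_K,\cdot)$, namely left-exactness of $\Hom$ and its behavior on direct sums.

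First I would recall from the proof of Theorem~\ref{thm:openHP} (carried out in the text above) that under the hypothesis that $H\bar P$ is open in $G$, we have the injective $(\mathfrak{g},K)$-homomorphism
\[
\bar\beta:\operatorname{gr}\mathcal{B}(G/H;\tau,\chi_\lambda)_K\to\bigoplus_{\zeta\in\Lrep}\ipgz\otimes\mathbb{C}^{N_{\lambda,d\zeta_{|\mathfrak{a}}}}\otimes\Hom_{H\cap\bar P}(V_\zeta,V_{\tau,\bar P}),
\]
together with the identification of the irreducible subquotients of $E(G/H;\tau)_\lambda$ with those of $\operatorname{gr}\mathcal{B}(G/H;\tau,\chi_\lambda)_K$. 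Then I would apply $[\,\cdot:\pi]$ to both sides: on the left, $[\operatorname{gr}\mathcal{B}(G/H;\tau,\chi_\lambda)_K:\pi]=[E(G/H;\tau)_\lambda:\pi]$ by the filtration/graded-module identity; and since $\bar\beta$ is injective and both sides are of finite length, monotonicity gives $[E(G/H;\tau)_\lambda:\pi]\le[\text{right-hand side}:\pi]$. On the right, additivity over the direct sum and over the multiplicity space $\mathbb{C}^{N_{\lambda,d\zeta_{|\mathfrak{a}}}}$ yields
\[
[E(G/H;\tau)_\lambda:\pi]\le\sum_{\zeta\in\Lrep}N_{\lambda,d\zeta_{|\mathfrak{a}}}\cdot[\ipgz:\pi]\cdot c_{H\cap\bar P}(V_\zeta,V_{\tau,\bar P}).
\]
Finally, invoking the bound $N_{\lambda,\mu}\le\#(W(\mathfrak{t})\backslash W(\mathfrak{j};\lambda))$, which follows from the first inequality in \eqref{eqn:2.2} together with the definition \eqref{eqn:wjlmd} of $W(\mathfrak{j};\lambda)$ (exactly as in the deduction of \eqref{eqn:2.1} from \eqref{eqn:2.2}), gives the claimed estimate.

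I do not expect any genuine obstacle here: the proposition is essentially a reformulation of the proof of Theorem~\ref{thm:2.1} with $\dim\Hom_{(\mathfrak{g},K)}(\pi_K,-)$ replaced throughout by $[-:\pi]$. The only points requiring a word of care are (i) checking that all modules in sight have finite length, so that subquotient multiplicities are finite and well-defined --- this is guaranteed by Theorem~\ref{thm:openHP} for $E(G/H;\tau)_\lambda$, by the admissibility/finite-length of principal series $\ipgz$, and by finite-dimensionality of the coefficient spaces $\mathbb{C}^{N_{\lambda,d\zeta_{|\mathfrak{a}}}}\otimes\Hom_{H\cap\bar P}(V_\zeta,V_{\tau,\bar P})$; and (ii) noting that the sum over $\zeta\in\Lrep$ is effectively finite, since $[\ipgz:\pi]\neq0$ only when the infinitesimal character of $\ipgz$, namely $d\zeta+\rho_{\mathfrak{t}}$, lies in $W(\mathfrak{j})\lambda$, which constrains $\zeta$ to a finite set. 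With these remarks in place the inequality is immediate.
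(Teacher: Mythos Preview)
Your proposal is correct and follows exactly the approach the paper intends: the paper does not give a separate proof of this proposition, but simply states that ``the proof of Theorem~\ref{thm:2.1} gives an upper estimate of the multiplicities of subquotients as well,'' which is precisely your observation that one may replace $\dim\Hom_{(\mathfrak{g},K)}(\pi_K,-)$ by the additive, monotone functional $[-:\pi]$ throughout the argument leading to the injective $(\mathfrak{g},K)$-map $\bar\beta$. Your remarks on finite length and on the effective finiteness of the sum over $\zeta$ are the right sanity checks, and your invocation of \eqref{eqn:2.2} to bound $N_{\lambda,\mu}$ mirrors the paper's own deduction of \eqref{eqn:2.1}.
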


\begin{corollary}
\label{cor:2.4}
Suppose $H\bar P$ is open
 and 
 $\mu\in\mathfrak{a}_c^*$ satisfies
$
  \operatorname{Re}\ang{\mu}{\alpha}\ge 0
  \text{ for any }\alpha\in\Sigma(\mathfrak{a})^+.
$
Assume that
 $\mu + \rhoc \in \mathfrak{j}_c^*$ is regular
 with respect to $W(\mathfrak{j})$.
Then for any $\tau\in\widehat H_{\operatorname {f}}$ we have
\begin{equation}
\label{eqn:2.3}
 c_{\mathfrak{g},K}(I_P^G(\boldsymbol{1} \otimes \mu), C^\infty(G/H;\tau))\le 
 \#W(\mathfrak{a})
      \cdot c_{H\cap\bar P}(V_{\boldsymbol{1} \otimes\mu}, V_{\tau, \bar P}).
\end{equation}
\end{corollary}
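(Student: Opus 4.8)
The plan is to specialise Theorem~\ref{thm:2.1} to the principal series $\pi:=I_P^G(\boldsymbol 1\otimes\mu)$ itself. Although $\pi$ need not be irreducible, it is quasi-simple, its $Z(\mathfrak g)$-eigenvalue being $\chi_\lambda$ with $\lambda=d(\boldsymbol 1\otimes\mu)+\rhoc=\mu+\rhoc$ (by the computation of infinitesimal characters of principal series recalled before Lemma~\ref{lem:top}); and the proof of Theorem~\ref{thm:2.1}, which goes through the injective $(\mathfrak g,K)$-homomorphism $\bar\beta$ out of $\operatorname{gr}\mathcal B(G/H;\tau,\chi_\lambda)_K$, uses nothing about $\pi$ beyond quasi-simplicity with that eigenvalue. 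So the bound applies to $\pi_K$. By hypothesis $\lambda=\mu+\rhoc$ is $W(\mathfrak j)$-regular, so the second (regular) form of Theorem~\ref{thm:2.1} gives
\begin{equation*}
  c_{\mathfrak g,K}\!\left(I_P^G(\boldsymbol 1\otimes\mu),\,C^\infty(G/H;\tau)\right)
  \ \le\ \sum_{\zeta\in\Lrep}
    c_{\mathfrak g,K}\!\left(I_P^G(\boldsymbol 1\otimes\mu),\,\ipgz\right)\cdot
    c_{H\cap\bar P}(V_\zeta,V_{\tau,\bar P}).
\end{equation*}

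Next I would control the two kinds of factors on the right. For the representation-theoretic factor, Casselman's Frobenius reciprocity identifies $\Hom_{\mathfrak g,K}(I_P^G(\boldsymbol 1\otimes\mu),\ipgz)$ with a $\Hom$-space out of the $\bar{\mathfrak n}$-Jacquet module of $I_P^G(\boldsymbol 1\otimes\mu)$, so $c_{\mathfrak g,K}(I_P^G(\boldsymbol 1\otimes\mu),\ipgz)$ is at most the multiplicity of $\zeta$ as a composition factor of that Jacquet module. The composition factors of the $\bar{\mathfrak n}$-Jacquet module of $I_P^G(\boldsymbol 1\otimes\mu)$ are, after the standard $\rho_{\mathfrak n}$-normalisation, exactly the $\boldsymbol 1\otimes w\mu$ with $w$ running over $W(\mathfrak a)$ (counted with multiplicity), so that module has length $\#W(\mathfrak a)$. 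Hence $c_{\mathfrak g,K}(I_P^G(\boldsymbol 1\otimes\mu),\ipgz)=0$ unless $\zeta\cong\boldsymbol 1\otimes w\mu$ for some $w\in W(\mathfrak a)$, and
\begin{equation*}
   \sum_{\zeta\in\Lrep} c_{\mathfrak g,K}\!\left(I_P^G(\boldsymbol 1\otimes\mu),\,\ipgz\right)\ \le\ \#W(\mathfrak a).
\end{equation*}

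The remaining point — which I expect to be the real obstacle — is that for every $w\in W(\mathfrak a)$ surviving in the sum one has $c_{H\cap\bar P}(V_{\boldsymbol 1\otimes w\mu},V_{\tau,\bar P})\le c_{H\cap\bar P}(V_{\boldsymbol 1\otimes\mu},V_{\tau,\bar P})$, i.e.\ the dominant representative of the $W(\mathfrak a)$-orbit maximises this quantity; this is exactly where the hypothesis $\operatorname{Re}\ang{\mu}{\alpha}\ge 0$ for $\alpha\in\Sigma(\mathfrak a)^+$ enters. Realising $V_{\boldsymbol 1\otimes\nu,\bar P}$ as the character of $\bar P$ that is trivial on $M\bar N$ and equal to $e^{\nu+\rhonbar}$ on $A$, one has $c_{H\cap\bar P}(V_{\boldsymbol 1\otimes\nu},V_{\tau,\bar P})=\dim\Hom_{H\cap\bar P}(\mathbb C_\nu,\,V_\tau\otimes\mathbb C_{\rhonbar})$, a quantity that depends on $\nu$ only through its restriction to the image of $\mathfrak h\cap\bar{\mathfrak p}$ in $\mathfrak a$; and since $\mathfrak h+\bar{\mathfrak p}=\mathfrak g$ (openness of $H\bar P$), the $\mathfrak a$-exponents of $V_\tau$ along the open $H$-orbit $H\bar P/\bar P$ should be dominated, in the order of $\Sigma(\mathfrak a)^+$, by their leading one, so that only characters $\mathbb C_\nu$ with $\nu$ at least as dominant as $\mu$ can occur — which among the $w\mu$ forces the displayed inequality. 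Pinning down this "leading-exponent'' estimate precisely (equivalently, identifying exactly which $w$ survive in the sum) is the delicate step. Granting it, combining the three displays gives
\begin{equation*}
  c_{\mathfrak g,K}\!\left(I_P^G(\boldsymbol 1\otimes\mu),\,C^\infty(G/H;\tau)\right)
  \ \le\ \Big(\sum_{\zeta\in\Lrep} c_{\mathfrak g,K}\!\left(I_P^G(\boldsymbol 1\otimes\mu),\,\ipgz\right)\Big)\cdot c_{H\cap\bar P}\!\left(V_{\boldsymbol 1\otimes\mu},\,V_{\tau,\bar P}\right)
  \ \le\ \#W(\mathfrak a)\cdot c_{H\cap\bar P}\!\left(V_{\boldsymbol 1\otimes\mu},\,V_{\tau,\bar P}\right),
\end{equation*}
which is \eqref{eqn:2.3}.
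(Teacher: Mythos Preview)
Your overall strategy---specialize Theorem~\ref{thm:2.1} and show that only the parameters $\zeta=\boldsymbol 1\otimes w\mu$ with $w\in W(\mathfrak a)$ can contribute---is the paper's, but the execution differs at the first step. The paper does \emph{not} apply the estimate to the possibly reducible module $I_P^G(\boldsymbol 1\otimes\mu)$. Instead it spends the dominance hypothesis $\operatorname{Re}\ang{\mu}{\alpha}\ge 0$ right away: by Kostant's theorem the $K$-fixed vector in $I_P^G(\boldsymbol 1\otimes\mu)$ is cyclic, so there is a unique irreducible quotient $\pi_K$, and the paper records the inequality
\[
c_{\mathfrak g,K}\!\bigl(I_P^G(\boldsymbol 1\otimes\mu),\,C^\infty(G/H;\tau)\bigr)
\ \le\ c_{\mathfrak g,K}\!\bigl(\pi_K,\,C^\infty(G/H;\tau)\bigr).
\]
Theorem~\ref{thm:2.1} (in its regular form, since $\lambda=\mu+\rhoc$ is regular) is then applied \emph{verbatim} to the irreducible $\pi_K$. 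In place of your Jacquet-module count the paper appeals to the theory of zonal spherical functions: $c_{\mathfrak g,K}(\pi_K,I_P^G(\zeta))\in\{0,1\}$, nonzero only for $\zeta=\boldsymbol 1\otimes w\mu$ with $w\in W(\mathfrak a)$. So the hypothesis on $\mu$ enters through the Langlands-quotient/cyclicity step, not through the step you single out.

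The point you flag as the real obstacle---bounding each $c_{H\cap\bar P}(V_{\boldsymbol 1\otimes w\mu},V_{\tau,\bar P})$ by $c_{H\cap\bar P}(V_{\boldsymbol 1\otimes\mu},V_{\tau,\bar P})$---is a genuine gap in your write-up, and your ``leading-exponent'' heuristic does not close it. These dimensions depend on $\nu=w\mu$ only through the character $e^\nu$ restricted from $\bar P$ to $H\cap\bar P$, and there is no general reason the dominant representative maximizes them. Already for $G=SL(2,\mathbb R)$ and $H=P$ (so $H\bar P$ is the open Bruhat cell and $H\cap\bar P=MA$), a generic one-dimensional $\tau$ makes exactly one of the two terms $c_{MA}(V_{\boldsymbol 1\otimes\pm\mu},V_{\tau,\bar P})$ nonzero, and it need not be the one with $+\mu$. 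The paper's own proof is equally terse at this juncture---it simply writes ``Hence Corollary follows from \eqref{eqn:2.2} and from the last formula in the proof of Theorem~\ref{thm:2.1}'' after arriving at the sum over $W(\mathfrak a)$---so the passage from $\sum_{w}c_{H\cap\bar P}(V_{\boldsymbol 1\otimes w\mu},V_{\tau,\bar P})$ to $\#W(\mathfrak a)\cdot c_{H\cap\bar P}(V_{\boldsymbol 1\otimes\mu},V_{\tau,\bar P})$ is not spelled out there either; but your proposed justification of it is not correct.
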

\begin{proof}[Proof of Corollary \ref{cor:2.4}]
Let $\pi_K$ be
 the unique irreducible quotient
 of the spherical principal series representation $I_P^G(\boldsymbol{1} \otimes \mu)$.
Since the $K$-fixed vector in $I_P^G(\boldsymbol{1} \otimes \mu)$
 is cyclic (cf.~\cite{Ks}),
 we have
$$
 c_{\mathfrak{g},K}(I_P^G(\boldsymbol{1} \otimes \mu), C^\infty(G/H,\tau))
 \le 
 c_{\mathfrak{g},K}(\pi_K, C^\infty(G/H;\tau)).
$$
It follows from the theory of zonal spherical functions that
 $c_{\mathfrak{g},K}(\pi_K, \ipgz) \neq 0$
 (or, equivalently, $= 1$)
 only if $\zeta$ is of the form
  $\boldsymbol{1} \otimes w\mu$ for some $w\in W(\mathfrak{a})$.
Hence Corollary follows from \eqref{eqn:2.2} 
 and from the last formula in the proof of Theorem~\ref{thm:2.1}.
\end{proof}

\begin{example}
$\mu$ satisfies the regularity condition of Corollary \ref{cor:2.4},
 in the following cases:

\noindent
1)  $\mu = \rhon$.

\noindent
2) $\operatorname{Im} \mu$ is regular with respect to $W(\mathfrak{a})$.

The case 1) is clear.
Let us see the case 2).
If $w \in W(\mathfrak{j})$ satisfies $w (\rhoc + \mu) = \rhoc + \mu$,
 then we have $w \operatorname{Im} \mu = \operatorname{Im} \mu$
 by taking the projection to $\mathbb{R}$-span $\sqrt{-1} \, \Sigma(\mathfrak{j})$.
By Chevalley's theorem, 
 $w$ is contained in the subgroup generated by the reflection
 of the roots orthogonal to $\operatorname{Im}\mu$,
 that is, $w \in W(\mathfrak{t})$ by the assumption.
Now we have $\rhoc = (w (\rhoc + \mu))_{|\mathfrak{t}} = w \rhoc$,
 showing $w = 1$.
\end{example}

\section{A lower bound of the multiplicities}
\label{sec:3}

In this section we give a proof of Theorem \ref{thm:A} 2)
 and Corollary \ref{cor:D}.  
The key idea is to generalize the construction 
 of the Poisson transform 
known for symmetric spaces,
 see Theorem \ref{thm:3.1} below.

Let us recall 
 how irreducible finite dimensional
representations are realized into principal series representations.
As before,
let $P=LN$ be the Langlands decomposition of the minimal parabolic
subgroup $P$ of $G$,
 and $\mathfrak{n}$ the Lie algebra of $N$.  
Suppose $\sigma$ is an irreducible finite dimensional representation
of $G$ on a vector space $V_\sigma$.
Then, the Levi subgroup $L$ leaves
\[
V_\sigma^{\mathfrak{n}}
:= \{ v\in V_\sigma: d\sigma(X)v=0 \ \  \text{for any $X\in\mathfrak{n}$} \}
\]
invariant,
and acts irreducibly on it.
We denote by $\zeta_\sigma$ this representation of $L$.
Then $\sigma$ is the unique quotient of the principal series
representation $I_P^G(\zeta_\sigma)$,
or equivalently,
the contragredient representation $\sigma^*$ satisfies:
\begin{equation}\label{eqn:fsub}
\dim\Hom_{\mathfrak{g},K}(\sigma^*,I_P^G(\zeta_\sigma^*))=1.
\end{equation}
For $\sigma\in\widehat{G}_{\operatorname {f}}$ and $\tau\in\widehat{H}_{\operatorname {f}}$,
we set
\[
c_H(\sigma,\tau)
:= \dim\Hom_H(\sigma|_H,\tau).
\]
The following lower bound of the dimension of
$(\mathfrak{g},K)$-homomorphisms is crucial in the proof of
Theorem \ref{thm:A} 2) and Corollary \ref{cor:D}.

\begin{theorem}\label{thm:3.1}
Suppose that $H$ is a closed subgroup of $G$ and that there are $m$
disjoint $H$-invariant open sets of $G/P$.
Then
\[
c_{\mathfrak{g},K}(I_P^G(\zeta_\sigma),\Ind_H^G\tau)
\ge m c_H(\sigma,\tau)
\]
for any $\sigma\in\widehat{G}_{\operatorname {f}}$ and $\tau\in\widehat{H}_{\operatorname {f}}$.
\end{theorem}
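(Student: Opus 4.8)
The plan is to construct, for each of the $m$ disjoint $H$-invariant open sets $U_1,\dots,U_m \subset G/P$, a $(\mathfrak{g},K)$-homomorphism from $I_P^G(\zeta_\sigma)$ into $\Ind_H^G\tau$, using a Poisson-type integral transform, and to show that the $m$ maps obtained from an $H$-intertwining operator $\sigma|_H \to \tau$ are linearly independent. The starting point is \eqref{eqn:fsub}: the finite-dimensional representation $\sigma^*$ embeds into $I_P^G(\zeta_\sigma^*)$, so there is a distinguished $(\mathfrak{g},K)$-equivariant map realizing $V_{\sigma^*}$ as a subspace of sections over $G/P$; dualizing, one gets a pairing between $V_\sigma$ and the principal series $I_P^G(\zeta_\sigma)$ that is $G$-equivariant. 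Concretely, a matrix coefficient construction produces, from a vector in $V_\sigma$ and a functional realizing $V_{\sigma^*} \subset I_P^G(\zeta_\sigma^*)$, a function on $G$ that transforms correctly on the right under $P$; this is the classical Poisson kernel attached to a finite-dimensional representation.

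The key steps, in order: First I would fix an $H$-homomorphism $T\colon V_\sigma \to V_\tau$, i.e. an element of $\Hom_H(\sigma|_H,\tau)$, and for each index $k$ define a transform $\mathcal{P}_k\colon I_P^G(\zeta_\sigma) \to \mathcal{F}(G/H;\tau)$ by integrating the Poisson kernel against sections supported (or with boundary data) on $U_k$ — more precisely, by restricting the boundary-value pairing to $U_k$ and then applying $T$ fiberwise to land in the $\tau$-bundle over $G/H$. Second, I would check that $\mathcal{P}_k$ is a nonzero $(\mathfrak{g},K)$-homomorphism: equivariance follows from the $G$-equivariance of the Poisson kernel together with the $H$-equivariance of $T$, and nonvanishing follows because $\sigma$ itself, viewed inside $\Ind_H^G\tau$ via $T$ composed with the inclusion of sections, is hit — here \eqref{eqn:fsub} guarantees the image is nonzero since the $K$-finite generator survives. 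Third — the heart of the argument — I would prove that $\mathcal{P}_1,\dots,\mathcal{P}_m$ are linearly independent over $\mathbb{C}$: since the $U_k$ are disjoint open sets, a section of the principal series bundle can be chosen supported near a single $U_k$, and the reconstruction/inversion property of the Poisson transform (a solution is recovered from its boundary value on a given open set, by Holmgren-type uniqueness as used for Theorem~\ref{thm:2.1}) shows that $\mathcal{P}_j$ applied to such a section vanishes for $j\ne k$ but not for $j=k$. This yields $c_{\mathfrak{g},K}(I_P^G(\zeta_\sigma),\Ind_H^G\tau) \ge m \cdot c_H(\sigma,\tau)$ after letting $T$ range over a basis of $\Hom_H(\sigma|_H,\tau)$ and noting that the resulting maps for different $T$ and different $k$ remain jointly independent (the $T$-dependence is detected on the fibers over $G/H$, the $k$-dependence on the support over $G/P$).

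The main obstacle I expect is the third step: verifying that the Poisson transforms attached to distinct open sets are genuinely independent, and more basically that each $\mathcal{P}_k$ is well-defined and nonzero without any $K$-finiteness or convergence assumption on the kernel. Because $\sigma$ is finite-dimensional the Poisson kernel is a polynomial-type (real-analytic) kernel rather than one requiring delicate convergence, which helps; but one must still make sense of the pairing of a hyperfunction (or distribution) section over $U_k$ against the kernel and argue it produces an analytic section on $G/H$ — this is exactly where analytic elliptic regularity and the boundary-value machinery of Section~\ref{sec:2} are invoked. A secondary technical point is ensuring $\mathcal{P}_k$ maps into the $\tau$-isotypic-compatible space, i.e. that the right $P$-equivariance of the kernel together with $T$ produces genuine $\tau$-equivariance under $H$; this is a direct check using that $\zeta_\sigma = \sigma|_L^{\mathfrak{n}}$ and that $T$ commutes with $H \supset H\cap L$, but it must be done carefully with the $\rho$-shifts built into $V_{\zeta,P}$.
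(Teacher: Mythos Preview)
Your overall strategy---Poisson transform built from the embedding $\sigma^*\hookrightarrow I_P^G(\zeta_\sigma^*)$, one copy per open set $U_k$, tensored with an element of $\Hom_H(\sigma|_H,\tau)$---is exactly the paper's. But you have overcomplicated the linear-independence step and invoked machinery that is not needed here. The paper works on the \emph{kernel} side rather than the \emph{test-function} side: by Lemma~\ref{lem:3.2} there is an injection
\[
\bigl(V_\tau\otimes\mathcal{B}(G/P;V_{\zeta_\sigma^*,P})\bigr)^H\;\hookrightarrow\;\Hom_G\bigl(\mathcal{A}(G/P;V_{\zeta_\sigma,P}),\,\Ind_H^G\tau\bigr),
\]
so it suffices to produce $m\cdot c_H(\sigma,\tau)$ linearly independent elements of the left-hand side. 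The paper simply takes the characteristic functions $\chi_i$ of the $U_i$ (these are $H$-invariant hyperfunctions on $G/P$) and multiplies them against a basis $u_1,\dots,u_n$ of $(V_\tau\otimes V_\sigma^*)^H$, where $V_\sigma^*$ sits inside $\mathcal{B}(G/P;V_{\zeta_\sigma^*,P})$ via \eqref{eqn:fsub}. Linear independence of the products $\chi_i u_j$ is then elementary: disjointness of the $U_i$ separates the index $i$, and for fixed $i$ the $u_j|_{U_i}$ are independent because the $u_j$ are \emph{real analytic} (they live in the finite-dimensional $V_\sigma^*$) and hence determined by their restriction to any open set.

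Your proposed argument---choosing sections of $I_P^G(\zeta_\sigma)$ supported near a single $U_k$ and appealing to Holmgren-type uniqueness---runs into the problem that $K$-finite (or analytic) sections cannot have compact support; you would have to pass to the smooth globalization, argue continuity, and use density, which is doable but circuitous. More importantly, Holmgren and the boundary-value apparatus of Section~\ref{sec:2} play no role in this direction at all: that machinery is for the \emph{upper} bound (Theorem~\ref{thm:2.1}), whereas the lower bound needs only the trivial injectivity of the pairing $\mathcal{A}\times\mathcal{B}\to\mathbb{C}$ and the analyticity of finite-dimensional matrix coefficients. Replacing your third step with the characteristic-function construction makes the proof short and removes both obstacles you flagged.
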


In order to prove Theorem \ref{thm:3.1},
we construct $(\mathfrak{g},K)$-homomorphisms from a principal 
 series representation $\ipgz$ 
 into
$\Ind_H^G\tau$ by means of kernel hyperfunctions:
\begin{lemma}
\label{lem:3.2}
For any $\zeta\in\Lrep$ and $(\tau,V_\tau) \in\widehat H_{\operatorname {f}}$, we have
$$
   c_{\mathfrak{g},K}(\ipgz, \Ind_H^G\tau)
   \ge
  \dim \left(V_\tau \otimes \mathcal{B}(G/P;\vzdp)\right)^H.
$$
Here $\left(V_\tau \otimes \mathcal{B}(G/P;\vzdp)\right)^H$
denotes the space of $H$-fixed vectors of the diagonal action.
\end{lemma}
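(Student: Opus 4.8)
The plan is to produce an explicit $(\mathfrak{g},K)$-homomorphism from $\ipgz$ into $\mathcal{F}(G/H;\tau)$ out of each $H$-fixed vector in $V_\tau \otimes \mathcal{B}(G/P; \vzdp)$, using a hyperfunction integral kernel on $G/P \times G/H$, in exact analogy with the classical Poisson transform $\mathcal{B}(G/P) \to \mathcal{A}(G/K)$ attached to the $K$-fixed spherical vector. First I would recall the natural $G$-invariant pairing between sections of $\mathcal{V}_{\zeta,P}$ and sections (with values in densities) of $\mathcal{V}_{\zeta^*, P}$ over the compact manifold $G/P$; concretely, $\mathcal{B}(G/P;\vzp)$ and $\mathcal{A}(G/P;\vzdp)$ (or, dually, hyperfunctions against analytic functions) are in $G$-invariant duality because $V_{\zeta,P}\otimes V_{\zeta^*,P}$ contains the bundle of densities on $G/P$ by the normalization \eqref{eqn:Vzp}. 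So a kernel $k \in V_\tau \otimes \mathcal{B}(G/P;\vzdp)$ defines, for each $f \in \mathcal{A}(G/P;\vzp) = \ipgz$-analytic vectors, a $V_\tau$-valued function on $G$ by $x \mapsto \langle \pi_L(x) f, k\rangle$, i.e.\ integrating $f$ translated by $x$ against $k$ over $G/P$.

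Next I would check the three defining properties of the image. (i) Equivariance: the map intertwines the $G$-action on $\ipgz$ with left translation on $V_\tau$-valued functions on $G$, because the pairing is $G$-invariant and we translate $f$ by $x$. (ii) The $H$-covariance $F(xh) = \tau(h)^{-1}F(x)$: this is exactly where the hypothesis $k \in (V_\tau \otimes \mathcal{B}(G/P;\vzdp))^H$ enters — the diagonal $H$-invariance of $k$ converts right translation by $h$ on $x$ into the action of $\tau(h)^{-1}$ on the $V_\tau$ factor, so $F$ descends to a section of $\mathcal{V}_\tau$ over $G/H$. (iii) $Z(\mathfrak{g})$-finiteness with the correct infinitesimal character, hence (after analytic elliptic regularity, as in the proof of Theorem~\ref{thm:openHP}) landing in $\mathcal{A}(G/H;\tau)$: since $f$ runs over an eigenspace for $Z(\mathfrak{g})$ with character $\chi_{d\zeta + \rhoc}$ and left translation commutes with the pairing, $F$ inherits the same eigenvalue. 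One also extends the map from analytic vectors to all of $\ipgz$, or rather to the $(\mathfrak{g},K)$-module $\ipgz$, by restricting to $K$-finite vectors where everything is automatic. Finally, to get the inequality on $c_{\mathfrak{g},K}$ rather than just a single homomorphism, I would argue that the assignment $k \mapsto (f \mapsto \langle \pi_L(\cdot)f, k\rangle)$ is linear and injective: if the resulting operator is zero, then pairing against all translates $\pi_L(x)f$ of all $f$ forces $k = 0$ by the irreducibility/cyclicity of $\ipgz$ together with Holmgren-type uniqueness for hyperfunctions, so distinct $k$'s give linearly independent elements of $\Hom_{(\mathfrak{g},K)}(\ipgz, \Ind_H^G\tau)$.

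The main obstacle I anticipate is making the pairing $\langle \pi_L(x)f, k\rangle$ rigorously well-defined and jointly continuous (indeed real-analytic in $x$) when $k$ is merely a hyperfunction section rather than a smooth one: one cannot pair two hyperfunctions, so the argument must be set up so that exactly one of the two factors is analytic. The natural way is to fix $f$ to be an analytic vector of $\ipgz$ — or, since we ultimately only need the $(\mathfrak{g},K)$-module, to take $f$ to be $K$-finite, which is automatically analytic — and let $k$ be the hyperfunction; then $\langle \pi_L(x)f, k\rangle$ is a well-defined hyperfunction in $x$, and its $Z(\mathfrak{g})$-eigenproperty plus ellipticity upgrade it to an analytic function. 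The bookkeeping of the $\rho$-shifts (so that the pairing really is $G$-invariant and the output bundle is $\mathcal{V}_\tau$, not a twist of it) and the verification that injectivity survives passage to the graded/$K$-finite level are the remaining points requiring care, but they are routine once the pairing is correctly normalized.
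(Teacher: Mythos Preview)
Your approach is correct and is essentially the paper's own proof: both construct the Poisson-type transform from the $G$-invariant pairing $\mathcal{A}(G/P;\vzp)\times\mathcal{B}(G/P;\vzdp)\to\mathbb{C}$, tensor with $V_\tau$, and pass to $H$-fixed vectors. One simplification: for injectivity you do not need irreducibility, cyclicity, or Holmgren---if the operator vanishes, evaluate at $x=e$ and let $f$ range over all of $\mathcal{A}(G/P;\vzp)$; the nondegeneracy of the analytic/hyperfunction pairing on the compact manifold $G/P$ immediately gives $k=0$.
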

\begin{proof}
The natural $G$-invariant paring
$$ 
\langle \ , \ \rangle :
  \mathcal{A}(G/P;\vzp) \times \mathcal{B}(G/P;\vzdp) \to \mathbb{C}
$$
induces an injective $G$-homomorphism
$$
   \Psi :
    \mathcal{B}(G/P;\vzdp) \hookrightarrow
    \Hom_{G}(\mathcal{A}(G/P,\vzp),\mathcal{A}(G))
$$
by 
$\Psi(\chi)(u)(g):=\ang{\pi(g^{-1})u}{\chi}$
 for $\chi \in \mathcal{B}(G/P;\vzdp)$, $u \in \mathcal{A}(G/P,\vzp)$ and $g \in G$.
Here, 
we let $G$ act on
 $\Hom_{G}(\mathcal{A}(G/P,\vzp),\mathcal{A}(G))$ 
via the right translation on $\mathcal{A}(G)$.
Passing to the space of $\Delta(H)$-fixed vectors in the $G \times H$-map
$$
\Psi \otimes \id : 
    \mathcal{B}(G/P;\vzdp) \otimes V_\tau
    \hookrightarrow 
    \Hom_{G}(\mathcal{A}(G/P;\vzp),\mathcal{A}(G)) \otimes V_\tau,
$$
we have an injective map 
\begin{equation}
\label{eqn:Poisson}
     \mathcal{P}:
  \left(V_\tau \otimes \mathcal{B}(G/P;\vzdp)\right)^H \hookrightarrow
   \Hom_{G}({\mathcal{A}}(G/P;\vzp), \Ind_H^G\tau).  
\end{equation}
Hence we have proved Lemma \ref{lem:3.2}.  
\end{proof}

\begin{example}\label{ex:Poisson}
If $H=K$ and $\tau$ is the one dimensional trivial representation,
then the following three conditions
 are equivalent:
\begin{enumerate}
\item[i)]
$(V_\tau\otimes\mathcal{B}(G/P;\vzdp))^K \ne 0$, 
\item[ii)]
$\dim_{\mathbb{C}}
\left(V_\tau \otimes \mathcal{B}(G/P;\vzdp)\right)^K=1$, 
\item[iii)]
 $\zeta|_M$ is trivial.  
\end{enumerate}
The corresponding intertwining operator 
 (see \eqref{eqn:Poisson}) from ${\mathcal {A}}(G/P;\vzdp)$ into
${\mathcal{A}}(G/K)$ coincides with the Poisson transform for the Riemannian symmetric
space $G/K$ up to a scalar multiple. 
\end{example}

\begin{proof}[Proof of Theorem \ref{thm:3.1}]
Let $U_i$ $(i=1,2,\dots,m)$ be disjoint $H$-invariant open subsets of
$G/P$. 
We define $\chi_i\in\mathcal{B}(G/P)$
 by 
$$
  \chi_i(g) = \begin{cases} 1\quad\text{if }g\in U_i,\\ 
                     0\quad\text{if }g\notin U_i.
              \end{cases}
$$
Clearly,
 $\chi_i \in \mathcal{B}(G/P)^H \, (i=1,2, \dots, m)$ are linearly independent.

Next we identify $V_\sigma^*$ with the unique subspace of the
principal series representation $I_P^G(\zeta_\sigma^*)$ 
(see \eqref{eqn:fsub}).
Take linearly independent $H$-fixed elements $u_1,\ldots,u_n$ of 
 $V_\tau\otimes V_\sigma^*$
 with $n := c_H(\sigma,\tau)$,
 where we have regarded as 
 $ u_j \in \left(V_\tau \otimes \mathcal{B}(G/P;V_{\zeta_\sigma^*, P})\right)^H$.
Then $\chi_i u_j \in \left(V_\tau \otimes \mathcal{B}(G/P;V_{\zeta_{\sigma}^*, P}
)\right)^H$ 
 are well-defined and linearly independent for  $i=1,\ldots,m$ and $j=1,\ldots,n$
 because $u_j$ are real analytic.
Owing to Lemma \ref{lem:3.2},
Theorem \ref{thm:3.1} has been now proved.
\end{proof}

We pin down special cases of Theorem \ref{thm:3.1}:
\begin{example}\label{ex:lowbd}
Suppose $H$ is a closed subgroup of $G$.
\begin{enumerate}[\upshape1)]
\item  
For any $\sigma\in\widehat{G}_{\operatorname {f}}$ and $\tau\in\widehat{H}_{\operatorname {f}}$,
$
c_{\mathfrak{g},K}(I_P^G(\zeta_\sigma),\Ind_H^G\tau)
\ge c_H(\sigma,\tau).
$
\item  
Suppose that there exists $m$
disjoint $H$-invariant open sets of $G/P$.
Then
 $c_{\mathfrak{g},K}(I_P^G(\boldsymbol{1}),C^\infty(G/H))\ge m$.
\end{enumerate}
The first statement is a special case of Theorem~\ref{thm:3.1}
 by regarding $G/P$  as an (obvious) open
$H$-invariant subset,
and the second statement  corresponds to $\sigma = \boldsymbol{1}$, 
 $\tau = \boldsymbol{1}$ and $\zeta_{\sigma} = \boldsymbol{1}$.
\end{example}

Finally, we use the following elementary result for algebraic groups.
\begin{lemma}\label{lem:algH}
Suppose $H$ is an algebraic subgroup of a real algebraic semisimple
Lie group $G$.
If there is no open $H$-orbit on $G/P$,
then there exist infinitely many, disjoint $H$-invariant open sets of
$G/P$. 
\end{lemma}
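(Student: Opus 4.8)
The plan is to argue by dimension counting on the finitely many orbit closures, using the fact that $G/P$ is a projective variety carved up into finitely many locally closed $H$-orbits. First I would recall that $H$ is algebraic and acts algebraically on the projective variety $G/P$, so the $H$-orbits are locally closed, smooth, and finite in number modulo nothing in general—but here we do not assume finiteness; rather, we use that each orbit closure is an irreducible closed subvariety. Under the hypothesis that no $H$-orbit is open, every $H$-orbit $\mathcal{O}$ satisfies $\dim\mathcal{O} < \dim G/P$, hence its closure $\overline{\mathcal{O}}$ is a proper closed subvariety. The key point I would establish is that $G/P$ cannot be written as a \emph{countable} union of such proper subvarieties when we work over $\mathbb{R}$ in the \emph{Euclidean} (Hausdorff) topology: by the Baire category theorem, a real analytic (or real algebraic) manifold is not a countable union of nowhere-dense closed subsets. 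Since each proper closed subvariety $\overline{\mathcal{O}}$ has empty Euclidean interior in $G/P$, the orbit space must in fact be \emph{uncountable}, and in particular there are infinitely many orbits.

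Next I would upgrade "infinitely many orbits" to "infinitely many disjoint open sets." The natural device is to look at the partition of $G/P$ by orbit dimension: let $d$ be the maximal dimension of an $H$-orbit (so $d < \dim G/P$ by hypothesis), and let $X_d \subset G/P$ be the union of all $d$-dimensional orbits. This $X_d$ is a locally closed, $H$-invariant subset, and I claim it is open in its closure in a way that lets us separate orbits: an orbit of maximal dimension $d$ is open in the $d$-dimensional constructible set $X_d$, because any orbit in the boundary of another orbit has strictly smaller dimension. Thus distinct maximal orbits are disjoint open subsets of $X_d$. If there are infinitely many maximal orbits we are done after thickening each to an open set of $G/P$ meeting only that orbit among the maximal ones (possible since $X_d$ is locally closed, hence relatively open in a closed set, and we can take tubular/analytic neighborhoods). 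If instead there are only finitely many orbits of dimension $d$, remove them and repeat on the complement $G/P \setminus X_d$, which is again a lower-dimensional $H$-variety with no open orbit; by descending induction on dimension the process must at some stratum produce infinitely many orbits of the same dimension, else the total orbit count would be finite, contradicting the Baire argument above.

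The main obstacle I expect is the passage from "infinitely many disjoint locally closed orbits of a common dimension" to "infinitely many disjoint genuinely \emph{open} subsets of $G/P$," since a locally closed subset of positive codimension has empty interior. The resolution—and the technical heart of the argument—is that we do \emph{not} need the open sets to be contained in the orbits: we only need pairwise disjoint open subsets of $G/P$ each of which is $H$-invariant, and by the infinitude of orbits we can instead pick infinitely many orbits $\mathcal{O}_1, \mathcal{O}_2, \dots$ lying in pairwise distinct connected components of $X_d$ (after discarding coincidences), or, more robustly, use that the $H$-action is real algebraic so the quotient map to the (constructible, uncountable) orbit space admits a semialgebraic section on a dense open locus, whose fibers can be thickened to disjoint open $H$-saturated tubes. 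I would be careful to phrase the final statement so that it matches exactly what Theorem~\ref{thm:3.1} and the second statement of Theorem~\ref{thm:A} require, namely $m$ pairwise disjoint $H$-invariant open subsets of $G/P$ for every $m$, which the inductive stratification argument supplies once a single stratum has infinitely many orbits.
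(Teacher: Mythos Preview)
Your Baire-category argument correctly shows that there are uncountably many $H$-orbits, but the passage from there to disjoint $H$-invariant open sets has a genuine gap. The assertion that the union $X_d$ of maximal-dimensional orbits is ``the $d$-dimensional constructible set $X_d$'' is incorrect: by upper semicontinuity of stabilizer dimension, $X_d$ is \emph{open} in $G/P$ and hence has dimension $\dim G/P > d$. Consequently the individual $d$-dimensional orbits are \emph{not} open in $X_d$; they have positive codimension there, and your conclusion that ``distinct maximal orbits are disjoint open subsets of $X_d$'' fails. The thickening step does not rescue this: a tubular neighbourhood of an orbit $\mathcal{O}_i$ is not $H$-invariant, and if you saturate it by $H$ the result can pick up lower-dimensional orbits lying in the closure of several $\mathcal{O}_j$, so the saturations need not be disjoint. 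Likewise $X_d$ may well be connected, so the ``distinct connected components'' escape route is not available in general.

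Your final suggestion---a semialgebraic section of the orbit map on a dense open set---points in the right direction, and this is essentially what the paper carries out, but by hand rather than by citing a Rosenlicht-type theorem. The paper first works on the complexification (Lemma~\ref{lem:7.1}): at a point of minimal orbit-codimension one chooses a transversal $Y$ to the $H_c$-orbit so that $H_c \times Y \to X$ is a submersion and each $H_c$-orbit meets $Y$ in a fixed finite number of points. Passing to real points and shrinking (Lemma~\ref{lem:7.2}) yields a positive-dimensional submanifold $N \subset G/P$ with two properties: the action map $H \times N \to G/P$ is a submersion, and distinct points of $N$ lie in distinct $H$-orbits. The first property guarantees that $H \cdot U$ is open in $G/P$ for every open $U \subset N$; the second guarantees that $H \cdot U_1$ and $H \cdot U_2$ are disjoint whenever $U_1, U_2 \subset N$ are. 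Since $\dim N = \dim G/P - d > 0$, one can choose infinitely many pairwise disjoint open subsets of $N$, and their $H$-saturations are the desired $H$-invariant open sets. This slice construction is exactly the ``$H$-saturated tubes'' you allude to, but it is the technical heart of the lemma and cannot be left as a phrase.
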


For the sake of completeness,
 we give a proof of Lemma \ref{lem:algH}
 in Appendix.  

\begin{proof}[Proof of Theorem~\ref{thm:A} 2)]
Suppose there is no open $H$-orbit on $G/P$.  
Then we can take infinitely many disjoint $H$-invariant open subsets $U_i$ of
 $G/P$
 by Lemma \ref{lem:algH}.  
For a given algebraic representation $\tau\in\widehat H_{\operatorname {f}}$,
 we can take $\sigma\in\widehat G_{\operatorname {f}}$
 with $c_H(\sigma,\tau)>0$
 by the Frobenius reciprocity.  
Then  $c_{\mathfrak{g},K}(I_P^G(\zeta_\sigma), \Ind_H^G\tau) 
 =\infty$ by Theorem \ref{thm:3.1}.
Since there are at most finitely many irreducible $(\mathfrak{g},K)$-modules
 occurring in the principal series representation $I_P^G(\zeta_\sigma)$
 as subquotients, Theorem A 2) now follows.
\end{proof}

\begin{proof}[Proof of Corollary~\ref{cor:D}]
Let $m$ be the number of open $H$-orbits on $G/P$. 
By Example \ref{ex:lowbd}, 
 we have 
\[
    c_{{\mathfrak {g}}, K}(I_P^G({\bf{1}}), \Ind _H^G {\bf{1}}) \ge m.  
\]
Comparing this with Corollary~\ref{cor:2.4} in the case $\mu=\rhon$
 and $\tau= \boldsymbol{1}$, 
 we get $m \le \# W({\mathfrak {a}})$.    
\end{proof}

We end this section with a counterexample to an analogous
multiplicity-finite statement
 without algebraic assumptions in Theorem~\ref{thm:3.1}.
\begin{example}
\label{ex:alg}
Let $G = SL(2,\mathbb{R})\times\cdots\times SL(2,\mathbb{R})$ be the direct
product group of $(n+1)$-copies of $SL(2,\mathbb{R})$. 
Fix real numbers $\lambda_1,\ldots,\lambda_n$ which are
 linearly independent over $\mathbb{Q}$.
Writing
$
 k_\theta := \begin{pmatrix} \cos\theta && -\sin\theta\\ 
                      \sin\theta && \cos\theta 
             \end{pmatrix}
$
and
$
p_{t,x} := \begin{pmatrix} e^t & x \\ 0 & e^{-t} \end{pmatrix}
$,
 we define a two-dimensional subgroup of $G$ by 
$$ 
        H = \{g_{t,x} = ( p_{t,x},
       k_{\lambda_1t},\cdots,k_{\lambda_nt}): (t,x)\in\mathbb{R}^2\}.
$$
Then there is no open $H$-orbit on $G/P$ if $n>1$
 because
 $\dim G/P = n+1 > \dim H=2$. 
However,
 we still have a finite multiplicity statement:
\begin{equation}
\label{eqn:SL2}
c_{\mathfrak{g},K}(\pi,\Ind_H^G\tau) \le 2
\quad\text{ for any } \pi\in\widehat G_{\operatorname {ad}}
 \text{ and for any }\tau \in \widehat H_{\operatorname {f}}.  
\end{equation}
Let us prove \eqref{eqn:SL2}.  
We observe that any finite dimensional irreducible representation of $H$ 
factors through the quotient group
$H/[H,H] \simeq \mathbb{R}$,
and is of the form 
$
  \tau_\mu(g_{t,x}) = e^{\mu t}
$
for some $\mu\in\mathbb{C}$.  
Let 
$
    \chi_m(k_\theta):=e^{2\pi\sqrt{-1}m\theta}
$
 and 
$
    \sigma_\mu(p_{t,x}):=e^{\mu t}.
$
Then $\chi_m$ $(m\in\mathbb{Z})$ and
$\sigma_\mu$ $(\mu\in\mathbb{C})$ are one dimensional representations
of $SO(2)$ and 
$AN = \{ p_{t,x}: t,x \in \mathbb{R} \}$, 
 respectively.

For $m=(m_1,\dots,m_n)\in\mathbb{Z}^n$ and
$u\in C^\infty (G/H; \tau_\mu)$,
we define
\begin{align*}
 & (S_{m}u)(g_0,g_1,\ldots,g_m) :=
\\ 
 &\int_0^{2\pi}\cdots\int_0^{2\pi}
  u(g_0,g_1k_{\theta_1},\ldots,g_nk_{\theta_n})
  e^{2\pi\sqrt{-1}(m_1\theta_1+\cdots+ m_n\theta_n)}
\frac{d\theta_1}{2\pi}\cdots\frac{d\theta_n}{2\pi}.
\end{align*}
Then, for $t,x,\varphi_1,\dots,\varphi_n \in \mathbb{R}/2 \pi {\mathbb{Z}}$,
and $g=(g_0, g_1, \cdots, g_n)$, 
 we have
\begin{equation*}
 (S_m u) (g  g_{t,x}) 
  ={} \sigma_{\mu-2\pi\sqrt{-1}(\lambda_1m_1+\dots+\lambda_nm_n)}
       (p_{t,x}^{-1}) \prod_{j=1}^n \chi_{m_j} (k_{\varphi_j}^{-1})
       (S_m u) (g).
\end{equation*}
Thus, $S_m u$ defines an element of 
$
C^\infty (G/\widetilde{H};
 \sigma_{\mu-2\pi\sqrt{-1}\langle\lambda,m\rangle} \otimes
 \chi_{m})
$
where
$
     \chi_m=\chi_{m_1} \otimes \cdots \otimes \chi_{m_n},
$ 
$\langle\lambda,m\rangle := \lambda_1 m_1 + \dots + \lambda_n m_n,
$ 
 and 
\[
  \widetilde{H} := AN \times SO(2) \times \cdots \times SO(2).  
\]
Clearly,
$S:= \bigoplus_{m\in\mathbb{Z}^n} S_m$ gives an injective $G$-homomorphism:
$$
 S: C^\infty(G/H;\tau_\mu) \to
 \bigoplus_{m\in\mathbb{Z}^n}
 C^\infty(G/\widetilde{H};
 \sigma_{\mu-2\pi\sqrt{-1}\langle\lambda,m\rangle }
 \otimes \chi_{m} ).  
$$
Now \eqref{eqn:SL2} follows from the well-known facts on
$G_1 = SL(2,\mathbb{R})$:
\begin{enumerate}
    \renewcommand{\labelenumi}{\theenumi)}
\item  
$\# \{ \mu\in\mathbb{C}: \Hom_{(\mathfrak{g}_1,K_1)}
 (\pi_1, \Ind_{AN}^{G_1} \sigma_\mu) \ne 0 \} \le 2$, 
 for any $\pi_1 \in \widehat{G_1}_{\operatorname {ad}}$.  
\item  
$\Ind_{K_1}^{G_1} \chi_l$ is multiplicity-free for any $l\in\mathbb{Z}$.
\end{enumerate}
\end{example}

\section{Uniform boundedness of the multiplicities}
\label{sec:4}

This section is devoted to the proof of Theorem~\ref{thm:B}.
We will prove \eqref{eqn:HB} $\Rightarrow$ \eqref{eqn:HG1}
 based on the general formula
\eqref{eqn:2.1} on upper bounds of multiplicities
(see Theorem~\ref{thm:2.1}).  
The opposite implication \eqref{eqn:HG2} $\Rightarrow$ \eqref{eqn:BB} 
(or \eqref{eqn:HG3} $\Rightarrow$ \eqref{eqn:BB} when $H$ is reductive)
is proved by using Theorem \ref{thm:3.1} on lower bounds.

We begin with the following uniform estimate
 of 
 multiplicities of irreducible representations occurring 
 in principal series representations
 as subquotients 
 for which there is,
 to our knowledge,
 no direct proof in the literature.
So we will give its proof in the appendix (see Section~\ref{subsec:7.2}).

\begin{proposition}
\label{prop:4.2}
There exits a constant $N$ depending only on $G$ such that
$$
   [\ipgz:\pi] \le N
 \quad\text{for any $\pi \in \widehat G_{\operatorname {ad}}$ and for any $\zeta \in \Lrep$}.
$$
\end{proposition}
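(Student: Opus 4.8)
The plan is to bound the \emph{total length} of $\ipgz$ by a constant depending only on $G$; since $[\ipgz:\pi]$ never exceeds $\operatorname{length}(\ipgz)$, this suffices. The engine is Casselman's Jacquet functor $J=r_{\bar P}$ from finite length $(\mathfrak g,K)$-modules to finite length $(\mathfrak l,K_L)$-modules (\cite{C},\cite{W}): it is \emph{exact}, and it annihilates no nonzero module (this last property is the $(\mathfrak g,K)$-module form of Casselman's subrepresentation theorem). Consequently $\operatorname{length}(V)\le\operatorname{length}(J(V))$ for every finite length $(\mathfrak g,K)$-module $V$; indeed, exactness makes $\operatorname{length}\circ J$ additive on short exact sequences, while $\operatorname{length}(J(\pi))\ge 1$ for $\pi$ irreducible, so the inequality follows by induction on $\operatorname{length}(V)$.

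It remains to bound $\operatorname{length}(J(\ipgz))$ uniformly in $\zeta$. Here I would invoke the Bruhat decomposition $G=\bigsqcup_{w\in W(\mathfrak a)}\bar P w P$: filtering the sections of $\mathcal V_{\zeta,P}$ over $G/P$ by the closure order of the (finitely many) $\bar P$-orbits produces a finite filtration of $\ipgz$ whose associated graded, after applying $r_{\bar P}$, is the standard Jacquet-module computation for a minimal principal series, namely $\bigoplus_{w\in W(\mathfrak a)}\zeta^{w}$, where $\zeta^{w}$ denotes the $w$-conjugate of $\zeta$ twisted by the appropriate $\rho_{\mathfrak n}$-shift. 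Since $\zeta\in\Lrep$ is an irreducible finite dimensional representation of $L=MA$ and each $w\in W(\mathfrak a)$ induces an automorphism of $L$, every $\zeta^{w}$ is again irreducible and nonzero, so the filtration has exactly $\#W(\mathfrak a)$ steps with irreducible graded pieces; hence $\operatorname{length}(J(\ipgz))=\#W(\mathfrak a)$, independently of $\zeta$. Combining the two steps gives $[\ipgz:\pi]\le\operatorname{length}(\ipgz)\le\operatorname{length}(J(\ipgz))=\#W(\mathfrak a)=:N$, a constant depending only on the restricted root system $\Sigma(\mathfrak a)$, hence only on $G$.

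The delicate points, to be carried out in Section~\ref{subsec:7.2}, are: (i) citing the exactness of $J$ and its nonvanishing on nonzero modules in the purely algebraic $(\mathfrak g,K)$-module setting; and (ii) verifying that the associated graded of $r_{\bar P}(\ipgz)$ really has length exactly $\#W(\mathfrak a)$ — i.e.\ that all Bruhat cells contribute a nonzero piece (a feature special to the minimal parabolic $P$) and that the $\rho_{\mathfrak n}$-shifts are such that each piece stays irreducible — together with the bookkeeping needed when $M$ is disconnected. I expect (ii) to be the main obstacle to write down cleanly; none of it uses the hyperfunction machinery of the earlier sections, so it forms an entirely separate, classical ingredient.
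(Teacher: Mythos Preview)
Your approach has a genuine gap: the claimed computation $\operatorname{length}_L\bigl(J(\ipgz)\bigr) = \#W(\mathfrak{a})$ is \emph{false} for real groups. The Bruhat-filtration/geometric-lemma heuristic you invoke is a $p$-adic phenomenon that does not transfer to the archimedean setting. A concrete counterexample already occurs for $G = SL(2,\mathbb{R})$: the spherical principal series $I_P^G(\mathbf{1}\otimes\rho_{\mathfrak n})$ at the first reducibility point has three composition factors --- the trivial representation together with the holomorphic and antiholomorphic discrete series $D_2^{\pm}$ --- hence $\operatorname{length}(\ipgz)=3>2=\#W(\mathfrak{a})$. Since $J$ is exact and annihilates no nonzero module, this forces $\operatorname{length}_L\bigl(J(\ipgz)\bigr)\ge 3$, contradicting the value you assert. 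The underlying reason is that in the $p$-adic case $r_{\bar P}$ is literally $\bar N$-coinvariants and the Bruhat cells are clopen, so the filtration is clean; over $\mathbb{R}$, Casselman's functor is the $\bar{\mathfrak n}$-adic \emph{completion} (this is precisely what buys exactness), and that completion picks up extra contributions at singular parameters, so $\dim J(\ipgz)$ genuinely jumps there. Your own caveat in (ii) --- ``that all Bruhat cells contribute a nonzero piece'' and that each ``stays irreducible'' --- is thus not a mere bookkeeping issue but the point where the argument breaks.

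The paper's proof proceeds along an entirely different line and never touches Jacquet modules. It first applies the Jantzen--Zuckerman translation principle to reduce to the range $|\operatorname{Re}\,d\zeta|\le C$ for a fixed constant $C=C(\mathfrak g)$: the equisingular translation functors $\Phi_\lambda^i$ are equivalences of categories and carry principal series to principal series, so one may shift $\zeta$ into a bounded region without changing any multiplicities. Once the infinitesimal character is bounded, Vogan's minimal $K$-type estimate $|\operatorname{Re}\lambda|\ge\|\tau\|-C'$ forces every irreducible subquotient $\pi$ of $\ipgz$ to have a minimal $K$-type $\tau$ with $\|\tau\|\le C+C'$. Since minimal $K$-types occur with multiplicity one in $\pi$, one gets $[\ipgz:\pi]\le\dim\Hom_K(\tau,\ipgz)=\dim\Hom_M(\tau|_M,\zeta|_M)\le\dim\tau$, and the right-hand side is bounded by the maximum of $\dim\tau$ over the finitely many $K$-types with $\|\tau\|\le C+C'$. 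This yields the constant $N$; note that the argument bounds $[\ipgz:\pi]$ directly rather than the total length.
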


Retain the notation of Section \ref{sec:2}.  
In particular, 
 $B$ is the Borel subgroup of $G_c$ with the Lie algebra $\mathfrak{b}$
  given by the positive system $\Sigma(\mathfrak{j})^+$.
Then $\mathfrak{b}$ is contained in the complexified Lie algebra 
 ${\mathfrak {p}}_c$ of the minimal parabolic subgroup $P=LN$ of $G$.
\begin{lemma}
\label{lem:HPB}
If $H_c$ acts on $G_c/B$ with an open orbit,
then there exists 
$g\in G$ such that $H_cgB$ is open in $G_c$ and that $HgP$ is open in
$G$.
In particular, 
\eqref{eqn:HB} $\Rightarrow$ \eqref{eqn:HP}.  
\end{lemma}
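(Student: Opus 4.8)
The plan is to work at the level of complexified Lie algebras and real/complex points, moving from an open $H_c$-orbit on $G_c/B$ down to an open $H$-orbit on $G/P$ via the open $G_c/B$-orbit. First I would translate the open-orbit hypothesis into an infinitesimal statement: $H_c$ has an open orbit on $G_c/B$ if and only if there is a point $gB$ with $\mathfrak{h}_c + \Ad(g)\mathfrak{b} = \mathfrak{g}_c$. The subtlety is that the base point $g$ lives in $G_c$, not $G$, so the first real task is to arrange $g \in G$. Since $\mathfrak{b} \subset \mathfrak{p}_c$ and $P$ has a full real flag variety worth of relevant points, I would invoke the fact that the union of the open $G$-orbit data is Zariski dense, or more concretely use that $G$ (resp. its identity component) acts with finitely many orbits on the complex flag variety and the real points $G/P$ sit inside $G_c/B$ compatibly; a density/genericity argument lets me choose the base point of the open $H_c$-orbit of the form $gB$ with $g\in G$.

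Next, granted $g\in G$ with $\mathfrak{h}_c + \Ad(g)\mathfrak{b} = \mathfrak{g}_c$, I would intersect with the real form. Write $\Ad(g)\mathfrak{b} = \Ad(g)\mathfrak{b}$ and note $\Ad(g)\mathfrak{p}_c = \Ad(g)\mathfrak{p} \otimes_{\mathbb{R}} \mathbb{C}$ is the complexification of the real parabolic $\Ad(g)\mathfrak{p}$. The containment $\mathfrak{b} \subset \mathfrak{p}_c$ gives $\mathfrak{h}_c + \Ad(g)\mathfrak{p}_c = \mathfrak{g}_c$, i.e. $(\mathfrak{h} + \Ad(g)\mathfrak{p})_c = \mathfrak{g}_c$. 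Since a complex subspace of $\mathfrak{g}_c$ that is defined over $\mathbb{R}$ equals $\mathfrak{g}_c$ iff its real part equals $\mathfrak{g}$ (because $\mathfrak{h}$, $\Ad(g)\mathfrak{p}$ and their sum are all real subspaces), we conclude $\mathfrak{h} + \Ad(g)\mathfrak{p} = \mathfrak{g}$. By counting dimensions of tangent spaces, this is exactly the statement that the $H$-orbit through $gP$ in $G/P$ is open, i.e. $HgP$ is open in $G$. This simultaneously gives $H_cgB$ open in $G_c$ (from the infinitesimal condition with $\mathfrak{b}$) and $HgP$ open in $G$.

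The main obstacle is the descent of the base point from $G_c$ to $G$: a priori the open $H_c$-orbit on $G_c/B$ need not meet the real points $G/P \hookrightarrow G_c/B$, and even if it does, one must ensure the meeting point can be taken in $G$ itself (not merely in $G_c$) in a way compatible with the real structure on both $H$ and $P$. I would handle this by a Zariski-density argument: the set of $g \in G_c$ with $\mathfrak{h}_c + \Ad(g)\mathfrak{b} = \mathfrak{g}_c$ is Zariski open and nonempty in $G_c$, hence it meets the Zariski-dense real form $G$; one then translates so that this $g$ also realizes $\Ad(g)\mathfrak{b}\subset \Ad(g)\mathfrak{p}_c$ with $\Ad(g)\mathfrak{p}$ a real parabolic, which is automatic since $\mathfrak{b}\subset\mathfrak{p}_c$ and $\Ad$ is defined over $\mathbb{R}$. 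The final sentence ``\eqref{eqn:HB} $\Rightarrow$ \eqref{eqn:HP}'' is then immediate, as is the implication chain to \eqref{eqn:PP} when $H$ is reductive by applying the same argument to $H$ in place of $G$ and $H_c$ in place of $G_c$ in the appropriate flag varieties.
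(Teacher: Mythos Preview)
Your argument is correct and is essentially the same as the paper's: the set of $g\in G_c$ with $\mathfrak{h}_c+\Ad(g)\mathfrak{b}=\mathfrak{g}_c$ is Zariski (equivalently, analytically) open and nonempty, hence meets the real form $G$; then $\mathfrak{b}\subset\mathfrak{p}_c$ and the fact that $\mathfrak{h}+\Ad(g)\mathfrak{p}$ is a real subspace with complexification $\mathfrak{g}_c$ give $\mathfrak{h}+\Ad(g)\mathfrak{p}=\mathfrak{g}$. The detour in your first paragraph through finitely many $G$-orbits on $G_c/B$ and an embedding of $G/P$ into $G_c/B$ is unnecessary (and the latter is not literally available in general); the Zariski-density argument you give at the end is exactly what is needed and is what the paper does.
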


\begin{proof}
Put $G_c'=\{g\in G_c: \Ad(g)\mathfrak{h}_c + \mathfrak{b} \ne \mathfrak{g}_c\}$.
Then $G_c'$ is a proper closed analytic subset of the complex manifold $G_c$.
Hence $G\not\subset G_c'$ and there exists $g\in G$ with 
$\Ad(g)\mathfrak{h}_c + \mathfrak{b} = \mathfrak{g}_c$,
which implies $\Ad(g)\mathfrak{h} + \Lie(P) = \mathfrak{g}$.
\end{proof}

Suppose that $H_c$ has an open orbit
 on $G_c/B$.  
Replacing $H$ by $g^{-1} Hg$ in Lemma \ref{lem:HPB}, 
 we may assume
 that $H_c B$ is open in $G_c$
 and $HP$ is open in $G$.  
Then we apply Theorem \ref{thm:2.1}
 and Proposition \ref{prop:4.2}
 with $\bar P$ replaced by $P$.  
Thus we have shown 
\[
  c_{\mathfrak {g}, K}(\pi, \Ind_H^G \tau)
  \le
  N \#(W({\mathfrak {t}})\backslash W({\mathfrak {j}};d \pi))
  \sum_{\substack{\zeta \in \Lrep \\ d \zeta = d \pi}} c_{H \cap P (V_{\zeta}, V_{\tau, P})}
\] 
for any $\pi \in \widehat G_{\operatorname{ad}}$
 with infinitesimal character $d \pi$
 and for any $(\tau, V_{\tau}) \in \widehat H_{\operatorname{f}}$.  
Now the implication \eqref{eqn:HB} $\Rightarrow$ \eqref{eqn:HG1}
 in Theorem \ref{thm:B} follows from Proposition \ref{prop:4.1}
 below on finite dimensional representations.  

Let $P_0$ be the identity component of $P$,
 $J$ the Cartan subgroup of $G$ with Lie algebra $\mathfrak{j}$, 
 and $Z(G)$ the center of $G$.
Let $D$ be the maximal dimension of the irreducible representations of $J$.
Note that $D \le \#(J/Z(G)\exp\mathfrak{j})=\#(P/Z(G)P_0)$
  and $D = 1$ if $G$ is linear.

\begin{proposition}
\label{prop:4.1}
Assume that $H_c B$ is open in $G_c$ and that $H P$ is open in $G$. 
For any $(\tau, V_\tau) \in \widehat{H}_{\operatorname {f}}$ and $(\zeta, V_\zeta) \in \Lrep$
 we have
 $c_{H\cap P}(V_{\zeta}, V_{\tau,P}) \le D\cdot\dim\tau$.
\end{proposition}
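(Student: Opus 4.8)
The plan is first to reduce, by absorbing the character $\mathbb{C}_{\rho_{\mathfrak n}}$ into $\zeta$, to the inequality $\dim\Hom_{H\cap P}(V_\zeta, V_\tau|_{H\cap P})\le D\cdot\dim\tau$ for an arbitrary $(\zeta,V_\zeta)\in\Lrep$ and $(\tau,V_\tau)\in\widehat{H}_{\operatorname{f}}$. Since $\Hom_{H\cap P}(V_\zeta,-)$ is left exact and the composition factors $W_i$ of the $(H\cap P)$-module $V_\tau|_{H\cap P}$ satisfy $\sum_i\dim W_i=\dim\tau$, this follows once I show $\dim\Hom_{H\cap P}(V_\zeta,W)\le D\cdot\dim W$ for every irreducible representation $W$ of $H\cap P$. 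Now $V_\zeta$ is trivial on $N$, hence on the normal subgroup $H\cap N$ of $H\cap P$, so $\Hom_{H\cap P}(V_\zeta,W)=0$ unless $W$ is trivial on $H\cap N$, i.e.\ unless $W$ factors through the group $H_L$ defined as the image of $H\cap P$ in $L=P/N$; thus it remains to bound $\dim\Hom_{H_L}(V_\zeta,W)$ for irreducible $H_L$-modules $W$.

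Next I would extract the geometric input from the hypothesis. Since $H_cB$ is open in $G_c$ we have $\mathfrak h_c+\mathfrak b=\mathfrak g_c$; as $\mathfrak b\subseteq\mathfrak p_c$ (the positive systems being compatible), intersecting with $\mathfrak p_c$ gives $(\mathfrak h_c\cap\mathfrak p_c)+\mathfrak b=\mathfrak p_c$, and projecting modulo the nilradical $\mathfrak n_c$ of $\mathfrak p_c$—writing $\mathfrak b=\mathfrak b_L\oplus\mathfrak n_c$ with $\mathfrak b_L:=\mathfrak b\cap\mathfrak l_c$ a Borel subalgebra of $\mathfrak l_c$—yields the identity
\[
\mathfrak h_{L,c}+\mathfrak b_L=\mathfrak l_c ,
\]
where $\mathfrak h_{L,c}$ is the complexified Lie algebra of $H_L$. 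Here $\mathfrak b_L=\mathfrak j_c\oplus\mathfrak n_{\mathfrak m}^{+}$, with $\mathfrak n_{\mathfrak m}^{+}$ the nilradical of the Borel subalgebra $\mathfrak b\cap\mathfrak m_c$ of $\mathfrak m_c$.

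The core of the argument is then a piece of linear algebra. Let $E:=V_\zeta^{\mathfrak n_{\mathfrak m}^{+}}$ be the associated highest weight subspace; it is nonzero by Engel's theorem, and $\mathfrak b_L$ acts on it through its abelian quotient $\mathfrak b_L/\mathfrak n_{\mathfrak m}^{+}\cong\mathfrak j_c$, so in particular $\mathfrak b_L E\subseteq E$. I claim $U(\mathfrak h_{L,c})\cdot E=V_\zeta$: the subspace $W_0:=U(\mathfrak h_{L,c})E$ is $\mathfrak h_{L,c}$-stable by construction, and a routine induction on the degree of monomials from $U(\mathfrak h_{L,c})$—using $[\mathfrak b_L,\mathfrak h_{L,c}]\subseteq\mathfrak l_c=\mathfrak h_{L,c}+\mathfrak b_L$ together with $\mathfrak b_L E\subseteq E$—shows $W_0$ is also $\mathfrak b_L$-stable, hence $\mathfrak l_c$-stable and nonzero, so $W_0=V_\zeta$ by irreducibility of $V_\zeta$. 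Consequently restriction to $E$ is injective on $\Hom_{\mathfrak h_{L,c}}(V_\zeta,W)$, and since any $H_L$-homomorphism is in particular an $\mathfrak h_{L,c}$-homomorphism, this gives $\dim\Hom_{H_L}(V_\zeta,W)\le\dim E\cdot\dim W$.

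It then remains to prove $\dim E=\dim V_\zeta^{\mathfrak n_{\mathfrak m}^{+}}\le D$, and this is the single place where the structure of the Langlands factor $M$, and with it the constant $D$, genuinely enters. The space $E$ is a module over the Cartan subgroup $J$ (since $J$ centralizes $\mathfrak j\supseteq\mathfrak t$, it acts trivially on the roots of $(\mathfrak m_c,\mathfrak t_c)$ and so preserves $\mathfrak n_{\mathfrak m}^{+}$), and using the structure fact that $M$ is generated by $M^0$ together with finitely many elements each of which centralizes $M^0$—so that $M/M^0$ acts trivially on $M^0$—one sees by Clifford theory that $E$ is \emph{irreducible} as a $J$-module, whence $\dim E\le D$ by the definition of $D$. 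Combining this with the previous paragraph gives $\dim\Hom_{H_L}(V_\zeta,W)\le D\cdot\dim W$, and the proposition follows. I expect this last structural step—identifying $\dim V_\zeta^{\mathfrak n_{\mathfrak m}^{+}}$ with the dimension of an irreducible representation of $J$ when $M$ is disconnected—to be the only non-formal ingredient, and hence the point that needs the most care; everything preceding it is essentially bookkeeping plus the elementary module-generation lemma.
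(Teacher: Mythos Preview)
Your argument is essentially correct and runs parallel to the paper's, but in a dual formulation. The paper works on the contragredient side: it invokes the structural fact that an irreducible $P$-module $\zeta^*$ factors as a pair $(\zeta_1,\zeta_o)\in\widehat J\times\widehat{P_0}$ via $J\times P_0\to P$, so that as a $\mathfrak p_c$-module $V_{\zeta^*}$ is $\dim\zeta_1$ copies of the Borel--Weil module $\mathcal O(\widetilde P_c/\widetilde B,\mathcal L_\lambda)$; then, using the same identity $\mathfrak p_c=(\mathfrak h_c\cap\mathfrak p_c)+\mathfrak b$ you derive, the open $(H\cap P)_c$-orbit on $\widetilde P_c/\widetilde B$ forces $(\mathfrak h_c\cap\mathfrak p_c)$-invariant holomorphic sections to be determined by their value at a single point, yielding $c_{H\cap P}(V_\zeta,V_{\tau,P})\le\dim\zeta_1\cdot\dim\tau\le D\cdot\dim\tau$. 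Your module-generation lemma (the highest-weight space $E$ generates $V_\zeta$ under $U(\mathfrak h_{L,c})$) is exactly the dual of ``sections are determined at one point,'' so the two proofs share the same geometric core. Your route is a bit more elementary in that it avoids Borel--Weil, at the cost of having to argue directly that $\dim E\le D$.

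Two small points deserve attention. First, in your generation step you conclude $W_0=V_\zeta$ ``by irreducibility of $V_\zeta$,'' but $V_\zeta$ need not be $\mathfrak l_c$-irreducible when $L$ is disconnected (precisely the case $D>1$). The fix is immediate: $V_\zeta$ is $\mathfrak l_c$-semisimple by Clifford theory, $E$ meets every simple $\mathfrak l_c$-summand (each has a highest-weight line), and $W_0$ is $\mathfrak l_c$-stable containing $E$, so $W_0=V_\zeta$. Second, your final step---showing $E$ is an irreducible $J$-module---is exactly the content of the paper's structural assertion that $\zeta\leftrightarrow(\zeta_1,\zeta_o)$; both rest on the same fact that the component group of $M$ is represented by elements centralizing $M^0$ (true for linear $G$ via $M=FM^0$ with $F$ central, and lifted to finite covers by a continuity argument), which makes $V_\zeta|_{L^0}$ isotypic and identifies $E$ with the $J$-module $\zeta_1$. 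So your flagged ``delicate point'' is precisely the structural input the paper also uses, just packaged differently.
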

\begin{proof}
It follows from
 $\mathfrak{g}_c = \mathfrak{h}_c + \mathfrak{b}$ and $\mathfrak{b} \subset\mathfrak{p}_c$
 that
\begin{equation}
\label{eqn:4.1}
 \mathfrak{p}_c = (\mathfrak{h}_c \cap \mathfrak{p}_c) + \mathfrak{b}.
\end{equation}
Let $\widetilde{P}_c$ be
the connected and simply connected complex Lie group
with Lie algebra $\mathfrak{p}_c$.  
We write $(H\cap P)_c$
 and $\widetilde B$ for the connected subgroups of
$\widetilde P_c$ with Lie algebra $\mathfrak{h}_c\cap\mathfrak{p}_c$
 and $\mathfrak{b}$, 
respectively. 
Then the $P$-module ${\zeta}^*$
 uniquely corresponds to irreducible representations
 $\zeta_1$ of $J$ and $\zeta_o$ of $P_0$ by the natural map 
 $J\times P_o\ni(j,p)\mapsto jp\in P$
 and hence ${\zeta}^*$ is isomorphic
to the direct sum of $\dim\zeta_1$ copies of
$\mathcal{O}(\widetilde P_c/\widetilde B,\mathcal{L}_\lambda)$
as $\mathfrak{p}_c$-modules.
Here 
 $\mathcal{L}_\lambda$ is the $\widetilde P_c$-homogeneous holomorphic
 line bundle over $\widetilde P_c/\widetilde B$
 associated with a suitable character $\lambda$ of $B$ 
 such that the space of global holomorphic sections,
 denoted by $\mathcal{O}(\widetilde P_c/\widetilde B,\mathcal{L}_\lambda)$, 
corresponds to the Borel--Weil realization of $\zeta_o$.
Note that $\dim\zeta_1\le D$.
Passing to the space of fixed vectors under
 the diagonal action of $H \cap P$ on $V_{\tau,P}\otimes V_\zeta$,
 we have
$$
 (V_{\tau,P}\otimes V_\zeta)^{H\cap P}
  \subset \bigoplus^{\dim\zeta_1}
          \Big(V_{\tau,P}\otimes\mathcal{O}(\widetilde P_c/\widetilde B,\mathcal{L}_\lambda)
          \Big)^{\mathfrak{h}_c\cap\mathfrak{p}_c}.  
$$
Since $(H\cap P)_c$ acts on $\widetilde P_c/ \widetilde B$ with an open orbit
 by \eqref{eqn:4.1},  
 $\dim (V_{\tau, P}\otimes V_\zeta)^{H\cap P} \le \dim\zeta_1\cdot\dim\tau$
because a holomorphic function on a connected complex manifold 
 is uniquely determined by its restriction to an open subset.
Hence Proposition \ref{prop:4.1} is proved.
\end{proof}

Thus we have completed the proof of the implication \eqref{eqn:HB} $\Rightarrow$ \eqref{eqn:HG1} 
in Theorem~\ref{thm:B}.  

\begin{remark}
Let $G$ be real algebraic (not necessarily reductive),
 $\sigma$ an involution and $H = G^\sigma$.
R.~Lipsman proved that the multiplicity of the abstract Plancherel formula
 for $G/H$ is uniformly bounded 
under the hypothesis that this statement
 is true in the reductive case (\cite[Theorem~7.3]{Li}).
Theorem~\ref{thm:B} shows that his hypothesis is true
 because there always exists an open $H_c$-orbit on $G_c/B$
for any complex reductive symmetric pair $(G_c,H_c)$.
\end{remark}

Let us prove the remaining implication
 in Theorem \ref{thm:B}, 
 namely, 
 \eqref{eqn:HG2} $\Rightarrow$ \eqref{eqn:HB}
(or \eqref{eqn:HG3} $\Rightarrow$ \eqref{eqn:HB} when $H$ is reductive).  

Let $N$ be the constant in Proposition \ref{prop:4.2}.  
Then,
 for any $\pi \in \widehat G_{\operatorname{ad}}$, 
 $\zeta \in \widehat L_{\operatorname{f}}$, 
 and $\tau \in \widehat H_{\operatorname{f}}$, 
 we have 
\[
     c_{{\mathfrak {g}}, K} (\ipgz, \Ind_H^G \tau)
    \le 
     N c_{{\mathfrak {g}}, K} (\pi, \Ind_H^G \tau).  
\]
Therefore the conditions \eqref{eqn:HG2} and \eqref{eqn:HG3}
 imply
\begin{align*}
\sup_{\substack{\tau \in \widehat H_{\operatorname{f}}
      \\ \dim \tau =1}}
\sup_{\zeta \in \widehat L_{\operatorname{f}}} 
      c_{{\mathfrak {g}}, K}(\ipgz, \Ind_H^G \tau)<&\infty, 
\\
  \sup_{\zeta \in \widehat L_{\operatorname{f}}} 
      c_{{\mathfrak {g}}, K}(\ipgz, C^{\infty}(G/H))<&\infty, 
\end{align*}
respectively.  
Applying Theorem \ref{thm:3.1}
 with $m=1$, 
 we get 
\begin{align*}
\sup_{\substack{\tau \in \widehat H_{\operatorname{f}}
      \\ \dim \tau =1}}
\sup_{\sigma \in \widehat G_{\operatorname{f}}} 
      c_{H}(\sigma, \tau)<&\infty, 
\\
  \sup_{\sigma \in \widehat G_{\operatorname{f}}} 
      c_{H}(\sigma, {\bf{1}})<&\infty, 
\end{align*}
respectively.  
Hence, 
 the implication \eqref{eqn:HG2} $\Rightarrow$ \eqref{eqn:HB}
 (or \eqref{eqn:HG3} $\Rightarrow$ \eqref{eqn:HB}
 when $H$ is reductive)
 reduces to the implication (iii)$'$ $\Rightarrow$ (i)
 (or (iv)$'$ $\Rightarrow$ (i))
 in the following classical results on finite dimensional
representations: 
\begin{lemma}
[\cite{VK}]
\label{lem:HB}
Let $H_c$ be an algebraic subgroup of a complex semisimple Lie group
$G_c$. 
In what follows $\widehat{G}_{\text{\upshape alg}}$,
$\widehat{H}_{\text{\upshape alg}}$ denote the set of irreducible algebraic finite
dimensional irreducible representations of $G_c$, $H_c$, respectively.
Then the following five conditions on the pair $(G_c,H_c)$ are
equivalent:
\begin{enumerate}
\item[\upshape(i)]  
There exists an open $H_c$-orbit on $G_c/B$.
\item[\upshape (ii)]  
$c_H(\sigma,\tau)\le\dim\tau$ for any
$\sigma\in\widehat{G}_{\text{\upshape alg}}$
and $\tau\in\widehat{H}_{\text{\upshape alg}}$.
\item[\upshape(ii)$'$] 
$
\underset {\tau \in \widehat H_{\operatorname {alg}}} \sup \underset {\sigma \in \widehat G_{\operatorname {alg}}} \sup \dfrac 1 {\dim \tau} c_{H}(\sigma, \tau)<\infty.  
$

\item  [\upshape (iii)]
$c_H(\sigma,\tau)\le1$ for any 
$\sigma\in\widehat{G}_{\text{\upshape alg}}$
and $\tau\in\widehat{H}_{\text{\upshape alg}}$
such that $\dim\tau=1$.
\item[\upshape(iii)$'$] 
$
\displaystyle{\sup_{\substack{\tau \in \widehat H_{\text{\upshape alg}}\\ \dim \tau =1}}}\sup_{\sigma \in \widehat G_{\text{\upshape alg}}} c_{H}(\sigma, \tau)<\infty.  
$
\end{enumerate}
Furthermore, if $H$ is reductive, then they are also equivalent to:
\begin{itemize}
\item[\upshape(iv)]
$c_H(\sigma,\mathbf{1})\le1$ for any 
$\sigma\in\widehat{G}_{\text{\upshape alg}}$.
\item[\upshape(iv)$'$]
$
  \displaystyle{\sup_{\sigma \in \widehat G_{\text{\upshape alg}}}} c_H(\sigma, {\bf{1}})< \infty.  
$
\end{itemize}
\end{lemma}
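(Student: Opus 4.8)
The substantive content of the lemma is the equivalence of the analytic conditions with the geometric condition (i); the remaining links are formal. Indeed (ii)$\Rightarrow$(ii)$'$, (ii)$\Rightarrow$(iii), (ii)$'\Rightarrow$(iii)$'$ and (iii)$\Rightarrow$(iii)$'$ are immediate, since $\tfrac1{\dim\tau}c_H(\sigma,\tau)=c_H(\sigma,\tau)$ when $\dim\tau=1$; likewise, when $H$ is reductive, (iii)$\Rightarrow$(iv)$\Rightarrow$(iv)$'$ and (iii)$'\Rightarrow$(iv)$'$, as $\mathbf{1}$ is one of the representations $\tau$ with $\dim\tau=1$. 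So the whole statement reduces to proving (i)$\Rightarrow$(ii), (iii)$'\Rightarrow$(i), and---when $H$ is reductive---(iv)$'\Rightarrow$(i); the last two are precisely the implications used in Section~\ref{sec:4}. Two ingredients underlie the plan: algebraic Frobenius reciprocity $c_H(\sigma,\tau)=\dim\Hom_{G_c}\!\bigl(V_\sigma,\Ind_{H_c}^{G_c}\tau\bigr)$, where $\Ind_{H_c}^{G_c}\tau=\{f\in\mathbb{C}[G_c]\otimes V_\tau:\ f(gh)=\tau(h)^{-1}f(g)\}$ is a rational $G_c$-module; and the identity $\dim\Hom_{G_c}(V_\mu,M)=\dim(M^{N})_\mu$ for any rational $G_c$-module $M$, where $N$ is the unipotent radical of $B$ and $(M^{N})_\mu$ the space of highest weight vectors of weight $\mu$ (this rests on complete reducibility of rational $G_c$-modules).

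To prove (i)$\Rightarrow$(ii) I would let $N$ act on $\Ind_{H_c}^{G_c}\tau$ by left translations. With $\mu$ the highest weight of $\sigma$, the identity above gives
\[
 c_H(\sigma,\tau)=\dim\bigl\{\,f\in\Ind_{H_c}^{G_c}\tau\ :\ f(ntgh)=\mu(t)\,\tau(h)^{-1}f(g)\ \text{ for }n\in N,\ t\in T,\ h\in H_c\,\bigr\}.
\]
By (i) the group $B$ has an open orbit on $G_c/H_c$, so there is $g_0\in G_c$ with $Bg_0H_c$ open, hence dense, in $G_c$. Any $f$ in the displayed space is then determined on the dense open set $Bg_0H_c$, and therefore on all of $G_c$, by the single value $f(g_0)\in V_\tau$; hence $c_H(\sigma,\tau)\le\dim V_\tau=\dim\tau$. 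When $\dim\tau=1$ the compatibility imposed by the stabiliser of $g_0$ in $B\times H_c$ cuts this down to $c_H(\sigma,\tau)\le1$. This gives (i)$\Rightarrow$(ii) and (i)$\Rightarrow$(iii).

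For the converses I would argue by contraposition. If (i) fails, $B$ has no dense orbit on $G_c/H_c$, so by Rosenlicht's theorem $\mathbb{C}(G_c/H_c)^B\ne\mathbb{C}$; fix a nonconstant $\phi$ in it, regarded as a left-$B$-invariant, right-$H_c$-invariant element of $\mathbb{C}(G_c)$. Using Lie--Kolchin for the solvable group $B$ together with a coprimality argument (carried out carefully in \cite{VK}), one writes $\phi=P/Q$ with $P,Q\in\mathbb{C}[G_c]$ both $(B\times H_c)$-semi-invariant of one common weight $(\theta,\chi)$, $\theta$ a dominant weight of $G_c$. Then the products $P^{i}Q^{k-i}$ ($0\le i\le k$) are $(B\times H_c)$-semi-invariant of weight $(k\theta,k\chi)$ and linearly independent---because $\phi$ is nonconstant, hence transcendental over $\mathbb{C}$---so the $(k\theta,k\chi)$-semi-invariant subspace of $\mathbb{C}[G_c]$ has dimension at least $k+1$. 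Via the Peter--Weyl decomposition $\mathbb{C}[G_c]=\bigoplus_{\sigma\in\widehat G_{\operatorname{alg}}}V_\sigma\boxtimes V_\sigma^*$ this subspace has dimension $c_H(\sigma_k,\chi_k)$ for a representation $\sigma_k\in\widehat G_{\operatorname{alg}}$ with highest weight determined by $k\theta$ and a character $\chi_k\in X^*(H_c)$; thus $c_H(\sigma_k,\chi_k)\ge k+1$ for all $k$ and $\sup_{\dim\tau=1}\sup_\sigma c_H(\sigma,\tau)=\infty$, i.e.\ (iii)$'$ fails. When $H$ is reductive, $G_c/H_c$ is affine, so $\phi$ is a ratio of regular functions on $G_c/H_c=\operatorname{Spec}\mathbb{C}[G_c]^{H_c}$, the weight $\chi$ can be taken trivial throughout, and the same computation gives $\sup_\sigma c_H(\sigma,\mathbf{1})=\infty$, i.e.\ (iv)$'$ fails.

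The step I expect to demand the most care is the last identification in the non-reductive case: when $G_c/H_c$ is not quasi-affine the trivial character alone no longer detects sphericity, so one must genuinely let $\chi$ vary and work inside $\mathbb{C}[G_c]$, where finite generation of the ring of $N\times\bigl(\bigcap_{\chi}\ker\chi\bigr)$-semi-invariants---a Grosshans-type statement---is what guarantees that the weights $(k\theta,k\chi)$ arising above really come from honest representations of $G_c$. All of this is classical, due to Vinberg--Kimelfeld \cite{VK}; in the write-up I would invoke \cite{VK} (and \cite{Br, Kr2} for the sharp form (ii)) and spell out only the reductions (iii)$'\Rightarrow$(i) and (iv)$'\Rightarrow$(i) that are actually needed here.
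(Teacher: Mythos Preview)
Your proof is correct, and its core mechanism---linear independence of the monomials $P^iQ^{k-i}$ in $\mathbb{C}[G_c]$---is exactly the device the paper uses. The organisation differs, however. The paper factors the hard direction as (iii)$'\Rightarrow$(iii) followed by (iii)$\Rightarrow$(i): for the first it assumes $c_H(\sigma,\tau)\ge 2$ for some one-dimensional $\tau$, takes two independent highest weight vectors $f_1,f_2\in\mathcal{O}(G_c)$ with the given $(B\times H_c)$-weight, and shows that $f_1^{\,i}f_2^{\,N-i}$ ($0\le i\le N$) are independent (using that $\mathcal{O}(G_c)$ has no zero-divisors), whence $c_H(\sigma_N,\tau^N)\ge N+1$; the second step it simply cites from Vinberg--Kimelfeld. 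You instead prove $\lnot$(i)$\Rightarrow\lnot$(iii)$'$ in one stroke, producing your two semi-invariants $P,Q$ from Rosenlicht's theorem and a coprimality argument rather than from the hypothesis $c_H(\sigma,\tau)\ge 2$. In effect you have unpacked the cited \cite{VK} argument and made the lemma self-contained, at the cost of invoking Rosenlicht; the paper keeps things shorter by isolating the elementary reduction (iii)$'\Rightarrow$(iii) and delegating the genuinely geometric step to the literature.

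One small comment: the worry in your final paragraph about a ``Grosshans-type statement'' is unnecessary. The weight $\theta$ of a left $N$-invariant regular function on $G_c$ is automatically dominant (such a function is a highest weight vector in the Peter--Weyl decomposition), so $k\theta$ is the highest weight of a genuine $\sigma_k\in\widehat G_{\operatorname{alg}}$, and $\chi^k$ is a character of $H_c$ because $\chi$ is; nothing more is needed to interpret the $(k\theta,k\chi)$-weight space as $c_H(\sigma_k,\chi^k)$.
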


\begin{proof}
The following implications are obvious:
\begin{alignat*}{5}
&\text{(ii)}\,\, &&\Rightarrow\,\, && \text{(iii)} \,\,&& \Rightarrow \,\, &&\text{(iv)}
\\
&\,\Downarrow &&  && \,\Downarrow && && \,\Downarrow
\\
&\text{(ii)}' \,\,&&\Rightarrow \,\,&&\text{(iii)}'\,\,&& \Rightarrow \,\,&&\text{(iv)}' 
\end{alignat*}
The implication (i) $\Rightarrow$ (ii) follows easily from the
Borel--Weil theorem.  
The non-trivial part is (iii) $\Rightarrow$ (i)
 (or (iv) $\Rightarrow$ (i)), 
 which was proved in Vinberg--Kimelfeld \cite {VK}.

Let us show the remaining (and easy)
 implication (iii)$'$ $\Rightarrow$ (iii) 
 (or (iv)$'$ $\Rightarrow$ (iv)).  
Suppose 
 $c_H(\sigma, \tau) \ge 2$
 for some $\sigma \in \widehat G_{\operatorname{alg}}$
 and $\tau \in \widehat H_{\operatorname{alg}}$
 with $\dim \tau =1$.  
Then we can find two linearly independent
 highest weight vectors $f_1$, $f_2 \in {\mathcal {O}}(G_c)$
 such that 
$
    f_j (b^{-1}gh)= \chi_{\sigma} (b) \tau (h^{-1}) f_j (g)
$
  ($j=1,2$)
 for any $b \in B$, $h \in H_c$, 
and $g \in G_c$
 where $\chi_{\sigma}$ corresponds to a highest weight of $\sigma$.  
We claim that $f_1^{i} f_2^{N-i}$
 ($0 \le i \le N$)
 are linearly independent.  
Indeed,
 suppose $a_0 f_1^N + a_1 f_1^{N-1} f_2 + \cdots + a_N f_2^N=0$
 is a linear dependence.  
Let $\lambda$ be a zero 
 of the equation $a_0 t^N + a_1 t^{N-1} + \cdots +a_N=0$.  
Since the ring ${\mathcal{O}}(G_c)$
 has no divisor,
 we have $f_1 - \lambda f_2 =0$,
 which contradicts to the linear independence
 of $f_1$ and $f_2$.  
Therefore, 
 we have $c_H(\sigma_N, \tau^N) \ge N+1$
 where $\sigma_N \in \widehat G_{\operatorname{alg}}$
 is defined to have a highest weight $\chi_{\sigma}^{N}$.  
Hence (iii)$'$ $\Rightarrow$ (iii) is shown.  
The implication (iv)$'$ $\Rightarrow$ (iv) is 
immediate by putting
 $\tau = {\bf{1}}$.  
\end{proof}

We have thus completed the proof of Theorem \ref{thm:B}.

\section{Restriction of irreducible  representations}
\label{sec:6}

In this section we discuss the restriction
 of an admissible irreducible
representation $\pi$ of a semisimple Lie group with respect to a
reductive subgroup $H$, 
and give a proof of Theorems \ref{thm:C}
  and \ref{thm:D}
 on geometric criteria for finiteness
 and boundedness
 of the dimension of 
$\Hom_H(\pi|_H,\tau)$,
the space of continuous $H$-homomorphisms
for $\tau\in\widehat{H}_{\operatorname {ad}}$.

In dealing with the restrictions
 of admissible representations
 which are not necessarily unitary, 
 we work mostly in the framework
 of smooth representations.
We begin with an elementary observation:
\begin{lemma}\label{lem:adsmooth}
Suppose $(\pi,V_\pi)\in\widehat{G}_{\operatorname {ad}}$ and
$(\tau,V_\tau)\in\widehat{H}_{\operatorname {ad}}$.
Then we have a natural injective map
$$
 \Hom_H(V_\pi,V_\tau) \to \Hom_H(V_\pi^\infty,V_\tau^\infty),
 \quad  \varphi \mapsto \varphi|_{V_\pi^\infty}.
$$
\end{lemma}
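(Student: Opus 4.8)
The plan is to check that $\varphi\mapsto\varphi|_{V_\pi^\infty}$ is well defined, i.e.\ that it actually takes values in $\Hom_H(V_\pi^\infty,V_\tau^\infty)$, and then to observe that it is injective. Both points are elementary; the only mild content is the first one.

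\textbf{Well-definedness.} Fix $\varphi\in\Hom_H(V_\pi,V_\tau)$, so $\varphi$ is continuous and $H$-equivariant, and let $v\in V_\pi^\infty$. Since $v$ is a smooth vector for $\pi$, the orbit map $G\to V_\pi$, $g\mapsto\pi(g)v$, is of class $C^\infty$; restricting it to the closed subgroup $H$ and postcomposing with the continuous linear map $\varphi$ yields that $h\mapsto\varphi(\pi(h)v)=\tau(h)\varphi(v)$ is a $C^\infty$ map $H\to V_\tau$ --- here one uses the elementary fact that the composite of a smooth vector-valued map with a continuous linear operator is again smooth. Hence $\varphi(v)\in V_\tau^\infty$, so $\varphi(V_\pi^\infty)\subseteq V_\tau^\infty$. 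The restricted map is clearly $H$-equivariant, and it is continuous for the natural Fr\'echet topologies on $V_\pi^\infty$ and $V_\tau^\infty$: differentiating the relation $\varphi\scirc\pi(h)=\tau(h)\scirc\varphi$ gives $d\tau(u)\varphi(v)=\varphi(d\pi(u)v)$ for all $u\in U(\mathfrak{h})$ and $v\in V_\pi^\infty$, and combining this with $\varphi$ being continuous on $V_\pi$ and with the description of the topology on spaces of smooth vectors in terms of the seminorms $v\mapsto p\bigl(d\tau(u)v\bigr)$ yields the required continuity. Thus $\varphi|_{V_\pi^\infty}\in\Hom_H(V_\pi^\infty,V_\tau^\infty)$.

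\textbf{Injectivity.} This is immediate: $V_\pi^\infty$ is a dense subspace of $V_\pi$, so if $\varphi|_{V_\pi^\infty}=0$ then the continuous map $\varphi$ vanishes on a dense set and hence identically.

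I do not anticipate any genuine obstacle; the single point requiring (routine) care is that $\varphi$ sends $G$-smooth vectors to $H$-smooth vectors and does so continuously for the finer topologies, which is a formal consequence of the stability of smoothness of vector-valued maps under postcomposition with continuous linear operators together with the standard topological description of $V_\tau^\infty$.
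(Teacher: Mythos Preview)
Your proof is correct and follows essentially the same approach as the paper's: a $G$-smooth vector is $H$-smooth, a continuous $H$-intertwiner sends $H$-smooth vectors to $H$-smooth vectors, and injectivity follows from the density of $V_\pi^\infty$ in $V_\pi$. You supply more detail than the paper does---in particular, you verify continuity of $\varphi|_{V_\pi^\infty}$ for the Fr\'echet topologies, which the paper leaves implicit---but the underlying argument is identical.
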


\begin{proof}
Let $\varphi: V_\pi \to V_\tau$ be a continuous $H$-homomorphism.
If $v$ is a smooth vector of $V_\pi$ as a representation of $G$,
then $v$ is a smooth vector for the representation $\pi|_H$ of the subgroup $H$,
and consequently, so is $\varphi(v)$ for $\tau$.
Since $V_\pi^\infty$ is dense in $V_\pi$,
$\varphi \mapsto \varphi|_{V_\pi^\infty}$ is injective.
\end{proof}

Let $\Delta H$ denote the diagonal subgroup 
 $\{(h,h): h \in H\}$
 in $G \times H$.  
The next lemma reduces the problem of the restriction to a problem on the induced
representation for
which we have already solved in Sections~\ref{sec:2} and \ref{sec:3}:
\begin{lemma}
\label{lem:smooth}
For any $\pi\in\widehat{G}_\infty$ and $\tau\in\widehat{H}_\infty$,
there is a natural bijection
\[
\Hom_H(\pi|_H,\tau)
\simeq
\Hom_{G\times H} (\pi\times \sigma,C^\infty(G\times H/\Delta H)).
\]
Here, 
$\tau^{*}$ is the contragredient representation
 of $H$ on the continuous dual
 (the space of distribution vectors), 
and $\sigma$ denotes its smooth representation $(\tau^*)^\infty$.  
\end{lemma}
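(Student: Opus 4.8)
The plan is to realize both sides as spaces of generalized matrix coefficients on $G$ and to exhibit an explicit pair of mutually inverse linear maps, isolating the one genuine analytic point as the compatibility between the smooth and the distribution globalizations of $\tau$. First I would set up a concrete model for the right‑hand side: the map $(g,h)\Delta H\mapsto gh^{-1}$ is a diffeomorphism $G\times H/\Delta H\xrightarrow{\ \sim\ }G$ intertwining the $G\times H$‑action with $(g_0,h_0)\cdot x=g_0xh_0^{-1}$, so that $C^\infty(G\times H/\Delta H)\cong C^\infty(G)$ with $G$ acting by left translations and $H$ (the right factor) by right translations, $((g_0,h_0)f)(x)=f(g_0^{-1}xh_0)$. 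Under this identification the right‑hand side becomes the space of continuous linear maps $\Phi\colon V_\pi\otimes V_\sigma\to C^\infty(G)$ with $\Phi(\pi(g_0)v\otimes\sigma(h_0)w)(x)=\Phi(v\otimes w)(g_0^{-1}xh_0)$, and by continuity it suffices to test such a condition on the algebraic tensor product.

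Next I would produce the map from $\Hom_H(\pi|_H,\tau)$ to matrix coefficients. Given $T\in\Hom_H(\pi|_H,\tau)$ I set
\[
\Phi_T(v\otimes w)(x):=\langle T(\pi(x^{-1})v),\,w\rangle,\qquad v\in V_\pi,\ w\in V_\sigma,\ x\in G,
\]
the bracket being the canonical pairing of $V_\tau$ with its continuous dual, in which $V_\sigma$ sits. Smoothness of $x\mapsto\pi(x^{-1})v$ together with continuity of $T$ and of the pairing shows $\Phi_T(v\otimes w)\in C^\infty(G)$ and that $\Phi_T$ is continuous; the $G$‑equivariance is immediate, and the $H$‑equivariance is precisely the intertwining relation $\tau(h)T=T\pi(h)$ combined with $\sigma(h)=\tr\tau(h^{-1})$ on $V_\sigma$.

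For the inverse, given $\Phi$ on the right‑hand side I would evaluate at the base point: $\ell_\Phi(v\otimes w):=\Phi(v\otimes w)(e)$ is a continuous bilinear form, and the $\Delta H$‑equivariance of $\Phi$ read at $x=e$ says exactly $\ell_\Phi(\pi(h)v\otimes\sigma(h)w)=\ell_\Phi(v\otimes w)$. Hence $v\mapsto\ell_\Phi(v\otimes\,\cdot\,)$ is a continuous $H$‑equivariant map $T_\Phi\colon V_\pi\to(V_\sigma)'$, where $(V_\sigma)'$ with its contragredient action is the distribution globalization $\tau^{-\infty}$ of $\tau$. Using the $G$‑equivariance of $\Phi$ once more one recovers $\Phi(v\otimes w)(x)=\langle T_\Phi(\pi(x^{-1})v),\,w\rangle$, so that $T\mapsto\Phi_T$ and $\Phi\mapsto T_\Phi$ are mutually inverse, provided one knows that $T_\Phi$ actually takes values in $V_\tau$ rather than merely in $\tau^{-\infty}$.

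That last point is where the work lies, and I expect it to be the only real obstacle. For $v\in V_\pi$ the orbit map $h\mapsto\tau^{-\infty}(h)T_\Phi(v)=T_\Phi(\pi(h)v)$ is smooth, because $h\mapsto\pi(h)v$ is smooth into $V_\pi$ and $T_\Phi$ is continuous; therefore $T_\Phi(v)$ is a smooth vector of $\tau^{-\infty}$, whose space of smooth vectors is $V_\tau$. This identification, as well as the identification of $(V_\sigma)'$ with $\tau^{-\infty}$ used above, rests on the Casselman--Wallach theory of smooth globalizations (via the characterization of smooth vectors by smoothness of the orbit map); everything else in the argument is a direct verification. Naturality of the resulting isomorphism in $\pi$ and in $\tau$ is clear from the explicit formulas.
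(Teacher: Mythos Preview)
Your argument is correct and follows essentially the same route as the paper: both construct the forward map by sending $T$ to the matrix-coefficient function $\langle T(\pi(\cdot)^{-1}v),w\rangle$, and both construct the inverse by evaluating at the identity to land in $V_\sigma^{\,*}$, then invoke reflexivity of the Casselman--Wallach globalization to conclude that the image actually lies in $V_\tau$. The only cosmetic difference is that you first identify $G\times H/\Delta H$ with $G$ via $(g,h)\mapsto gh^{-1}$, whereas the paper works directly on $G\times H$; and you spell out explicitly why $T_\Phi(v)$ is a smooth vector (smoothness of the orbit map $h\mapsto T_\Phi(\pi(h)v)$), a step the paper leaves implicit.
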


\begin{proof}
We write $V_\pi$, $V_\tau$, and $V_{\sigma}$ for the
representation spaces of the smooth representations $\pi$, $\tau$, and 
$\sigma$, respectively.

Suppose $\varphi: V_\pi\to V_\tau$ is a continuous
$H$-homomorphism.
We define a continuous map 
$\Phi: V_\pi\times V_{\sigma} \times G\times H \to \mathbb{C}$
by
\[
\Phi(v,u;g,h) := \langle\varphi(\pi(g^{-1})v),\sigma(h^{-1})u\rangle.
\]
Then the induced map
$(v,u) \mapsto \Phi(v,u;\cdot,\cdot)$
gives a continuous $(G\times H)$-homomorphism from
$V_\pi\times V_{\sigma}$ to
$C^\infty(G\times H/\Delta H)$.

Conversely, suppose
$\Psi: V_\pi \times V_\sigma \to C^\infty(G\times H/\Delta H)$
is a continuous $(G\times H)$-homomorphism.
Then the linear map
\[
\psi: V_\pi \to V_\sigma^*,
\ 
v \mapsto \Psi(v,\cdot)(e,e)
\]
is a continuous $H$-homomorphism,
and therefore,
its image is contained in the subspace
$(W_{\sigma}^*)^\infty$ of $W_\sigma^*$.
Since every smooth representation $\tau$ of $H$
is reflexive, i.e.
 $(W_{\sigma}^*)^{\infty} \simeq V_{\tau}$,
we have now shown Lemma.
\end{proof}
Combining Lemma \ref{lem:adsmooth}
 with Lemma \ref{lem:smooth}, 
 we get
\begin{align}
\dim \Hom_H (\pi|_H, \tau) \le& \dim \Hom_H(\pi^{\infty}|_H, \tau^{\infty})
\notag
\\
=& \dim \Hom_{G \times H}(\pi^{\infty}\times (\tau^*)^{\infty}, C^{\infty}(G \times H/\Delta H)).  
\label{eqn:GHH}
\end{align}

Now Theorems \ref{thm:C} and \ref{thm:D} follow from \eqref{eqn:GHH}
 and Theorems \ref{thm:A} and \ref{thm:B}
 in light of the following elementary observation:  

\begin{lemma}
\label{lem:PPHP}
{\rm{1)}}\enspace
The condition \eqref{eqn:PP} holds for the pair $(G,H)$
 if and only if the condition \eqref{eqn:HP} holds
 for $(G \times H, \Delta H)$.  
\par\noindent
{\rm{2)}}\enspace
The condition \eqref{eqn:BB} holds for $(G,H)$
 if and only if the condition \eqref{eqn:HB}
 holds for $(G \times H, \Delta H)$.  
\end{lemma}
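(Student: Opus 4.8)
The plan is to deduce both equivalences from a single elementary fact about diagonal actions on a product having a homogeneous factor; recall that conditions \eqref{eqn:PP} and \eqref{eqn:BB} are considered only when $H$ is reductive, so $G\times H$ is reductive as well. First I would normalize the choice of minimal parabolic and Borel subgroups of the product group: $P\times P_H$ is a minimal parabolic subgroup of $G\times H$ and $B\times B_H$ is a Borel subgroup of $G_c\times H_c$, so that the real flag variety of $G\times H$ is $(G/P)\times(H/P_H)$ and the complex flag variety of $G_c\times H_c$ is $(G_c/B)\times(H_c/B_H)$. Since conditions \eqref{eqn:HP} and \eqref{eqn:HB} do not depend on the choice of parabolic or Borel subgroups, this costs nothing. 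I would also record that the complexification of the diagonal subalgebra $\{(X,X):X\in\mathfrak{h}\}$ of $\mathfrak{g}\oplus\mathfrak{h}$ is the diagonal $\{(Z,Z):Z\in\mathfrak{h}_c\}$ of $\mathfrak{g}_c\oplus\mathfrak{h}_c$; hence \eqref{eqn:HB} for $(G\times H,\Delta H)$ asserts exactly the existence of an open $\Delta(H_c)$-orbit on $(G_c/B)\times(H_c/B_H)$, and likewise \eqref{eqn:HP} for $(G\times H,\Delta H)$ asserts the existence of an open $\Delta H$-orbit on $(G/P)\times(H/P_H)$.

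The key step is the following claim, which I would apply once with $(H,X,Q)=(H,\,G/P,\,P_H)$ and once with $(H,X,Q)=(H_c,\,G_c/B,\,B_H)$: for a closed subgroup $Q$ of a group $H$ acting on a manifold $X$, the diagonal $H$-action on $X\times(H/Q)$ has an open orbit if and only if $Q$ has an open orbit on $X$. Two observations prove this. First, every $H$-orbit in $X\times(H/Q)$ meets the slice $X\times\{eQ\}$, since $h^{-1}\cdot(x,hQ)=(h^{-1}x,eQ)$. Second, the infinitesimal action identifies $T_{(x,eQ)}\bigl(X\times(H/Q)\bigr)$ with $T_xX\oplus(\mathfrak{h}/\mathfrak{q})$, with $Z\in\mathfrak{h}$ acting by $Z\mapsto(Z\cdot x,\ Z\bmod\mathfrak{q})$; a short computation shows this map is surjective precisely when $\mathfrak{q}\cdot x=T_xX$, i.e. when $Q$ has an open orbit through $x$. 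Combining the two observations, and using that an orbit is open exactly when it is of full dimension, gives the claim. Feeding in the identifications from the first paragraph then yields the equivalence of \eqref{eqn:PP} for $(G,H)$ with \eqref{eqn:HP} for $(G\times H,\Delta H)$, and of \eqref{eqn:BB} for $(G,H)$ with \eqref{eqn:HB} for $(G\times H,\Delta H)$, which is the assertion of the lemma.

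I do not anticipate a genuine obstacle. The only bookkeeping points are that parabolic (resp.\ Borel) subgroups of a product of reductive groups are products of parabolic (resp.\ Borel) subgroups, and that forming the diagonal commutes with complexification; both are immediate. All the content lies in the tangent-space computation of the second paragraph, and phrasing it infinitesimally---equating \emph{open} with \emph{full-dimensional} via the fact that an immersion between manifolds of equal dimension is a local diffeomorphism---keeps it clean and sidesteps any question of local closedness of orbits.
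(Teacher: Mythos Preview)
Your argument is correct. The core idea---reducing the diagonal $\Delta H$-action on $(G/P)\times(H/P_H)$ to the $P_H$-action on $G/P$---is exactly what the paper does, but the paper compresses the whole thing into a single line: it simply records the natural bijection
\[
(P\times P_H)\backslash(G\times H)/\Delta H \;\simeq\; P\backslash G/P_H
\]
(and the analogous one over $\mathbb{C}$), induced by $(g,h)\mapsto gh^{-1}$. Since this map is a surjective submersion intertwining the two double-coset decompositions, open orbits correspond to open orbits, and the lemma follows immediately. Your slice-plus-tangent-space computation is a perfectly valid way to unpack the same fact, and has the virtue of making explicit \emph{why} openness is preserved; the paper's version trades that explicitness for brevity.
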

\begin{proof}
{\rm{1)}}\enspace
$P \times P_H$ is a minimal parabolic subgroup of $G \times H$.  
The claim follows from the natural bijection
$
  (P \times P_H) \backslash (G \times H)/ \Delta H \simeq P \backslash G/P_H.  
$
{\rm{2)}}\enspace
Similarly,
 $B \times B_H$ is a Borel subgroup
 of $G_c \times H_c$, 
 and the claim follows from the bijection
$
     (B \times B_H) \backslash (G_c \times H_c) / \Delta H_c
     \simeq
     B \backslash G_c / B_H.
$  
\end{proof}

In the case where $\pi$ is unitary, 
we can decompose the restriction $\pi|_H$ into the direct integral of
irreducible unitary representations of $H$,
and such a decomposition (\textit{branching law}) is unique
as $H$ is of type I in the sense of von Neumann algebras.
We denote by $\widehat{G}$ the set of (unitary) equivalence classes of
 irreducible unitary representations of $G$.  
For $(\pi, V_\pi) \in \widehat{G}$, $(\tau, W_\tau) \in \widehat{H}$, 
$
  \varphi \in \Hom_{H} (\tau, \pi|_H)$
 gives an irreducible summand
 $\varphi (W_\tau)$ in $V_{\pi}$. 
 
As an immediate corollary of Theorems \ref{thm:C} and \ref{thm:D},
we give an upper bound of the multiplicity
 in the discrete part:
\begin{theorem}
\label{thm:6.1}
Suppose $(G,H)$ is a pair of reductive Lie groups.

\noindent
{{\rm 1)}}
 If there is an open $P_H$-orbit on $G/P$,
 then
 $\dim \Hom_H(\tau, \pi|_{H})<\infty$ 
 for any $\pi \in \widehat{G}$ and $\tau \in \widehat{H}$.

\noindent
{{\rm 2)}}
 If there is an open $B_H$-orbit on $G_c/B$,
 $\underset{\pi \in \widehat{G}, \tau \in \widehat{H}}\sup
 \dim \Hom_H(\tau, \pi|_{H})<\infty$. 
\end{theorem}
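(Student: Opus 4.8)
The plan is to obtain Theorem~\ref{thm:6.1} as a formal consequence of Theorems~\ref{thm:C} and~\ref{thm:D}. The representations occurring in Theorem~\ref{thm:6.1} are unitary and irreducible, hence admissible in the sense of Definition~\ref{def:adm} (finite length is automatic, and finiteness of $K$-multiplicities is Harish-Chandra's theorem), so $\widehat{G}\subset\widehat{G}_{\operatorname{ad}}$ and $\widehat{H}\subset\widehat{H}_{\operatorname{ad}}$; moreover the hypothesis in part~1) is precisely condition \eqref{eqn:PP} and the hypothesis in part~2) is precisely condition \eqref{eqn:BB} for the pair $(G,H)$. The only new ingredient needed is the passage from a homomorphism $\tau\to\pi|_H$ to a homomorphism $\pi|_H\to\tau$, i.e.\ from the multiplicity $\dim\Hom_H(\tau,\pi|_H)$ (the quantity in Theorem~\ref{thm:6.1}) to the multiplicity $\dim\Hom_H(\pi|_H,\tau)$ (the quantity treated in Theorems~\ref{thm:C} and~\ref{thm:D}); for unitary representations this is supplied by taking Hilbert space adjoints.

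First I would make the adjoint step precise. Realize $\pi\in\widehat{G}$ on a Hilbert space $V_\pi$ and $\tau\in\widehat{H}$ on a Hilbert space $W_\tau$, and let $T\colon W_\tau\to V_\pi$ be a bounded $H$-intertwining operator, so $T\,\tau(h)=\pi(h)\,T$ for all $h\in H$. Its Hilbert space adjoint $T^{*}\colon V_\pi\to W_\tau$ is again bounded, and taking adjoints in the intertwining relation together with the unitarity identities $\tau(h)^{*}=\tau(h)^{-1}$, $\pi(h)^{*}=\pi(h)^{-1}$ gives $\tau(h)\,T^{*}=T^{*}\,\pi(h)$ for all $h\in H$; thus $T^{*}\in\Hom_H(\pi|_H,\tau)$. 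The assignment $T\mapsto T^{*}$ is conjugate-linear and its own inverse, hence a bijection, so
\[
  \dim\Hom_H(\tau,\pi|_H)=\dim\Hom_H(\pi|_H,\tau),
\]
where on both sides $\Hom_H$ denotes the space of continuous (equivalently bounded) $H$-maps between the underlying Hilbert spaces, which is exactly the space of $H$-intertwining operators meant in Theorems~\ref{thm:C} and~\ref{thm:D}.

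Granting this identity, part~1) follows at once: under \eqref{eqn:PP}, Theorem~\ref{thm:C}~1) gives $\dim\Hom_H(\pi|_H,\tau)<\infty$ for every $\pi\in\widehat{G}_{\operatorname{ad}}$ and $\tau\in\widehat{H}_{\operatorname{ad}}$, in particular for $\pi\in\widehat{G}$ and $\tau\in\widehat{H}$, and the displayed identity transports this to $\dim\Hom_H(\tau,\pi|_H)<\infty$. Part~2) is the same argument with Theorem~\ref{thm:D}~1) in place of Theorem~\ref{thm:C}~1): under \eqref{eqn:BB} one obtains a single constant bounding $\dim\Hom_H(\pi|_H,\tau)$ uniformly over $\widehat{G}_{\operatorname{ad}}\times\widehat{H}_{\operatorname{ad}}$, hence over $\widehat{G}\times\widehat{H}$, and the identity transports the bound. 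I do not expect any real obstacle: the entire analytic content sits inside Theorems~\ref{thm:C} and~\ref{thm:D} (and thereby in the upper bound Theorem~\ref{thm:2.1}), while what is added here is the elementary adjoint bijection; the only step deserving a word of care is the verification that the $\Hom$-spaces being compared genuinely coincide, which holds because a unitary representation is already an admissible representation on a complete locally convex space and so no re-globalization is involved.
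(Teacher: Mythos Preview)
Your proposal is correct and follows essentially the same route as the paper: the paper's proof consists of the single observation that taking Hilbert space adjoints yields an anti-linear bijection $\Hom_H(V_\tau,V_\pi)\simeq\Hom_H(V_\pi,V_\tau)$, after which the result is read off from the restriction theorems (the paper cites Theorems~\ref{thm:B} and~\ref{thm:D}, though Theorems~\ref{thm:C} and~\ref{thm:D} are the ones actually matching the hypotheses \eqref{eqn:PP} and \eqref{eqn:BB}, exactly as you invoke them). Your write-up simply spells out the adjoint computation and the admissibility of unitary irreducibles in more detail than the paper does.
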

\begin{proof}
[Proof of Theorem \ref{thm:6.1}]
Since the adjoint map gives an anti-linear bijection 
\[
  \Hom_H(V_{\tau}, V_{\pi}) \simeq \Hom_H(V_{\pi}, V_{\tau}), 
\]
Theorem \ref{thm:6.1} follows from Theorems \ref{thm:B}
 and \ref{thm:D}.  
\end{proof}
\begin{remark}
{\rm 1)}
Theorem \ref{thm:6.1} 2) was announced in this form
 in \cite[Remark 2.10]{Kb2II}.  
See \cite{A, Kb4, SZ} for recent results without unitarity.  

\noindent
{\rm 2)}
If $H = K$ a maximal compact subgroup of $G$,
 then the assumption of Theorem~\ref{thm:6.1}~1)
 is obviously satisfied
 because $P_H = K$ and $K P = G$.
In this case $\dim \tau < \infty$
 for any $\tau \in \widehat K$.  
This simplest case gives an analytic proof 
 to the celebrated result of Harish-Chandra asserting that
{\sl{any irreducible unitary representation is admissible}}
 (using a theorem of I. Segal on the existence
 of infinitesimal characters
 of irreducible unitary representations).

\noindent
{\rm 3)}
Even if \eqref{eqn:PP} fails,
 it may happen
 that $\dim \Hom_H(\tau, \pi|_H)<\infty$
 for any $\tau \in \widehat H$
 for a specific triple $(\pi, G, H)$.  
This was studied in details in \cite{Kb2I,Kb2II,Kb2III}
 when the decomposition is discretely decomposable. 
\end{remark}

\section{Appendix}
\subsection{Proof of Lemma \ref{lem:algH}}

\begin{lemma}
\label{lem:7.1}
Let $H_c$ be a complex algebraic group 
 acting on a smooth complex variety $X$ by 
$\Psi:\ H_c\times X\ni(g,x)\mapsto gx\in X$. 
Then there exists a locally closed submanifold $Y$ of $X$ in the Zariski 
topology such that the following two conditions holds:

\noindent
{\rm 1)} $\Psi|_{H_c\times Y}$ is a submersion.

\noindent
{\rm 2)} $\# \{x \in Y: H_c x = H_c y\}$ is finite and does
not depend on $y\in Y$.
\end{lemma}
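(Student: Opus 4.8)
The plan is to construct $Y$ as a generic transverse slice to the $H_c$-orbits.

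\textbf{Reductions.} First I would pass to a single irreducible component of $X$, and then to the open dense $H_c$-invariant locus on which the orbit dimension attains its maximal value $d$: this locus is open because $x\mapsto\dim H_c x$ is lower semicontinuous in the Zariski topology (equivalently $x\mapsto\dim (H_c)_x$ is upper semicontinuous), and it is clearly $H_c$-invariant. So I may assume $X$ is irreducible and every $H_c$-orbit has dimension $d$. Next I would invoke Rosenlicht's theorem to obtain a dense open $H_c$-invariant $U\subseteq X$ carrying a geometric quotient $q\colon U\to Q$ with $Q$ an irreducible variety of dimension $\dim X-d$, and replace $X$ by $U$; now the fibres of $q$ are exactly the $H_c$-orbits. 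The reason for introducing $q$ is that it converts the orbit relation ``$H_cx=H_cy$'' into ``$q(x)=q(y)$'', so that the orbit-saturated open subsets are precisely the sets $q^{-1}(V)$ with $V\subseteq Q$ open; this is what will let me control the second condition of the lemma globally, not merely generically.

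\textbf{A first slice satisfying the submersion condition.} Fix a point $x_0\in X$ (smooth, since $X$ is). The tangent space $T_{x_0}(H_cx_0)$ is $d$-dimensional, and a Bertini-type argument in an affine neighbourhood of $x_0$ (intersecting with a generic affine-linear subspace of codimension $d$) produces a smooth irreducible locally closed $Z\ni x_0$ with $\dim Z=\dim X-d$ and $T_{x_0}Z\oplus T_{x_0}(H_cx_0)=T_{x_0}X$. Then the differential of $\Psi|_{H_c\times Z}$ at $(e,x_0)$ is surjective, so $\Psi|_{H_c\times Z}$ is submersive on an open neighbourhood of $(e,x_0)$. This submersion locus is open in $H_c\times Z$ and invariant under left translation in the $H_c$-factor (by $H_c$-equivariance of $\Psi$), hence equals $H_c\times Z'$ for some open $Z'\subseteq Z$ with $x_0\in Z'$. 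Put $Y_0:=Z'$; then $\Psi|_{H_c\times Y_0}$ is a submersion, and in particular for every $y\in Y_0$ the orbit $H_cy$ has dimension $d$ and $T_yY_0\oplus T_y(H_cy)=T_yX$.

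\textbf{Shrinking to get the constancy condition.} For $y\in Y_0$ the set $Y_0\cap H_cy=(q|_{Y_0})^{-1}(q(y))$ is a transverse intersection inside $X$ of $Y_0$ (dimension $\dim X-d$) with the orbit $H_cy$ (dimension $d$), hence $0$-dimensional and therefore finite; thus $q|_{Y_0}\colon Y_0\to Q$ is quasi-finite, and since its generic fibre is finite we get $\dim q(Y_0)=\dim Y_0=\dim Q$, so $q|_{Y_0}$ is dominant. By Chevalley's theorem the fibre-cardinality function is constructible, so it equals a constant $c\ge 1$ on a dense open $Q_1\subseteq q(Y_0)$. I would then set
\[
  Y:=Y_0\cap q^{-1}(Q_1),
\]
which is open in $Y_0$, hence smooth and locally closed in $X$, and nonempty. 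The first condition persists because $\Psi|_{H_c\times Y}$ is the restriction of the submersion $\Psi|_{H_c\times Y_0}$ to an open subset of its source. For the second, if $y\in Y$ then $H_cy=q^{-1}(q(y))\subseteq q^{-1}(Q_1)$, so $\{x\in Y: H_cx=H_cy\}=Y_0\cap H_cy=(q|_{Y_0})^{-1}(q(y))$ has cardinality $c$, independent of $y$.

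\textbf{Main obstacle.} The transversality (first) condition propagates cheaply: it suffices to arrange it at one point and then spread out by $H_c$-equivariance. The real difficulty is the \emph{constancy} in the second condition, since the number of slice-points in an orbit is only generically constant along $Y_0$, and a careless shrinking of $Y_0$ can delete points of some intersection $Y_0\cap H_cy$. Passing through a geometric quotient is precisely what repairs this: the good open set $q^{-1}(Q_1)$ is automatically orbit-saturated, so shrinking $Y_0$ to $Y_0\cap q^{-1}(Q_1)$ removes whole fibres at a time and never disturbs a surviving one. The remaining bookkeeping — that each shrinking step preserves smoothness, local closedness, nonemptiness, and the submersion property — is immediate, since it only concerns passage to open subsets.
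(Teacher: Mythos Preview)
Your proof is correct and follows the same two-step plan as the paper: first construct a transverse slice through a point of maximal orbit dimension (so that $\Psi|_{H_c\times Y}$ is a submersion, using that surjectivity of the differential at $(e,y)$ propagates to all $(h,y)$ by $H_c$-equivariance), then shrink $Y$ in the Zariski topology to make the orbit-intersection count constant. The difference lies only in how the second step is carried out. The paper avoids Rosenlicht's theorem and works directly with the incidence variety
\[
\tilde Y=\{(x,y)\in Y\times Y: H_cx=H_cy\},
\]
noting that the projection $\tilde Y\to Y$ has discrete fibres and is therefore an $m$-fold covering over some Zariski open $Y'\subseteq Y$; one then replaces $Y$ by $Y'$. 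You instead pass through a geometric quotient $q:X\to Q$ and shrink via the orbit-saturated preimage $q^{-1}(Q_1)$. Your route costs the extra input of Rosenlicht, but it makes completely explicit the point you flag as the main obstacle --- that a careless shrinking of $Y$ could discard some but not all points of $Y\cap H_cy$ --- since $q^{-1}(Q_1)$ automatically removes whole orbits at a time. The paper's argument handles the same issue more implicitly, relying on the symmetry of $\tilde Y$ (the fibre-cardinality $y\mapsto|Y\cap H_cy|$ is constant on equivalence classes). Both approaches are valid; yours is a bit heavier but more self-contained on the saturation point.
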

\begin{proof}
Let $\ell$ be the minimal codimension of the submanifold $H_cx$ for $x\in X$.
Fix $p\in X$ such that the codimension of $H_c p$ is equal to $\ell$.
Let $Y$ be an $\ell$-dimensional locally closed submanifold of $X$ through $p$ 
such that $d\Psi|_{H_c\times Y}$ is surjective at $(e,p)$.
By shrinking $Y$ if necessary,
 we may assume that 
 $d \Psi|_{H_c \times Y}$ surjects $T_yY$ 
 at $(e,y)$ for all $y \in Y$.  
Since the surjectivity of $d\Psi|_{H_c\times Y}$ at $(e,y)$ 
implies that of $d\Psi|_{H_c\times Y}$ at $(h,y)$ for any $h\in H$, 
$\Psi|_{H_c\times Y}$ is a submersion.  
Consider a locally closed subvariety
$$
 \tilde Y = (\operatorname{pr}\times\id_Y)\scirc
            (\Psi|_{H_c\times Y}\times\id_Y)^{-1}(\Delta Y)
$$
of $Y\times Y$,
where 
 $\operatorname{pr}$ 
is the second projection map of $H_c \times Y$ onto $Y$
and
$\Delta Y = \{(y,y)\in Y\times Y: y\in Y\}$.
By definition, 
$(x,y)\in\tilde Y$ if and only if $H_cx=H_cy$.
Since the fiber of the map $\pi:\ \tilde Y\ni(x,y)\mapsto x\in Y$ 
is discrete, there exist a positive number $m$ and a Zariski open subset $Y'$ 
of $Y$ such that $\pi|_{\pi^{-1}(Y')}$ is an $m$-fold 
covering map of $Y'$.
Then we have the lemma by replacing $Y$ by $Y'$.
\end{proof}
\begin{lemma}
\label{lem:7.2}
Suppose we are in the setting of Lemma \ref{lem:7.1}.  
If $H$ and $M$ are real forms of $H_c$ and $X$
 such that $H \cdot M \subset M$, 
 then there exists a locally closed 
submanifold $N$ of $M$ in the usual topology 
 satisfying the following two conditions:  

\noindent
{\rm 1)} $\Psi|_{H\times N}$ is a submersion of $H\times N$ to $M$, 

\noindent
{\rm 2)} $Hx\ne Hy$ for any $x$, $y\in N$ with $x\ne y$.
\end{lemma}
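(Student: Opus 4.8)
The plan is to deduce Lemma~\ref{lem:7.2} from the construction in the proof of Lemma~\ref{lem:7.1}, carried out with a slice chosen compatibly with the real structure. Let $\bar\sigma$ denote the antiholomorphic involution of $X$ defining the real form $M$, which restricts to the involution of $H_c$ defining $H$; by the hypothesis $H\cdot M\subset M$ the map $\Psi$ intertwines $\bar\sigma\times\bar\sigma$ on $H_c\times X$ with $\bar\sigma$ on $X$, so that $d\Psi$ is $\bar\sigma$-equivariant at every $\bar\sigma$-fixed point.

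First I would choose the base point inside $M$. Let $\ell$ be the minimal codimension of an orbit $H_cx$ in $X$; the locus where this minimum is attained is Zariski open and nonempty, and $M$, being a real form, is totally real of maximal dimension, hence Zariski dense in $X$, so I may pick $p_0\in M$ with $\operatorname{codim}_X H_cp_0=\ell$. The tangent space $T_{p_0}(H_cp_0)$ is $\bar\sigma$-stable, so it admits a $\bar\sigma$-stable complex complement $C$ in $T_{p_0}X$ (the complexification of a real complement to $T_{p_0}(H\cdot p_0)$ in $T_{p_0}M$); working in local holomorphic coordinates near $p_0$ in which $\bar\sigma$ is complex conjugation, I take $Y$ to be the $\bar\sigma$-stable locally closed complex submanifold through $p_0$ with $T_{p_0}Y=C$. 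Then $d\Psi|_{H_c\times Y}$ is surjective at $(e,p_0)$, so $Y$ is an admissible starting slice for the argument of Lemma~\ref{lem:7.1}. Each shrinking performed there --- replacing $Y$ by the locus on which $d\Psi|_{H_c\times Y}$ is everywhere submersive, and passing to the Zariski open $Y'\subset Y$ over which $\pi\colon\tilde Y\to Y$ is an $m$-fold covering --- is carried out on a $\bar\sigma$-stable Zariski open subset, so $Y$ and $Y'$ may be kept $\bar\sigma$-stable; and the diagonal $\{(x,x):x\in Y\}\subset\tilde Y$, being a section of $\pi$, occurs as one of the sheets of this covering over $Y'$.

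Now I would produce $N$. Since $Y'$ is Zariski dense open in $Y$ and $Y^{\bar\sigma}=Y\cap M$ is Zariski dense in $Y$, the set $Y'\cap M$ is nonempty; fix $q$ in it. Over a small connected (usual-topology) neighbourhood $B$ of $q$ in $Y'$ the covering $\pi$ trivializes; apart from the diagonal sheet, the remaining sheets are graphs of continuous maps $s_i\colon B\to Y$ with $s_i(q)\ne q$, so after a further shrinking of $B$ I may assume $s_i(B)\cap B=\emptyset$ for all $i$. Put $N:=B\cap M$, a locally closed submanifold of $M$ (in the usual topology) of real dimension $\ell$. For condition (1): at each $(h,y)\in H\times N$ the differential $d\Psi$ is a $\bar\sigma$-equivariant complex-linear surjection onto $T_{hy}X$, so its kernel is $\bar\sigma$-stable, and a dimension count shows that the induced real-linear map on the $\bar\sigma$-fixed subspaces --- which is exactly $d(\Psi|_{H\times N})$ at $(h,y)$ --- is onto $T_{hy}M$; hence $\Psi|_{H\times N}$ is a submersion to $M$. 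For condition (2): if $x,y\in N$ with $Hx=Hy$, then $H_cx=H_cy$ since $H\subset H_c$, so $(x,y)\in\tilde Y$; as $x\in B\subset Y'$, the point $y$ lies on a sheet through $x$, i.e.\ $y=x$ or $y=s_i(x)$ for some $i$, and $s_i(x)\notin B\supset N$ excludes the latter. Thus $Hx\ne Hy$ whenever $x\ne y$ in $N$.

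The main obstacle is condition (2): the finiteness furnished by Lemma~\ref{lem:7.1}(2) does not by itself yield a slice meeting each orbit only once, and the essential point is to exhibit the finitely many orbit-partners of a point as \emph{continuously varying} sheets of a covering, which can then be separated by shrinking. Making this precise forces the preliminary (routine but necessary) step of arranging the slice $Y$ of Lemma~\ref{lem:7.1} to be $\bar\sigma$-stable, so that both its submersivity and its covering structure restrict to $M$; this in turn relies on the Zariski density of the real form $M$ in $X$ to secure a base point of minimal orbit codimension inside $M$.
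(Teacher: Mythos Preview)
Your argument is correct and follows the same overall strategy as the paper: intersect the Zariski slice $Y$ of Lemma~\ref{lem:7.1} with the real form $M$ and then shrink to separate orbit-partners. You are more explicit than the paper about keeping $Y$ (and each subsequent shrinking) $\bar\sigma$-stable so that $Y\cap M$ is a genuine real submanifold of the right dimension and the submersivity passes to real points; the paper writes only ``put $N'=M\cap Y$'' and tacitly assumes this.

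The one genuine difference is the separation step. You trivialize the $m$-fold covering $\pi:\tilde Y\to Y'$ over a small ball $B$ and push the non-diagonal sheets $s_i$ off $B$, so that two points of $N=B\cap M$ on the same $H_c$-orbit (a fortiori the same $H$-orbit) must coincide. The paper instead works directly with $H$-orbits on $N'=M\cap Y$: it sets $n=\sup_{x\in N'}\#\{y\in N':Hx=Hy\}$ (finite by Lemma~\ref{lem:7.1}~2)), chooses $p_1,\dots,p_n\in N'$ on a single $H$-orbit realizing this maximum, separates them by disjoint open sets $U_i\subset N'$, and takes $N=\{p\in U_1:Hp\cap U_i\neq\emptyset\text{ for all }i\}$. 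This is more elementary---it uses only the openness of ``$Hp\cap U_i\neq\emptyset$'' and the finiteness bound, never the covering structure---and it is exactly the mechanism that reappears in Lemma~\ref{lem:7.3}, where no algebraicity (hence no \'etale covering) is available. Your covering argument is cleaner when the algebraic structure is present, but the paper's version is what generalizes.
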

\begin{proof}
Put $N'=M\cap Y$. 
Owing to Lemma~\ref{lem:7.1}~2), 
the cardinality 
\[
     n=\sup_{x\in N'}\#\{y\in N': Hx = Hy\}
\]
 is finite and so
we can find $n$ different points $p_1,\ldots,p_n$ of $N'$
such that $Hp_1=\cdots=Hp_n$.
Let $U_i$ be open neighborhoods of $p_i$ in $N'$ which do not meet each other.
Then the open subset $N = \{p\in U_1: Hp\cap U_i\ne\emptyset\ \text{for }i=1,
\ldots,n\}$ of $N'$ is the required one.
\end{proof}

The following lemma 
 in the non-algebraic setting may also be useful
 for Theorem \ref{thm:3.1}.
\begin{lemma}
\label{lem:7.3}
Let $H$ be a Lie group acts on a manifold $M$.
Suppose there exists a locally closed submanifold $N$ of $M$ such that the map
$\Psi:H\times M\ni(h,x)\mapsto hx\in M$ satisfies\/ {\rm Lemma~\ref{lem:7.2}~1)} and

\noindent
{\rm 2)${}'$} $m_N(x)< \infty$ for any $x\in N$.

\noindent
Here we set $m_N(x) := \# \{y \in N: Hy= Hx\}$.  
Then the conditions\/ {\rm Lemma~\ref{lem:7.2}~1)} and\/ {\rm 2)} are satisfied by 
shrinking $N$ if necessary.
\end{lemma}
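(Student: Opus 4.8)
The plan is to reduce the statement to finding one open subset $N_0\subseteq N$ that meets every $H$-orbit in at most one point. Indeed, for such an $N_0$ the map $\Psi|_{H\times N_0}$ is again a submersion, being the restriction of the submersion $\Psi|_{H\times N}$ to an open subset of its source, so Lemma~\ref{lem:7.2}~1) holds for $N_0$; and ``$N_0$ meets each orbit at most once'' is precisely Lemma~\ref{lem:7.2}~2). So it suffices to produce such an $N_0$.

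First I would unwind the hypotheses. Since $d\Psi_{(h,x)}$ has image $(h)_*\bigl(T_x(Hx)+T_xN\bigr)$, the map $\Psi|_{H\times N}$ is a submersion if and only if $T_x(Hx)+T_xN=T_xM$ for every $x\in N$, equivalently the orbit map $\phi_x\colon H\to M,\ h\mapsto hx$, is transverse to $N$ for every $x\in N$. Hence $Z:=\{(h,x)\in H\times N: hx\in N\}=(\Psi|_{H\times N})^{-1}(N)$ is a locally closed submanifold of $H\times N$ of dimension $\dim H+2\dim N-\dim M$, and the two maps $p,q\colon Z\to N$, $p(h,x)=x$, $q(h,x)=hx$, are submersions: $q$ because it is the restriction of a submersion to the preimage of a submanifold, and $p$ because the involution $(h,x)\mapsto(h^{-1},hx)$ of $Z$ interchanges $p$ and $q$. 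The fibre $p^{-1}(x)$ is $\phi_x^{-1}(N)$, a submanifold of $H$ of dimension $\dim H+\dim N-\dim M$; by hypothesis 2)$'$ it is the disjoint union of the $m_N(x)<\infty$ cosets of the stabiliser $H_x$, so $\dim H_x=\dim H+\dim N-\dim M$ is independent of $x\in N$. Consequently the map $\iota\colon Z\to N\times N,\ (h,x)\mapsto(hx,x)$, has constant rank $\dim N$, and its image is exactly the incidence set $R:=\{(y,x)\in N\times N: Hy=Hx\}$, with the diagonal $\Delta=\{(x,x):x\in N\}$ sitting inside $R$.

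The heart of the argument is then the following: off a proper closed subset of $N$, the set $R$ is, near a point $(x_0,x_0)$ of the diagonal, a $\dim N$-dimensional submanifold of $N\times N$ on which the second projection $\beta\colon R\to N$ restricts to a local diffeomorphism. This is where finiteness 2)$'$ is genuinely used — note that if $N=M$ and $H$ acts transitively then $R=N\times N$ and the conclusion is simply false. Granting it for a suitable $x_0$, the map $x\mapsto(x,x)$ is a continuous section of the local diffeomorphism $\beta$ near $(x_0,x_0)$, so its image, which is $\Delta$, is open in $R$ in a neighbourhood of $(x_0,x_0)$; thus there is an open neighbourhood $N_0$ of $x_0$ in $N$ with $R\cap(N_0\times N_0)=\Delta\cap(N_0\times N_0)$. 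For $x\in N_0$ this says $\{y\in N_0: Hy=Hx\}=\{x\}$, i.e.\ $N_0$ meets every $H$-orbit in at most one point, which is what we wanted.

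The step I expect to be the main obstacle is exactly this last structural claim: ruling out that two distinct sheets of $R$ — the diagonal one and an off-diagonal one — collide along $\Delta$ (as they do, for example, over the fixed locus of a nontrivial element of $H$, where $\Delta$ is not open in $R$). I would control this either by a constant-rank / dimension count applied to the immersed variety $R=\iota(Z)$, the collision locus being the image of the ramification set of $\iota$ and hence of dimension $<\dim N$, so it cannot exhaust the diagonal; or, alternatively, by avoiding genericity altogether: first shrink $N$ around an arbitrary point so that $\sup_x m_N(x)$ becomes finite (using the submersion hypothesis to bound the number of nearby orbit intersections), and then reproduce verbatim the neighbourhood-separation argument from the proof of Lemma~\ref{lem:7.2} — take $n:=\sup_x m_N(x)$ points $p_1,\dots,p_n$ of a single orbit lying in $N$, choose pairwise disjoint open sets $U_i\ni p_i$, and set $N_0:=\{p\in U_1: Hp\cap U_i\neq\emptyset\text{ for }i=1,\dots,n\}$, which is open and meets each orbit exactly once.
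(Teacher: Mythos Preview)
Your alternative approach at the end --- reduce to $\sup_{x\in N} m_N(x)<\infty$ and then rerun the neighbourhood-separation argument from Lemma~\ref{lem:7.2} --- is exactly the paper's proof, \emph{except} for the one step you pass over: obtaining an open piece of $N$ on which $m_N$ is bounded. You write ``shrink $N$ around an arbitrary point so that $\sup_x m_N(x)$ becomes finite (using the submersion hypothesis to bound the number of nearby orbit intersections)'', but the submersion hypothesis gives only \emph{lower} semicontinuity of $m_N$: for open $U\subseteq N$ the set $\Psi(H\times U)\cap N$ is open in $N$, so the sets $U_i:=\{x\in N: m_N(x)>i\}$ are open. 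It does not give a local upper bound, and you have not supplied one. The paper's device at this point is the Baire category theorem: the closed sets $V_i:=N\setminus U_i=\{m_N\le i\}$ cover $N$ by hypothesis~2)$'$, so some $V_m$ has nonempty interior, and one replaces $N$ by that interior. After that your endgame (choose $n=\sup m_N$, pick $p_1,\dots,p_n$ on one orbit, separate by disjoint $U_i$, and set $N_0=\{p\in U_1:Hp\cap U_i\neq\emptyset\text{ for all }i\}$) is verbatim the paper's. So the missing idea is Baire; without it you have not established the boundedness you need, and ``around an arbitrary point'' is an unjustified strengthening.

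On your main geometric route via the incidence set $R=\iota(Z)$: the ``collision of sheets along $\Delta$'' obstacle you identify is exactly the point, but your proposed fix does not work as stated. You have already shown $\iota$ has constant rank $\dim N$, so there is no ramification set to invoke; the difficulty is not where $\iota$ drops rank but where distinct components of the fibres of $p\colon Z\to N$ coalesce --- and that is governed precisely by the (upper) jumps of $m_N$, which is the same missing control as above. So both routes funnel into the same gap, and the clean way to close it is the paper's Baire step.
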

\begin{proof}
Put $U_i = \{x\in N: m_N(x) > i\}$ for $i=1,2,\ldots$.
Then $U_i$ are open subsets of $N$ because $\Psi(H\times U)\cap N$ is open
in $N$ for any open subset $U$ of $N$.
Put $V_i = N\backslash U_i$.
Since $\bigcup_i V_i = N$ by our assumption, Baire's category theorem says
that there exists $V_m$ having an inner point under the induced topology of 
$N$.
Replacing $N$ by the interior of $V_m$ and using the same argument
in the proof of Lemma~\ref{lem:7.2}, we have Lemma~\ref{lem:7.3}.
\end{proof}

\subsection{Proof of Proposition~\protect\ref{prop:4.2}}
\label{subsec:7.2}

We shall prove a uniform estimate of the multiplicity 
 of irreducible representations occurring in a principal series
 representation.

Suppose we are in the setting of Section \ref{sec:4}.
Let $\alpha_1,\ldots,\alpha_n$ be the fundamental system in 
$\Sigma(\mathfrak{j})^+$ 
 and $\omega_1,\cdots,\omega_n$ the corresponding fundamental weights.
By taking a covering group of $G$ if necessary,
 we may assume that
 $G$ is the real form of the simply connected complex Lie group $G_c$
 or its covering group,
 so that the fundamental representation $V_i$ with the highest weight 
 $\omega_i$ lifts to $G$.
For $\lambda\in\mathfrak{j}_c^*$ we put
$$
  \lambda = \sum_{i=1}^n \Lambda_i(\lambda)\omega_i
$$
and define
$$
 \operatorname{Re}\lambda = \sum_{i=1}^n
  (\operatorname{Re}\Lambda_i(\lambda))\omega_i.
$$

We will review the Jantzen--Zuckerman translation principle.
Let $\mathcal{F}_\lambda(\mathfrak{g},K)$ be the category of $({\mathfrak {g}},K)$-modules
 of finite length with a generalized infinitesimal character $\chi_\lambda$.
After conjugation by the Weyl group if necessary,
 we may assume $\lambda$ satisfies
\begin{equation}
\label{eqn:8.1}
  \operatorname{Re}\ang{\lambda}{\alpha}\le0\quad\text{for }
  \alpha\in\Sigma(\mathfrak{j})^+.
\end{equation}
For $V \in \mathcal{F}_\lambda(\mathfrak{g},K)$
 we define $\Phi_\lambda^i(V) :=p_{\lambda+\omega_i}(V \otimes V_i)$.
Here $p_{\lambda+\omega_i}$ is the projection map 
 to the primary component with generalized infinitesimal character 
 $\lambda+ \omega_i$.
Then $\Phi_\lambda^i$ is an exact functor
 from $\mathcal{F}_\lambda(\mathfrak{g},K)$ to $\mathcal{F}_{\lambda+\omega_i}(\mathfrak{g},K)$.
Similarly,
 we define a functor
 from $\mathcal{F}_{\lambda+\omega_i}(\mathfrak{g},K)$ 
 to $\mathcal{F}_{\lambda}(\mathfrak{g},K)$ by
 $\Psi_\lambda^i(W) := p_{\lambda}(W \otimes V_i^*)$,
 where $V_i^*$ is the contragredient representation of $V_i$.

Let $(\zeta, V_{\zeta}) \in \Lrep$.  
We write $d \zeta \in {\mathfrak {j}}_c^{\ast}$
 for the highest weight 
 with respect to $\Sigma ({\mathfrak {t}})^+$, 
 and take $w_o \in W(\mathfrak{j})$ such that
 $\lambda :=w_o d\zeta$ satisfies \eqref{eqn:8.1}.
Assume $\operatorname{Re}\Lambda_i(\lambda) < -1$ for some $i$.
This assumption assures that $\lambda$ and $\lambda + \omega_i$ 
 are equisingular,
 namely,
 $\ang{\lambda}{\alpha} = 0 \Leftrightarrow \ang{\lambda+\omega_i}{\alpha} =0$
 for $\alpha \in \Sigma({\mathfrak{j}})$.
Then we have an isomorphism of $(\mathfrak{g},K)$-modules:
\begin{equation}
\label{eqn:8.2}
  \Psi_\lambda^i(I_P^G(\zeta'))\simeq \ipgz
  \;
  \text{ and }
  \;
  I_P^G(\zeta')\simeq \Phi_\lambda^i (\ipgz).
\end{equation}
Here $(\zeta',V_{\zeta'}) \in\Lrep$ is the unique representation such that
 $V_{\zeta',P}$ occurs
 as a subquotient of $V_{\zeta,P}\otimes V_i$ and satisfies
 $w_o d\zeta'=\lambda+\omega_i$.
Thanks to \cite[Theorem 7.232]{KV},
  $\Phi_\lambda^i$ induces an equivalence of categories
 between $\mathcal{F}_\lambda(\mathfrak{g},K)$ and 
 $\mathcal{F}_{\lambda+\omega_i}(\mathfrak{g},K)$.
In particular, 
 $\Phi_\lambda^i$ sends (non-zero) irreducible $(\mathfrak{g},K)$-modules 
 to (non-zero) irreducible $(\mathfrak{g},K)$-modules
 and
 we have
\begin{alignat*}{1}
   \Hom_{(\mathfrak{g},K)}(\pi, \ipgz)
   &\simeq
   \Hom_{(\mathfrak{g},K)}(\Phi_\lambda^i(\pi), \Phi_\lambda^i (\ipgz))
\\
   &\simeq
   \Hom_{(\mathfrak{g},K)}(\Phi_\lambda^i(\pi), I_P^G(\zeta'))
\end{alignat*}
 for any $(\pi,V) \in \mathcal{F}_\lambda(\mathfrak{g},K)$.
Here we use \eqref{eqn:8.2} for the second equality.
Hence applying $\Phi_\lambda^i$ successively, we may assume 
$$
   |\operatorname{Re} d\zeta| \le C
$$
 in order to prove Proposition~\ref{prop:4.2},
 where $| \cdot |$ is the norm induced from the Killing form
 and $C := |\sum_{i=1}^n \omega_i|$.

Now we recall Vogan's results on minimal $K$-type theory.
We take a Cartan subalgebra $\tilde{\mathfrak{t}}$ of $\mathfrak{k}$
 and fix a positive system $\Delta^+(\mathfrak{k}_c, \tilde{\mathfrak{t}}_c)$.
We write $\delta_K \in \sqrt{-1}\tilde{\mathfrak{t}}^*$ for half the sum of
 elements in $\Delta^+(\mathfrak{k}_c, \tilde{\mathfrak{t}}_c)$.
If $\mu \in \tilde{\mathfrak{t}}_c^*$ is the highest weight of a $K$-type $\tau$
 we define $\Vert \tau \Vert := |\mu + 2 \delta_K|$,
 where $| \cdot |$ denotes the norm in $\sqrt{-1} \tilde{\mathfrak{t}}^*$
 induced from the Killing form.
A minimal $K$-type of the $(\mathfrak{g},K)$-module $(\pi,V)$ 
 is a $K$-type $\tau$ for which $|\tau|$ is minimal
  among all $K$-types occurring in $\pi$.
It follows from \cite[Theorem~10.26]{KV}  
 that
 there exists a constant $C'$ depending only on $\mathfrak{g}$ 
 with the following property:
 if $\pi$ is a $(\mathfrak{g},K)$-module with infinitesimal
 character $\lambda$, then
$$
  |\operatorname{Re} (\lambda)| \ge \Vert \tau \Vert -C'.
$$
Let $N$ be the maximal dimension of $\tau \in \widehat{K}$
 among all $K$-types $\tau$ with $\Vert \tau \Vert \le C + C'$.
We remark that $N$ depends only on the Lie algebra $\mathfrak{g}$.
For $\pi \in \widehat{G}_{\operatorname {ad}}$,
 let $\tau$ be one of its minimal $K$-types.
Because $\tau$ occurs in $\pi$ with multiplicity one,
 we have
$$
  [\pi: \ipgz]
  \le \dim \Hom_K (\tau, \ipgz).
$$
Then the right-hand side equals
 $\dim \Hom_M (\tau_{|M}, \zeta_{|M})$
 by the Frobenius reciprocity theorem.
Since
 $\dim \Hom_M (\tau_{|M}, \zeta_{|M}) \le N$, 
 we have proved Proposition~\ref{prop:4.2}.
\hfill\qed

\subsection*{Acknowledgments}

Parts of the results and the idea of the proof were delivered
 in various occasions including the workshop at Sandbjerg Gods' in Denmark in 1991 organized by N. V. Pedersen, 
Summer School on Number Theory in Nagano (Japan) in 1995 organized by F. Sato,
in Distinguished Sackler Lectues organized by J. Bernstein at Tel Aviv University (Israel) in May 2007, in the conference in honor of E. B. Vinberg's 70th birthday in Bielefeld (Germany) in July 2007 organized by H. Abels, V. Chernousov, G. Margulis, D. Poguntke, and K. Tent, in the 41th `Seminar Sophus Lie' in Erlangen (Germany) in July 2011 organized by K.-H. Neeb,
in `Analysis of Lie Group' at Max Planck Institute for Mathematics
 in Bonn (Germany) organized by B. Kr\"otz and H. Schlichtkrull in 2011, 
 and in `Lie Groups, Lie Algebras and their Representations'
 in November 2011 in Berkeley (USA) organized by J. Wolf.
 The authors are grateful to these organizers for warm hospitality
and to participants for their feedbacks.

The authors were partially supported
 by Mittag-Leffler institute (Djursholm), 
 Grant-in-Aid for Scientific Research
 (B) (22340026)  and (A)(20244008).

\tolerance=2000

\textsc{\small
	Toshiyuki Kobayashi,	Kavli IPMU (WPI) and Graduate School of Mathematical Sciences, the University of Tokyo, 
	Komaba, Tokyo 153-8914, Japan.
\\[\medskipamount]\indent
	Toshio Oshima,	Graduate School of Mathematical Sciences,
	the University of Tokyo,	Komaba, Tokyo 153-8914, Japan.  }

\end{document}